\documentclass{amsart}
\usepackage{amsmath,amssymb,amsthm}
\usepackage{algorithm,algpseudocode}
\usepackage{graphicx}
\usepackage[colorlinks=true, allcolors=blue]{hyperref}
\usepackage{mathrsfs}
\usepackage{bm}

\usepackage{color}

\newcommand{\reviewfigure}[2][]{%
  \IfFileExists{#2}{\includegraphics[#1]{#2}}{%
    \fbox{\parbox[c][3.6cm][c]{0.84\textwidth}{\centering
      Original figure not included in this review copy.\\[0.6em]
      \texttt{\detokenize{#2}}}}}}
\usepackage{booktabs}
\usepackage{float}
\usepackage{placeins}
\theoremstyle{plain}
\newtheorem{theorem}{Theorem}[section]
\newtheorem{lemma}[theorem]{Lemma}
\newtheorem{proposition}[theorem]{Proposition}

\theoremstyle{definition}

\newcommand{\RR}{\mathbb{R}}

\newcommand{\dd}{\mathrm{d}}
\newcommand{\compactsum}{\mathop{\textstyle\sum}\limits}

\newcommand{\V}{\mathcal V}
\newcommand{\Vp}{\mathcal V'}
\newcommand{\E}{\mathbb E}
\newcommand{\Pp}{\mathbb P}
\newcommand{\F}{\mathcal F}
\newcommand{\B}{\mathcal B}
\newcommand{\cE}{\mathcal E}
\newcommand{\cM}{\mathcal M}

\DeclareMathOperator{\dist}{dist}
\DeclareMathOperator{\spanop}{span}
\DeclareMathOperator{\tr}{tr}
\DeclareMathOperator{\rank}{rank}

\title[Convergence of Variable-Separation-Based Model Reduction]{
Convergence Analysis of Variable-Separation-Based Model Reduction for Linear Parametric Partial Differential Equations}\thanks{
All authors contributed equally to this work.
This work was supported partly by the National Natural Science Foundation of China (Nos. 12671361, 12471405) and the Science and Technology Innovation Program of Hunan Province (No. 2025RC3080).
}

\author{Liang Chen}
\address{School of Mathematics, Hunan University, Changsha 410082, China.}
\email{chl@hnu.edu.cn}

\author{Qiuqi Li}
\address{School of Mathematics, Hunan University, Changsha 410082, China.}
\email{qli28@hnu.edu.cn}

\author{Yusong Ye}
\address{School of Mathematics, Hunan University, Changsha 410082, China.}
\email{yeyusong@hnu.edu.cn}

\author{Pingwen Zhang}
\address{Institute for Math and AI, Wuhan University, Wuhan 430072, and School of Mathematical Sciences, Peking University, Beijing 100871, China.}
\email{pzhang@pku.edu.cn}

\subjclass[2020]{ 35R60, 65N12, 65J10, 41A65}
\keywords{parametric partial differential equations, model order reduction, variable-separation method, random parameter sampling, convergence analysis}

\date{\today}

\dedicatory{}

\begin{document}

\begin{abstract}
This paper analyzes the convergence of a variable-separation (VS) model reduction method that successively constructs separated approximations to the solutions of linear parametric partial differential equations (PDEs) by solving residual problems at sampled parameter values.
We prove that the resulting VS approximations converge almost surely to the exact parametric solution in the parameter-averaged energy norm, under finite-dimensional approximation conditions on the error family and the assumptions of symmetry, uniform coercivity, and uniform boundedness of the governing bilinear forms.
For parametric PDEs whose operators and right-hand sides admit finite-term separated representations, we construct finite-dimensional approximation spaces for the error family and bound their dimensions and approximation errors, thereby verifying these approximation conditions and obtaining an $O((\log N)^{-1})$ upper bound for the expected parameter-averaged error energy after $N$ iterations.
When the solution families are contained in a fixed finite-dimensional space, we obtain the sharper $O(N^{-1})$ bound for the expected error energy and, if the operator is also parameter-independent, exact recovery for almost every parameter after an almost surely finite number of iterations.
Numerical experiments illustrate the error decay of the VS approximations, as well as finite-step recovery in the parameter-independent case.   
\end{abstract}
\maketitle

\section{Introduction}
Parametric partial differential equations (PDEs) arise naturally when material coefficients, geometric configurations, or uncertain inputs are treated as parameters in a mathematical model \cite{CohenDeVore2015,Ghanem2003}.
Such PDE models are indispensable in various engineering disciplines, including structural mechanics, computational fluid dynamics, and electromagnetics \cite{Hesthaven2016}.
In many-query contexts, such as design optimization, inverse problems, and real-time simulation, the high-fidelity model must be evaluated for many parameter instances \cite{Prudhomme2002}.
To mitigate this computational burden, model order reduction (MOR) has been developed to exploit the low-dimensional manifold structure often present in parametric solution sets \cite{Quarteroni2016}.
By replacing the full-order model with a reduced representation, MOR aims to capture the dominant solution behavior while substantially lowering the computational cost \cite{Benner2015}.
To describe the solution family to be approximated, let \(D\subset\RR^d\) be the spatial domain and \(\Omega\subset\RR^m\) the parameter domain, with \(\V\) a suitable Hilbert space for solutions of the underlying PDE.
For each \(\xi\in\Omega\), write \(u(\xi)\in\V\) for the corresponding solution on \(D\), so that the family targeted by MOR is the solution manifold $\cM:=\{u(\xi):\xi\in\Omega\}\subset\V$.
An accurate low-dimensional or separated representation of this manifold allows repeated high-fidelity simulations to be replaced, at least in the online stage, by computations involving only reduced representations.

A widely used class of MOR methods constructs reduced spaces from
high-fidelity solutions and projects the governing equations onto these
spaces.
Reduced basis (RB) methods belong to this class, with early applications
to incompressible viscous flow \cite{Peterson1989}.
The reduced space can be constructed by proper orthogonal decomposition
(POD) \cite{Quarteroni2016}.
This decomposition extracts dominant modes from ensembles of
high-fidelity snapshots, usually through singular value decomposition
\cite{Sirovich1987,BerkoozHolmesLumley1993}.
Alternatively, greedy algorithms select parameter values by maximizing
an error estimator and enrich the reduced space with the corresponding
high-fidelity solutions \cite{Rozza2008}.
Approximations in a reduced spatial space can be written in the
separated form
\[
\begin{array}{ll}
u_N(\xi)
=
\compactsum_{i=1}^{N}\eta_i(\xi)g_i
\qquad
\forall\,\xi\in\Omega,
\end{array}
\]
where \(g_i\in\V\) is a spatial function and \(\eta_i:\Omega\to\RR\) is a parameter-dependent coefficient function.
Proper generalized decomposition (PGD) \cite{Nouy2007,Nouy2010} constructs approximations of this form through successive enrichment, typically determining the new spatial and parameter-dependent factors by alternating or coupled iterative updates.
The variable-separation (VS) method \cite{LiJiang2017} seeks the same separated form through a different enrichment mechanism, in which a residual problem is first solved at a selected parameter to obtain a new spatial direction, and the associated parameter-dependent coefficient function is then determined explicitly.
Introduced for linear stochastic PDEs in \cite{LiJiang2017}, the VS method has since been extended to stochastic saddle-point problems \cite{JiangLi2018} and nonlinear PDEs with random inputs \cite{LiZhang2020}.
Further extensions include stochastic domain decomposition \cite{Chen2024}, parameter-dependent dynamical systems \cite{ChenZhou2025}, and frequency-domain formulations for time-domain Maxwell's equations \cite{BaLi2026}.

For Galerkin approximations in POD spaces, error estimates for linear
and certain nonlinear parabolic problems account for both basis
truncation and time discretization \cite{KunischVolkwein2001}.
The analysis was later extended to nonlinear parabolic systems arising
in fluid dynamics, with estimates that account for the snapshot
locations \cite{KunischVolkwein2002}.
For semilinear parabolic optimal control problems, a posteriori bounds
were also obtained for the distance between locally optimal controls
of the reduced and original problems
\cite{KammannTroltzschVolkwein2013}.
For RB methods, error bounds for the reduced approximation and convergence of the space-enrichment procedure play distinct roles.
A priori convergence results were obtained for single-parameter symmetric coercive elliptic problems \cite{MadayPateraTurinici2002}, while residual-based a posteriori error bounds were developed for parabolic problems with finite-term separable parameter dependence \cite{GreplPatera2005} and for the parametrized steady incompressible Navier--Stokes equations \cite{VeroyPatera2005}.
A comprehensive certification framework for affinely parametrized coercive elliptic equations was presented in \cite{Rozza2008}.
For greedy space enrichment, convergence was established in \cite{Buffa2012}, the rates were linked to the Kolmogorov \(n\)-widths of the solution manifold in \cite{Binev2011}, and the theory was extended from Hilbert spaces to Banach spaces in \cite{DeVore2013}.
For PGD, the separated approximation itself is studied through tensor approximation and functional-analytic formulations.
Within this approach, a functional version of the Eckart--Young principle characterizes separated approximations and establishes convergence properties for abstract elliptic problems \cite{FalcoNouy2011}, with the analysis subsequently extended to nonlinear convex problems in tensor Banach spaces \cite{FalcoNouy2012}.
A posteriori error estimation and adaptive enrichment strategies also address the accuracy of PGD approximations for parametrized linear parabolic equations \cite{Chamoin2017}.

A difficulty in analyzing the VS iteration is that its error need not
be orthogonal to the entire generated space in the energy inner product.
For symmetric coercive problems, Galerkin reduced approximations are
energy-orthogonal projections onto their reduced spaces and therefore
minimize the energy-norm error over those spaces \cite{Hesthaven2016}.
In contrast, the VS iteration adds each new separated term without
modifying the previously computed terms.
Although each update makes the error orthogonal to the new direction,
subsequent updates need not preserve orthogonality to earlier directions
when the bilinear form depends on the parameter.
Consequently, best-approximation estimates for the generated space
alone do not establish convergence of the VS approximations.
Since each new direction is obtained from a residual problem at a
single sampled parameter, we use cross-parameter correlations to
estimate the conditional expected energy decrease and derive
sufficient conditions for almost-sure convergence under independent
random parameter sampling.
For each VS update, we derive an exact identity for the change in the parameter-averaged error energy and show that this energy cannot increase.
A nonincreasing, nonnegative error energy may still converge to a positive limit, so we estimate its conditional expected decrease through cross-parameter correlations of the current error family.
To estimate these correlations, we project the current error family onto a finite-dimensional space, apply an average-correlation estimate to the projected errors, and bound the terms containing the approximation remainder.
In this way, we obtain a quantitative lower bound for the conditional expected decrease whenever the error energy exceeds a threshold determined by the approximation dimension and error.
We then sum the expected decreases to prove that the parameter-averaged error energy converges to zero almost surely with respect to the sampling history, provided that the evolving error family admits finite-dimensional approximations whose dimensions and errors satisfy suitable quantitative conditions.
Under further bounds on those dimensions and errors, we also obtain an inverse-logarithmic upper bound for the expected parameter-averaged error energy.
For problems with finite-term parameter-separable representations of the bilinear form and the right-hand side, we verify the abstract conditions by constructing the required approximation spaces from local inverse-operator expansions.
When the solution family is contained in a fixed finite-dimensional space, the VS errors remain in that space, so we can apply the correlation estimate directly without an approximation remainder.
In this case, the parameter-averaged error energy in \eqref{eq:error-energy-definitions} satisfies an $O(N^{-1})$ bound in expectation over the random sampling history by Theorem~\ref{thm:finite-dimensional}.
For a parameter-independent bilinear form, the iteration has an orthogonal structure in which the VS approximation is exactly the orthogonal projection of the solution onto the generated direction space in the common energy inner product, and hence satisfies a best-approximation property.
Numerical experiments illustrate these theoretical results through parameter-dependent and parameter-independent examples.

After introducing the parametric problem and the VS method in Section~\ref{sec:problem-setting}, we establish the general convergence theorem and the error bound for finite-dimensional solution families in Section~\ref{sec:convergence}.
Section~\ref{sec:finite-term} then verifies the convergence conditions for problems with separable parameter dependence, while Section~\ref{sec:fixed-operator} examines the additional projection and finite-step recovery properties available for parameter-independent operators.
Numerical experiments illustrating these results are presented in Section~\ref{sec:numerics}, followed by the concluding discussion in Section~\ref{sec:conclusion}.

\section{Problem formulation and the VS method}
\label{sec:problem-setting}

\subsection{Parametric problem and assumptions}

Consider a family of variational problems indexed by a parameter in a compact metric space $(\Omega,d_\Omega)$, equipped with its Borel $\sigma$-algebra $\mathcal B$ and a Borel probability measure $\mu$ on $(\Omega,\mathcal B)$.
The solutions are sought in a real separable Hilbert space $(\mathcal V,(\cdot,\cdot)_{\mathcal V})$ with norm $\|\cdot\|_{\mathcal V}$, and the right-hand sides are represented as elements of its dual space $\mathcal V'$.
Write $\langle\cdot,\cdot\rangle$ for the pairing between $\mathcal V'$ and $\mathcal V$, and define the dual norm of $\ell\in\mathcal V'$ by
\[
\begin{array}{ll}
\|\ell\|_{\mathcal V'}
:=
\sup_{0\ne v\in\mathcal V}
\frac{|\langle \ell,v\rangle|}{\|v\|_{\mathcal V}}.
\end{array}
\]
For each $\xi\in\Omega$, consider the variational problem of finding $u(\xi)\in\mathcal V$ such that
\begin{equation}
 a(u(\xi),v;\xi)=\langle f(\xi),v\rangle
 \qquad \forall\,v\in\mathcal V,
 \label{eq:pvp}
\end{equation}
where $a(\cdot,\cdot;\xi)$ is a bilinear form and $f(\xi)\in\mathcal V'$ is a linear functional.
Assume that the bilinear form is symmetric, so that
\begin{equation}
a(v,w;\xi)=a(w,v;\xi)
\qquad
\forall\,\xi\in\Omega,\ \forall\,v,w\in\mathcal V.
\label{eq:symmetric}
\end{equation}
In addition, its coercivity and boundedness are assumed to hold uniformly in the parameter, with constants $\alpha>0$ and $M>0$ such that
\begin{align}
 a(v,v;\xi)&\ge \alpha\|v\|_{\mathcal V}^{2}
 &&\forall\,\xi\in\Omega,\ \forall\,v\in\mathcal V,
\label{eq:coercive}
\\
 |a(v,w;\xi)|&\le M\|v\|_{\mathcal V}\|w\|_{\mathcal V}
 &&\forall\,\xi\in\Omega,\ \forall\,v,w\in\mathcal V.
 \label{eq:bounded}
\end{align}
To specify the parameter dependence, further assume that, for every fixed $v,w\in\mathcal V$, the map $\xi\mapsto a(v,w;\xi)$ is Borel measurable.
The right-hand-side map $f:\Omega\to\mathcal V'$ is likewise assumed to be Borel measurable with respect to the norm topology on $\mathcal V'$ and to satisfy the uniform bound
\begin{equation}
 F:=\sup_{\xi\in\Omega}\|f(\xi)\|_{\mathcal V'}<\infty.
 \label{eq:fbound}
\end{equation}
The assumptions on the parameter space, the Hilbert space, the bilinear form, and the right-hand side stated above remain in force throughout the paper.
By the Lax--Milgram theorem, problem \eqref{eq:pvp} has a unique solution $u(\xi)\in\mathcal V$ for every $\xi\in\Omega$.
To bound its norm uniformly in $\xi$, take $v=u(\xi)$ in \eqref{eq:pvp}, so that
\[
\begin{array}{ll}
\alpha\|u(\xi)\|_{\mathcal V}^2
\le
a(u(\xi),u(\xi);\xi)
=
\langle f(\xi),u(\xi)\rangle
\le
F\|u(\xi)\|_{\mathcal V}.
\end{array}
\]
Here the lower bound is \eqref{eq:coercive}, while the upper bound uses $|\langle f(\xi),u(\xi)\rangle|\le\|f(\xi)\|_{\mathcal V'}\|u(\xi)\|_{\mathcal V}$ and \eqref{eq:fbound}.
For $u(\xi)\ne0$, division by $\|u(\xi)\|_{\mathcal V}$ proves the uniform bound
\begin{equation}
\|u(\xi)\|_{\mathcal V}
\le
\frac{F}{\alpha}
\qquad
\forall\,\xi\in\Omega.
\label{eq:solution-uniform-bound}
\end{equation}
The bound \eqref{eq:solution-uniform-bound} also holds when $u(\xi)=0$.

For each $\xi\in\Omega$, the bilinear form $a(\cdot,\cdot;\xi)$ is symmetric by \eqref{eq:symmetric} and positive definite because $a(v,v;\xi)\ge\alpha\|v\|_{\mathcal V}^2>0$ for $v\ne0$ by \eqref{eq:coercive}.
It therefore defines an energy inner product on $\mathcal V$, with associated norm $\|v\|_{a,\xi}:=a(v,v;\xi)^{1/2}$.
The lower bound \eqref{eq:coercive} and upper bound \eqref{eq:bounded} on $a(v,v;\xi)$ make this norm uniformly equivalent to $\|\cdot\|_{\mathcal V}$, with
\[
\begin{array}{ll}
 \sqrt{\alpha}\,\|v\|_{\mathcal V}
 \le \|v\|_{a,\xi}
 \le \sqrt{M}\,\|v\|_{\mathcal V}
 \qquad
 \forall\,\xi\in\Omega,\ \forall\,v\in\mathcal V.
\end{array}
\]

The setting of \eqref{eq:pvp} includes problems in which the bilinear form
and the right-hand side admit finite-term separable representations.
This subclass is considered in Section~\ref{sec:finite-term}.

\subsection{VS iteration with independent parameter sampling}

The VS method constructs the approximation by sampling the parameter space independently according to the probability measure $\mu$ introduced in the preceding subsection.
At each sampled parameter, it solves the residual problem associated with the current approximation to obtain a spatial direction, whose contribution across the parameter domain is then determined by a coefficient function.
The construction is detailed in Algorithm~\ref{alg:VS}, where empty sums are understood to be zero.

\begin{algorithm}%
\caption{VS algorithm for parametric variational problem \eqref{eq:pvp}}
\label{alg:VS}
\begin{algorithmic}[1]
\Require the bilinear form \(a(\cdot,\cdot;\xi)\), the right-hand side \(f:\Omega\to\Vp\), and the parameter probability space \((\Omega,\mathcal B,\mu)\).

\Ensure the spatial directions \(\{g_k\}\) and the parameter-dependent coefficient functions \(\{\eta_k\}\).

\For{\(k=1,2,\dots\)}

\State
Draw \(\zeta_k\in\Omega\) independently according to \(\mu\).

\State
{\small
Find \(g_k\in\mathcal V\) such that $ a(g_k,v;\zeta_k) = \langle f(\zeta_k),v\rangle - \compactsum_{s=1}^{k-1}\eta_s(\zeta_k)a(g_s,v;\zeta_k), \ \forall\,v\in\mathcal V. $
}

\If{\(g_k=0\)}
\State Set $ \eta_k(\xi):=0 \quad \forall\,\xi\in\Omega. $
\Else
\State Set $ \eta_k(\xi) := \frac{ \langle f(\xi),g_k\rangle - \compactsum_{s=1}^{k-1}\eta_s(\xi)a(g_s,g_k;\xi) }{ a(g_k,g_k;\xi) } \quad \forall\,\xi\in\Omega. $
\EndIf

\EndFor
\end{algorithmic}
\end{algorithm}

In Algorithm \ref{alg:VS}, the approximation generated after $N$ iterations is
\begin{equation}
u_N(\xi)
=
\compactsum_{i=1}^{N}\eta_i(\xi)g_i
\qquad
\forall\,\xi\in\Omega,
\label{eq:approximate-solution}
\end{equation}
with $u_0(\xi):=0$.
The subspace spanned by $g_1,\ldots,g_N$ contains $u_N(\xi)$ for every $\xi\in\Omega$ and is denoted by
\begin{equation}
G_N:=\operatorname{span}\{g_1,\ldots,g_N\}
\quad\mbox{and}\quad G_0:=\{0\}.
\label{eq:generated-direction-space}
\end{equation}
Since each iteration adds at most one vector to $G_N$ and a zero direction adds no dimension, we have $\dim G_N\le N$.

To analyze the approximation $u_N$ in \eqref{eq:approximate-solution}, define its error at step $N$ by
\begin{equation}
R^{(N)}(\xi):=u(\xi)-u_N(\xi).
\label{eq:error-definition}
\end{equation}
The defect of $u_N(\xi)$ in the variational equation \eqref{eq:pvp} is represented by the residual functional
\begin{equation*}
\begin{array}{ll}
\mathfrak r_N(v;\xi)
:=
\langle f(\xi),v\rangle-a(u_N(\xi),v;\xi)
\qquad
\forall\,v\in\mathcal V.
\end{array}
\end{equation*}
Since $u(\xi)$ solves \eqref{eq:pvp} and $R^{(N)}(\xi)=u(\xi)-u_N(\xi)$ by \eqref{eq:error-definition}, the residual functional represents the error through
\begin{equation}
a(R^{(N)}(\xi),v;\xi)
=
\mathfrak r_N(v;\xi)
\qquad
\forall\,v\in\mathcal V.
\label{eq:error-equation}
\end{equation}
The error $R^{(N)}(\xi)$ is used only in the analysis, because Algorithm~\ref{alg:VS} computes the residual functional $\mathfrak r_N(\cdot;\xi)$ directly from $u_N(\xi)$ without requiring the exact solution $u(\xi)$.
Throughout this paper, ``error'' refers to $R^{(N)}$, whereas ``residual'' refers to the functional $\mathfrak r_N$ or to an algebraic residual in a discrete computation.

The following lemma expresses the VS updates in terms of $R^{(k-1)}$ and identifies $\eta_k(\xi)g_k$ as the orthogonal projection of $R^{(k-1)}(\xi)$ onto $\operatorname{span}\{g_k\}$ in $a(\cdot,\cdot;\xi)$.

\begin{lemma}
\label{lem:basic-error-relations}
Along any parameter sequence $\{\zeta_k\}_{k\ge1}\subset\Omega$, the direction generated by  Algorithm~\ref{alg:VS} satisfies, for every $k\ge1$,
\begin{equation}
g_k=R^{(k-1)}(\zeta_k).
\label{eq:direction-as-error}
\end{equation}
If $g_k\ne0$, then $a(g_k,g_k;\xi)>0$ for every $\xi\in\Omega$ and
\begin{equation}
\eta_k(\xi)
=
\frac{a(R^{(k-1)}(\xi),g_k;\xi)}
{a(g_k,g_k;\xi)}.
\label{eq:eta-residual}
\end{equation}
In all cases, the error $R^{(k)}$ in \eqref{eq:error-definition} satisfies, for every $\xi\in\Omega$,
\begin{equation}
R^{(k)}(\xi)
=
R^{(k-1)}(\xi)-\eta_k(\xi)g_k,
\label{eq:error-update}
\end{equation}
and
\begin{equation}
a(R^{(k)}(\xi),g_k;\xi)=0
\qquad
\forall\,\xi\in\Omega.
\label{eq:one-step-orthogonality}
\end{equation}
Moreover, the correction $\eta_k(\xi)g_k$ is the orthogonal projection of $R^{(k-1)}(\xi)$ onto $\operatorname{span}\{g_k\}$ in the inner product $a(\cdot,\cdot;\xi)$. 
\end{lemma}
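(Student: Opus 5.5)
The plan is to fix an arbitrary parameter sequence and rewrite each step of Algorithm~\ref{alg:VS} in terms of the error, using only the error equation \eqref{eq:error-equation}. All assertions are pointwise in $\xi$ and follow directly from the definitions, so no induction beyond the trivial bookkeeping $u_k=u_{k-1}+\eta_k g_k$ is needed.

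\textbf{The direction identity \eqref{eq:direction-as-error}.} By \eqref{eq:approximate-solution}, the right-hand side of the residual problem in step~3 equals $\langle f(\zeta_k),v\rangle-a(u_{k-1}(\zeta_k),v;\zeta_k)=\mathfrak r_{k-1}(v;\zeta_k)$. This holds also for $k=1$ with $u_0=0$. By \eqref{eq:error-equation}, this in turn equals $a(R^{(k-1)}(\zeta_k),v;\zeta_k)$ for all $v$. Coercivity \eqref{eq:coercive} gives uniqueness of the Lax--Milgram solution, so $g_k=R^{(k-1)}(\zeta_k)$.

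\textbf{The coefficient formula \eqref{eq:eta-residual}.} If $g_k\neq0$, then $a(g_k,g_k;\xi)\ge\alpha\|g_k\|_{\mathcal V}^2>0$ for every $\xi$, so $\eta_k$ is well defined. Its numerator is $\mathfrak r_{k-1}(g_k;\xi)$, which by \eqref{eq:error-equation} equals $a(R^{(k-1)}(\xi),g_k;\xi)$.

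\textbf{The error update \eqref{eq:error-update}.} This is immediate from $u_k=u_{k-1}+\eta_k g_k$ and \eqref{eq:error-definition}, and holds in both cases. When $g_k=0$ the correction vanishes regardless of $\eta_k$.

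\textbf{The orthogonality \eqref{eq:one-step-orthogonality}.} If $g_k=0$ it is trivial. Otherwise
\[
a(R^{(k)}(\xi),g_k;\xi)=a(R^{(k-1)}(\xi),g_k;\xi)-\eta_k(\xi)a(g_k,g_k;\xi)=0,
\]
by \eqref{eq:eta-residual}. This uses only linearity in the first argument; symmetry \eqref{eq:symmetric} is what makes $a(\cdot,\cdot;\xi)$ an inner product, so the orthogonality is meaningful in both arguments.

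\textbf{The projection property.} In the energy inner product $a(\cdot,\cdot;\xi)$, the orthogonal projection of $w$ onto $\operatorname{span}\{g\}$ with $g\neq0$ is $\frac{a(w,g;\xi)}{a(g,g;\xi)}\,g$. By \eqref{eq:eta-residual}, this is exactly $\eta_k(\xi)g_k$. Equivalently, it is the unique element $c\,g_k$ such that $R^{(k-1)}(\xi)-c\,g_k\perp g_k$, which is \eqref{eq:one-step-orthogonality}. When $g_k=0$, the span is $\{0\}$ and the projection is $0=\eta_k g_k$.

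\textbf{Main obstacle.} There is no real obstacle. The only points requiring care are the degenerate case $g_k=0$, which must be handled separately throughout, and the observation that the step-3 right-hand side is precisely the residual functional $\mathfrak r_{k-1}(\cdot;\zeta_k)$, with the empty sum at $k=1$ consistent with $u_0=0$.
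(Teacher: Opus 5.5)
Your proposal is correct and follows essentially the same route as the paper: you identify the step-3 right-hand side with the residual functional $\mathfrak r_{k-1}(\cdot;\zeta_k)$ and use the error equation \eqref{eq:error-equation} with uniqueness under coercivity to get $g_k=R^{(k-1)}(\zeta_k)$. You then rewrite the numerator of $\eta_k$ in the same way, read off the orthogonality and projection properties, and treat the degenerate case $g_k=0$ separately, just as the paper does.
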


\begin{proof}
Since $u_{k-1}(\xi)=\compactsum_{s=1}^{k-1}\eta_s(\xi)g_s$ in \eqref{eq:approximate-solution}, the direction equation in Algorithm~\ref{alg:VS} is $a(g_k,v;\zeta_k)=\mathfrak r_{k-1}(v;\zeta_k)$ for every $v\in\mathcal V$.
The error $R^{(k-1)}(\zeta_k)$ solves this equation by \eqref{eq:error-equation}, and the solution is unique under \eqref{eq:coercive}, so $g_k=R^{(k-1)}(\zeta_k)$ as asserted in \eqref{eq:direction-as-error}.

The correction $\eta_k(\xi)g_k$ changes the error according to \eqref{eq:error-update}, since $u_k(\xi)=u_{k-1}(\xi)+\eta_k(\xi)g_k$ in \eqref{eq:approximate-solution} and $R^{(k)}(\xi)=u(\xi)-u_k(\xi)$ by \eqref{eq:error-definition}.
For $g_k\ne0$, the denominator in Algorithm~\ref{alg:VS} satisfies $a(g_k,g_k;\xi)\ge\alpha\|g_k\|_{\mathcal V}^2>0$ by \eqref{eq:coercive}, and the numerator is $\mathfrak r_{k-1}(g_k;\xi)=a(R^{(k-1)}(\xi),g_k;\xi)$ by \eqref{eq:error-equation}.
Thus $\eta_k(\xi)$ has the form \eqref{eq:eta-residual}, for which the updated error is orthogonal to $g_k$ as in \eqref{eq:one-step-orthogonality}.
If $g_k=0$, then $\eta_k\equiv0$ in Algorithm~\ref{alg:VS}, so $u_k=u_{k-1}$ and $R^{(k)}=R^{(k-1)}$, and the orthogonality relation still holds.
In both cases, $R^{(k-1)}(\xi)=\eta_k(\xi)g_k+R^{(k)}(\xi)$ in \eqref{eq:error-update} is therefore the orthogonal decomposition relative to $\operatorname{span}\{g_k\}$, which proves the projection assertion.
\end{proof}

\section{Convergence analysis}
\label{sec:convergence}
We study convergence through the $\mu$-average over $\Omega$ of the error energy $a(R^{(N)}(\xi),R^{(N)}(\xi);\xi)$, where $R^{(N)}$ is the error in \eqref{eq:error-definition}, i.e., $R^{(N)}=u-u_N$.
Finite-dimensional approximation of $R^{(N)}$ controls the decrease of this average under one random update and leads to conditions for almost-sure convergence to zero and bounds on its expectation over the sampling history.

When $F=0$ in \eqref{eq:fbound}, the right-hand side vanishes for every parameter, so $u(\xi)=0$ by \eqref{eq:solution-uniform-bound}.
Algorithm~\ref{alg:VS} starts from $u_0=0$ and then produces only zero directions, leaving $u_N=0$. The approximation is therefore exact from the initial step, and we restrict the remaining analysis to $F>0$.

For problems with finite-term separable representations of the bilinear
form and the right-hand side, Section~\ref{sec:finite-term} verifies the
finite-dimensional approximation conditions introduced below.
Theorem~\ref{thm:finite-term-main} then shows that the VS approximations
converge almost surely to the full parametric solution in the
parameter-averaged energy norm.
We first establish the abstract criterion.

\subsection{One-step error reduction}

To formulate the random updates and their conditional expectations on a common probability space, let $\Xi_n$ denote the $n$-th coordinate map on $\Omega^{\mathbb N}$, and define
\begin{equation}
(\Theta,\mathscr A,\Pp)
:=(\Omega^{\mathbb N},\B^{\otimes\mathbb N},\mu^{\otimes\mathbb N}),
\quad
\zeta_n:=\Xi_n,
\quad\mbox{and}\quad
\F_N:=\sigma(\Xi_1,\ldots,\Xi_N).
\label{eq:sampling-framework}
\end{equation}
All random VS iterations below use the sampling model \eqref{eq:sampling-framework}.
The filtration $\F_N$ records the samples available through iteration $N$, with $\F_0:=\{\emptyset,\Theta\}$.
In Algorithm~\ref{alg:VS}, the error family $R^{(N)}$ in \eqref{eq:error-definition}, i.e., $R^{(N)}=u-u_N$, is constructed from $\Xi_1,\ldots,\Xi_N$, including iterations in which a zero direction produces an identity update.
Thus conditioning on $\F_N$ retains the information that determines the current error, while the next sample $\Xi_{N+1}$ remains independent of $\F_N$ with distribution $\mu$ under \eqref{eq:sampling-framework}.
Write $\omega\in\Theta$ for a sampling history and suppress this dependence in the iterates and error quantities whenever no confusion can arise.
Almost-sure statements and expectations for the VS iteration refer to the sampling measure $\Pp$ on $\Theta$, whereas almost-everywhere statements in the parameter variable refer to the parameter measure $\mu$ on $\Omega$.

Define the pointwise error energy and the parameter-averaged error energy by
\begin{equation}
 E_N(\xi):=a(R^{(N)}(\xi),R^{(N)}(\xi);\xi)
 \quad\mbox{and}\quad
 \cE_N:=\int_{\Omega}E_N(\xi)\dd\mu(\xi).
 \label{eq:error-energy-definitions}
\end{equation}

To estimate the expected decrease of $\cE_N$, define the squared cross-parameter correlation of the error family $R^{(N)}$ by
\begin{equation}
J_N:=\int_{\Omega}\int_{\Omega}
 a(R^{(N)}(\xi),R^{(N)}(\zeta);\xi)^2
 \dd\mu(\zeta)\dd\mu(\xi).
\label{eq:correlation-functional}
\end{equation}

\begin{proposition}
\label{prop:energy-identity}
Let $\{u_N\}_{N\ge0}$ be generated by Algorithm~\ref{alg:VS}, and let $R^{(N)}$, $E_N$, $\cE_N$, and $J_N$ be defined by \eqref{eq:error-definition}, \eqref{eq:error-energy-definitions}, and \eqref{eq:correlation-functional}, respectively.
Fix a sampling history through step $N$ and a candidate value $\zeta\in\Omega$ for the next sample, and perform one further update of Algorithm~\ref{alg:VS} with $\zeta_{N+1}=\zeta$.
Then, for every $\xi\in\Omega$,
\begin{equation}
 E_N(\xi)-E_{N+1}(\xi)
 =
 \tfrac{a(R^{(N)}(\xi),R^{(N)}(\zeta);\xi)^2}
 {a(R^{(N)}(\zeta),R^{(N)}(\zeta);\xi)},
 \label{eq:energy-identity}
\end{equation}
where the right-hand side is defined to be zero when $R^{(N)}(\zeta)=0$.
In particular, $E_{N+1}(\xi)\le E_N(\xi)$ for every $\xi\in\Omega$ and $\cE_{N+1}\le\cE_N$.

For the independent random sample $\zeta_{N+1}=\Xi_{N+1}$, the conditional expected decrease satisfies, $\Pp$-almost surely,
\begin{equation}
\E[\cE_N-\cE_{N+1}\mid\F_N]
=\int_{\Omega}\int_{\Omega}
\frac{a(R^{(N)}(\xi),R^{(N)}(\zeta);\xi)^2}
{a(R^{(N)}(\zeta),R^{(N)}(\zeta);\xi)}
\dd\mu(\zeta)\dd\mu(\xi),
\label{eq:conditional-exact}
\end{equation}
where the integrand is defined to be zero when $R^{(N)}(\zeta)=0$.
Moreover, the conditional expected decrease satisfies, $\Pp$-almost surely,
\begin{equation}
\E[\cE_N-\cE_{N+1}\mid\F_N]
\ge \frac{\alpha^2}{MF^2}J_N.
\label{eq:conditional-correlation}
\end{equation}
\end{proposition}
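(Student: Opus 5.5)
The plan is to derive the pointwise identity \eqref{eq:energy-identity} from the projection property in Lemma~\ref{lem:basic-error-relations}, then integrate it, and finally bound the denominator uniformly.

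For the pointwise identity, fix the history through step $N$ and set $g_{N+1}=R^{(N)}(\zeta)$ by \eqref{eq:direction-as-error}. If $R^{(N)}(\zeta)=0$, Algorithm~\ref{alg:VS} makes the update the identity, so $E_{N+1}=E_N$ and the convention gives \eqref{eq:energy-identity}. Otherwise, \eqref{eq:error-update} and \eqref{eq:one-step-orthogonality} give the $a(\cdot,\cdot;\xi)$-orthogonal decomposition $R^{(N)}(\xi)=\eta_{N+1}(\xi)g_{N+1}+R^{(N+1)}(\xi)$. Pythagoras then gives
\[
E_N(\xi)-E_{N+1}(\xi)=\eta_{N+1}(\xi)^2a(g_{N+1},g_{N+1};\xi).
\]
Substituting \eqref{eq:eta-residual} yields \eqref{eq:energy-identity}. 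The right-hand side is nonnegative, so $E_{N+1}\le E_N$ pointwise. Integrating over $\xi$ gives $\cE_{N+1}\le\cE_N$; all energies are bounded by $MF^2/\alpha^2$ via \eqref{eq:solution-uniform-bound}, so the integrals are finite.

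For \eqref{eq:conditional-exact}, I would write $\cE_N-\cE_{N+1}=\Phi(\Xi_1,\dots,\Xi_N,\Xi_{N+1})$. Here $\Phi(\omega_{\le N},\zeta)=\int_\Omega h(\omega_{\le N},\zeta,\xi)\,\dd\mu(\xi)$, and $h$ is the right-hand side of \eqref{eq:energy-identity}. The freezing lemma for independent variables then gives $\E[\Phi\mid\F_N]=\int_\Omega\Phi(\cdot,\zeta)\,\dd\mu(\zeta)$, and Tonelli, with $h\ge0$, puts this in the stated double-integral form.

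The main obstacle is joint measurability of $(\xi,\zeta,\omega)\mapsto h$. Borel measurability of $\xi\mapsto a(v,w;\xi)$ is assumed only for fixed $v,w$. I would handle this by induction on $N$. First, $R^{(N)}(\xi)=u(\xi)-\sum_k\eta_k(\xi)g_k$, with $g_k=R^{(k-1)}(\Xi_k)$. Second, $\xi\mapsto u(\xi)$ is measurable: in a separable Hilbert space, weak measurability suffices, and it can be obtained from Galerkin approximations on finite-dimensional subspaces via Cramer's rule. Third, $(v,w,\xi)\mapsto a(v,w;\xi)$ is Carathéodory: it is continuous in $(v,w)$ by \eqref{eq:bounded} and measurable in $\xi$, hence jointly measurable on the separable space $\mathcal V\times\mathcal V\times\Omega$. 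Compositions of these maps are then measurable. I would state this briefly rather than grind through it.

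For \eqref{eq:conditional-correlation}, bound the denominator using \eqref{eq:bounded} and \eqref{eq:solution-uniform-bound}. The bound \eqref{eq:solution-uniform-bound} does not apply directly to $R^{(N)}(\zeta)$, so instead use monotonicity: $\|R^{(N)}(\zeta)\|_{\mathcal V}^2\le E_N(\zeta)/\alpha$ with the $\zeta$-energy. Since $E_N(\zeta)\le E_0(\zeta)=a(u(\zeta),u(\zeta);\zeta)=\langle f(\zeta),u(\zeta)\rangle\le F^2/\alpha$, we get $\|R^{(N)}(\zeta)\|_{\mathcal V}^2\le F^2/\alpha^2$. Hence $a(R^{(N)}(\zeta),R^{(N)}(\zeta);\xi)\le MF^2/\alpha^2$. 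When $R^{(N)}(\zeta)=0$, the numerator of $J_N$ also vanishes, so the convention is consistent. Inserting this bound into \eqref{eq:conditional-exact} gives $\E[\cE_N-\cE_{N+1}\mid\F_N]\ge \frac{\alpha^2}{MF^2}J_N$, using $F>0$.
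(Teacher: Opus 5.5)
Your proposal is correct and follows the paper's proof essentially step for step. You obtain the one-step identity from the $a(\cdot,\cdot;\xi)$-orthogonal decomposition in Lemma~\ref{lem:basic-error-relations}; the paper instead expands the quadratic directly, which is the same computation. You then obtain \eqref{eq:conditional-exact} by integrating out the independent sample $\Xi_{N+1}$, with measurability coming from Galerkin approximation and Carath\'eodory joint measurability as in the appendix, and you get \eqref{eq:conditional-correlation} from $\|R^{(N)}(\zeta)\|_{\mathcal V}\le F/\alpha$, which follows from monotonicity and $E_0\le F^2/\alpha$.
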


\begin{proof}
By \eqref{eq:direction-as-error} in Lemma~\ref{lem:basic-error-relations}, $g_{N+1}=R^{(N)}(\zeta)$.
For $g_{N+1}\ne0$, the denominator satisfies $a(g_{N+1},g_{N+1};\xi)\ge\alpha\|g_{N+1}\|_{\mathcal V}^2>0$ by \eqref{eq:coercive}, so the coefficient in \eqref{eq:eta-residual} is well defined.
The error update \eqref{eq:error-update} expands $E_{N+1}(\xi)$ in \eqref{eq:error-energy-definitions} into a quadratic expression whose mixed terms $a(R^{(N)}(\xi),g_{N+1};\xi)$ and $a(g_{N+1},R^{(N)}(\xi);\xi)$ coincide by \eqref{eq:symmetric}.
With the coefficient \eqref{eq:eta-residual}, the expanded energy is
\[
\begin{array}{ll}
E_{N+1}(\xi)
&=E_N(\xi)-2\eta_{N+1}(\xi)a(R^{(N)}(\xi),g_{N+1};\xi)
  +\eta_{N+1}(\xi)^2a(g_{N+1},g_{N+1};\xi)\\[2mm]
&=E_N(\xi)
  -\frac{a(R^{(N)}(\xi),g_{N+1};\xi)^2}
  {a(g_{N+1},g_{N+1};\xi)}.
\end{array}
\]
Because $g_{N+1}=R^{(N)}(\zeta)$, the difference $E_N(\xi)-E_{N+1}(\xi)$ is exactly \eqref{eq:energy-identity} in the nonzero case.
If $g_{N+1}=0$, the update leaves $R^{(N+1)}(\xi)=R^{(N)}(\xi)$, so both the energy decrease and the numerator in \eqref{eq:energy-identity} vanish, in agreement with the zero value assigned to the quotient in \eqref{eq:energy-identity}.
Thus the right-hand side of \eqref{eq:energy-identity} is nonnegative in both cases, so $E_{N+1}(\xi)\le E_N(\xi)$ and, after integration with respect to $\xi$, $\cE_{N+1}\le\cE_N$.

To estimate the conditional expected decrease, first bound the initial energy using the test function $v=u(\xi)$ in \eqref{eq:pvp}.
Since $\|f(\xi)\|_{\V'}\le F$ by \eqref{eq:fbound} and $\|u(\xi)\|_{\V}\le F/\alpha$ by \eqref{eq:solution-uniform-bound},
\[
\begin{array}{ll}
E_0(\xi)
=a(u(\xi),u(\xi);\xi)
=\langle f(\xi),u(\xi)\rangle
\le F\|u(\xi)\|_{\V}
\le\frac{F^2}{\alpha}.
\end{array}
\]
The monotonicity $E_N(\xi)\le E_0(\xi)$ extends the bound $F^2/\alpha$ to every step, while $\alpha\|R^{(N)}(\xi)\|_{\V}^2\le E_N(\xi)$ by \eqref{eq:coercive}.
Hence the error is bounded uniformly over the parameters and the sampling history by
\begin{equation}
\|R^{(N)}(\xi)\|_{\V}\le \tfrac{F}{\alpha}
\qquad \forall\,\xi\in\Omega.
\label{eq:error-uniform-bound}
\end{equation}

For a fixed sampling history through step $N$ and a candidate parameter $\zeta$ for the next update, the integral of \eqref{eq:energy-identity} with respect to $\xi$ is
\begin{equation}
\begin{array}{ll}
\cE_N-\cE_{N+1}
=
\int_{\Omega}
\frac{a(R^{(N)}(\xi),R^{(N)}(\zeta);\xi)^2}
     {a(R^{(N)}(\zeta),R^{(N)}(\zeta);\xi)}
\dd\mu(\xi),
\end{array}
\label{eq:candidate-energy-decrease}
\end{equation}
where $\cE_{N+1}$ is the parameter-averaged error energy after the update at $\zeta$.
By Proposition~\ref{prop:iteration-measurability} in Appendix~\ref{app:measurability}, $R^{(N)}$ is strongly measurable with respect to $\mathcal B\otimes\F_N$.
The map $(\xi,v,w)\mapsto a(v,w;\xi)$ is jointly measurable by Lemma~\ref{lem:solution-measurability}, so the functions $a(R^{(N)}(\xi),R^{(N)}(\zeta);\xi)$ and $a(R^{(N)}(\zeta),R^{(N)}(\zeta);\xi)$ are jointly measurable in $\xi$, $\zeta$, and the history through step $N$.
The quotient in \eqref{eq:candidate-energy-decrease} is therefore measurable on $\{R^{(N)}(\zeta)\ne0\}$, where its denominator is positive, and its extension by zero to $\{R^{(N)}(\zeta)=0\}$ is jointly measurable as well.
The quotient in \eqref{eq:candidate-energy-decrease} lies between zero and $E_N(\xi)\le F^2/\alpha$ by the Cauchy--Schwarz inequality for $R^{(N)}(\xi)$ and $R^{(N)}(\zeta)$ in $a(\cdot,\cdot;\xi)$, so it is integrable over both parameters and the sampling history.
Since Algorithm~\ref{alg:VS} uses $\zeta=\Xi_{N+1}$ with conditional distribution $\mu$ given $\F_N$, the conditional expectation of $\cE_N-\cE_{N+1}$ given $\F_N$ equals the integral of the right-hand side of \eqref{eq:candidate-energy-decrease} with respect to $\mu(\zeta)$.
The two parameter integrals may be interchanged by Fubini's theorem, proving \eqref{eq:conditional-exact}.

For $R^{(N)}(\zeta)\ne0$, the denominator in \eqref{eq:conditional-exact} satisfies, by \eqref{eq:bounded} and \eqref{eq:error-uniform-bound},
\[
\begin{array}{ll}
a(R^{(N)}(\zeta),R^{(N)}(\zeta);\xi)
\le M\|R^{(N)}(\zeta)\|_{\V}^2
\le\frac{MF^2}{\alpha^2}.
\end{array}
\]
Thus, on the set where $R^{(N)}(\zeta)\ne0$, the reciprocal of the denominator in \eqref{eq:conditional-exact} is at least $\alpha^2/(MF^2)$.
On $\{R^{(N)}(\zeta)=0\}$, both the integrand in \eqref{eq:conditional-exact} and $a(R^{(N)}(\xi),R^{(N)}(\zeta);\xi)^2$ vanish.
The integral in \eqref{eq:conditional-exact} is therefore at least $\alpha^2J_N/(MF^2)$ by the definition of $J_N$ in \eqref{eq:correlation-functional}, proving \eqref{eq:conditional-correlation}.
\end{proof}

\subsection{A finite-dimensional correlation estimate}

The conditional decrease bound \eqref{eq:conditional-correlation} in Proposition~\ref{prop:energy-identity} reduces the analysis to finding a lower bound for the correlation functional $J_N$ in \eqref{eq:correlation-functional} in terms of the parameter-averaged error energy $\cE_N$ in \eqref{eq:error-energy-definitions}.
When $R^{(N)}$ takes values in a finite-dimensional subspace of $\V$, the following lemma bounds $J_N$ from below by $\cE_N^2$ divided by a positive upper bound for the subspace dimension.
We formulate the lemma for a general square-integrable function so that it also applies to finite-dimensional approximations of $R^{(N)}$.

\begin{lemma}
\label{lem:finite-correlation}
Let $a(\cdot,\cdot;\xi)$ be the bilinear form in the variational problem \eqref{eq:pvp}.
Let $d_Z\ge1$ be an integer and let $Z\subset\V$ be a subspace with $\dim Z\le d_Z$.
If $X:\Omega\to Z$ is strongly measurable and belongs to $L^2(\Omega,\mu;Z)$, then
\begin{equation}
\int_{\Omega}\int_{\Omega}a(X(\xi),X(\zeta);\xi)^2
\dd\mu(\zeta)\dd\mu(\xi)
\ge\frac1{d_Z}
\Bigl[\int_{\Omega}a(X(\xi),X(\xi);\xi)\dd\mu(\xi)\Bigr]^2.
\label{eq:finite-correlation}
\end{equation}
\end{lemma}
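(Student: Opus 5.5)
The plan is to reduce \eqref{eq:finite-correlation} to a matrix inequality in coordinates and then close with a weighted Cauchy--Schwarz argument. The key tool is the (pseudo-)inverse of the second-moment matrix of $X$, whose trace against the moment matrix itself equals its rank, which is at most $d_Z$.

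\emph{Coordinates.} Let $n:=\dim Z\le d_Z$. If $n=0$ both sides vanish, so assume $n\ge1$. Fix a $(\cdot,\cdot)_{\V}$-orthonormal basis $e_1,\dots,e_n$ of $Z$ and write $X(\xi)=\sum_i x_i(\xi)e_i$ with $x_i(\xi)=(X(\xi),e_i)_{\V}$. Then $x:\Omega\to\RR^n$ is measurable and $\int_\Omega|x|^2\dd\mu=\|X\|_{L^2}^2<\infty$. Set $A(\xi):=[a(e_i,e_j;\xi)]_{i,j}$. This matrix is symmetric by \eqref{eq:symmetric}, measurable in $\xi$ by the measurability assumption on $a$, and has entries bounded by $M$ by \eqref{eq:bounded}. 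Define $y(\xi):=A(\xi)x(\xi)$, so that $y\in L^2(\Omega,\mu;\RR^n)$. With this notation
\[
\begin{array}{ll}
a(X(\xi),X(\zeta);\xi)=y(\xi)^{\top}x(\zeta),
\qquad
a(X(\xi),X(\xi);\xi)=x(\xi)^{\top}y(\xi).
\end{array}
\]
Let $C:=\int_\Omega x(\zeta)x(\zeta)^{\top}\dd\mu(\zeta)$, a symmetric positive semidefinite $n\times n$ matrix. Integrating first in $\zeta$ (Tonelli applies, since the integrand is nonnegative) shows that the left-hand side of \eqref{eq:finite-correlation} equals $\int_\Omega y(\xi)^{\top}Cy(\xi)\dd\mu(\xi)$.

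\emph{Range property.} I expect the only delicate point to be the following: $x(\xi)\in\operatorname{ran}C$ for $\mu$-a.e.\ $\xi$. To see it, take $v\in\ker C=(\operatorname{ran}C)^{\perp}$. Then $0=v^{\top}Cv=\int_\Omega(v^{\top}x)^2\dd\mu$, so $v^{\top}x=0$ a.e. Applying this to a finite basis of $\ker C$ gives the claim off a single null set. Let $P$ be the orthogonal projector onto $\operatorname{ran}C$ and $C^{+}$ the Moore--Penrose pseudo-inverse, so that $C^{1/2}(C^{+})^{1/2}=P$. Then for a.e.\ $\xi$,
\[
x(\xi)^{\top}y(\xi)=\bigl((C^{+})^{1/2}x(\xi)\bigr)^{\top}\bigl(C^{1/2}y(\xi)\bigr).
\]

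\emph{Cauchy--Schwarz.} Applying Cauchy--Schwarz pointwise in $\RR^n$ and then in $L^2(\mu)$ gives
\[
\begin{array}{ll}
\Bigl[\int_\Omega x^{\top}y\,\dd\mu\Bigr]^2
\le\int_\Omega x^{\top}C^{+}x\,\dd\mu\cdot\int_\Omega y^{\top}Cy\,\dd\mu .
\end{array}
\]
The first factor equals $\operatorname{tr}\bigl(C^{+}\int_\Omega xx^{\top}\dd\mu\bigr)=\operatorname{tr}(C^{+}C)=\operatorname{rank}C\le n\le d_Z$. Dividing by $d_Z$ yields \eqref{eq:finite-correlation}.

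Integrability of every quantity follows from $x,y\in L^2$ together with the bound $|A(\xi)|\le nM$. Beyond the range property, I expect no further obstacles.
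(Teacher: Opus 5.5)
Your proposal is correct and follows the paper's proof essentially step for step: coordinates in a $\V$-orthonormal basis of $Z$, the second-moment matrix $C$ with the almost-everywhere range property, the factorization $C^{1/2}(C^{+})^{1/2}=P$, pointwise and then $L^2(\mu)$ Cauchy--Schwarz, and the identity $\int_\Omega x^{\top}C^{+}x\,\dd\mu=\operatorname{tr}(C^{+}C)=\operatorname{rank}C\le d_Z$. Your choice $y=A x$, which writes the correlation as $y(\xi)^{\top}x(\zeta)$, only repackages the paper's identity $x^{\top}BCBx$, with the small bonus that the joint measurability and the Tonelli step become transparent.
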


\begin{proof}
When $Z=\{0\}$, $X=0$ and both sides of \eqref{eq:finite-correlation} vanish, so consider $1\le s=\dim Z\le d_Z$ and choose a $\V$-orthonormal basis $e_1,\ldots,e_s$.
In this basis, write $X(\xi)=\compactsum_{j=1}^{s}x_j(\xi)e_j$, with $x(\xi)=(x_1(\xi),\ldots,x_s(\xi))^T$, and define the matrix $B(\xi)$ by $B_{ij}(\xi)=a(e_i,e_j;\xi)$ for $1\le i,j\le s$.
Since $B_{ij}(\xi)=B_{ji}(\xi)$ by \eqref{eq:symmetric}, we have $B(\xi)^T=B(\xi)$.
The positive semidefinite second-moment matrix $C:=\int_{\Omega}x(\zeta)x(\zeta)^T\dd\mu(\zeta)$ records the correlations across parameters. It need not be invertible, so let $C^\dagger$ denote its Moore--Penrose pseudoinverse.
For each $y\in\ker C$, the identity $\int_\Omega |y^Tx(\zeta)|^2\dd\mu(\zeta)=0$ means that $y^Tx(\zeta)=0$ for $\mu$-almost every $\zeta$.
The intersection of these full-measure sets over a finite basis of $\ker C$ still has full measure. On this set, $x(\zeta)\in(\ker C)^\perp=\operatorname{range}C$, where the equality uses the symmetry of $C$.
Hence the orthogonal projection $P_C$ onto $\operatorname{range}C$ satisfies $P_Cx(\xi)=x(\xi)$ for $\mu$-almost every $\xi$.
For fixed $\xi$, the definition of $C$ gives
\begin{equation}
\int_{\Omega}a(X(\xi),X(\zeta);\xi)^2\dd\mu(\zeta)=x(\xi)^TB(\xi)CB(\xi)x(\xi).
\label{eq:correlation-matrix-identity}
\end{equation}
Set $y(\xi):=C^{1/2}B(\xi)x(\xi)$ and $z(\xi):=(C^\dagger)^{1/2}x(\xi)$.
Their inner product is $y(\xi)^Tz(\xi)=x(\xi)^TB(\xi)x(\xi)=a(X(\xi),X(\xi);\xi)$ for $\mu$-almost every $\xi$, since $C^{1/2}(C^\dagger)^{1/2}=P_C$ and $P_Cx(\xi)=x(\xi)$.
The Cauchy--Schwarz inequality for these vectors $y(\xi)$ and $z(\xi)$ in $\mathbb R^s$ now reads
\begin{equation}
\begin{array}{ll}
\bigl(x(\xi)^TB(\xi)x(\xi)\bigr)^2
\le x(\xi)^TB(\xi)CB(\xi)x(\xi)\,
 x(\xi)^TC^\dagger x(\xi).
\end{array}
\label{eq:correlation-pointwise-cs}
\end{equation}

The bound $x(\xi)^TB(\xi)x(\xi)=a(X(\xi),X(\xi);\xi)\ge\alpha\|X(\xi)\|_{\V}^2\ge0$ from \eqref{eq:coercive} permits us to take square roots in \eqref{eq:correlation-pointwise-cs}.
We then integrate over $\Omega$ and apply the Cauchy--Schwarz inequality to the scalar functions $\xi\mapsto\bigl(x(\xi)^TB(\xi)CB(\xi)x(\xi)\bigr)^{1/2}$ and $\xi\mapsto\bigl(x(\xi)^TC^\dagger x(\xi)\bigr)^{1/2}$ in $L^2(\Omega,\mu)$, which leads to
\begin{equation}
\begin{array}{ll}
\bigl[\int x(\xi)^TB(\xi)x(\xi)\dd\mu(\xi)\bigr]^2
&\le
\bigl[\int x(\xi)^TB(\xi)CB(\xi)x(\xi)\dd\mu(\xi)\bigr]
\\[2mm]
&\quad\times\bigl[\int x(\xi)^TC^\dagger x(\xi)\dd\mu(\xi)\bigr].
\end{array}
\label{eq:correlation-integrated-cs}
\end{equation}
The second integral on the right-hand side of \eqref{eq:correlation-integrated-cs} equals $\int_\Omega x(\xi)^TC^\dagger x(\xi)\dd\mu(\xi)=\tr(C^\dagger C)=\rank C\le s\le d_Z$.
With this bound, \eqref{eq:correlation-integrated-cs} becomes \eqref{eq:finite-correlation} by \eqref{eq:correlation-matrix-identity} and $x(\xi)^TB(\xi)x(\xi)=a(X(\xi),X(\xi);\xi)$.
\end{proof}

\subsection{Expected error reduction under finite-dimensional approximation}

The error $R^{(N)}=u-u_N$ in \eqref{eq:error-definition} need not take values in a finite-dimensional subspace, so we approximate it by its $\V$-orthogonal projection onto such a subspace.
Lemma~\ref{lem:finite-correlation} bounds the projected correlation from below. We estimate the contribution of the remaining error to recover a lower bound for $J_N$, provided that the parameter-averaged error energy $\cE_N$ in \eqref{eq:error-energy-definitions} is sufficiently large relative to the subspace dimension and the projection error.
Proposition~\ref{prop:tail-or-decrease} states this bound and its consequence for $\E[\cE_N-\cE_{N+1}\mid\F_N]$ through \eqref{eq:conditional-correlation} in Proposition~\ref{prop:energy-identity}.

To state the approximation assumptions, define $\operatorname{dist}_{\mathcal V}(v,W):=\inf_{w\in W}\|v-w\|_{\mathcal V}$ for a closed subspace $W\subset\mathcal V$.
For each $N$, suppose that the error family can be approximated in an $\F_N$-dependent finite-dimensional subspace $Z_N\subset\mathcal V$, whose $\mathcal V$-orthogonal projection $P_N:\mathcal V\to Z_N$ satisfies
\begin{equation}
\omega\mapsto P_N(\omega)v
\quad\text{is }\F_N\text{-measurable for every fixed }v\in\mathcal V.
\label{eq:projection-measurability}
\end{equation}
Assume that there are deterministic positive integers $D_N$ and deterministic nonnegative numbers $\tau_N$ such that, for each $N$ and almost every sampling history,
\begin{equation}
\dim Z_N\le D_N
\quad\mbox{and}\quad
\operatorname*{ess\,sup}_{\xi\in\Omega}
\dist_{\V}(R^{(N)}(\xi),Z_N)\le\tau_N.
\label{eq:abstract-approx}
\end{equation}
Here $D_N$ bounds the dimension of the approximation space $Z_N$ and need not equal the number of directions generated by Algorithm~\ref{alg:VS}.
Because there are countably many iterations, we may intersect these full-measure events and work on a single event of probability one where \eqref{eq:abstract-approx} holds for every $N$.
On this event, the orthogonal projection separates the error into a component in $Z_N$ and an approximation tail through the decomposition
\begin{equation}
R^{(N)}(\xi)=X_N(\xi)+Y_N(\xi),\quad X_N=P_NR^{(N)}\quad\mbox{and}\quad Y_N=(I-P_N)R^{(N)}.
\label{eq:XY-decomposition}
\end{equation}
The projection property gives $\|Y_N(\xi)\|_{\V}=\dist_{\V}(R^{(N)}(\xi),Z_N)\le\tau_N$ for $\mu$-almost every $\xi$ and places $X_N(\xi)$ in $Z_N$.
For each sampling history, the orthogonal projection $P_N$ is continuous in $v$, while for each fixed $v$ the map $\omega\mapsto P_N(\omega)v$ is $\F_N$-measurable by \eqref{eq:projection-measurability}.
The Carath\'eodory joint measurability theorem therefore applies on the separable space $\mathcal V$, so $(v,\omega)\mapsto P_N(\omega)v$ is jointly measurable.
By Proposition~\ref{prop:iteration-measurability} in Appendix~\ref{app:measurability}, the map $(\xi,\omega)\mapsto(R^{(N)}(\xi,\omega),\omega)$ is measurable.
Composing this map with $(v,\omega)\mapsto P_N(\omega)v$ proves the joint strong measurability of $X_N=P_NR^{(N)}$.
Since subtraction is continuous on $\V\times\V$, $Y_N=R^{(N)}-X_N$ is jointly strongly measurable as well.

\begin{proposition}
\label{prop:tail-or-decrease}
Let $\{u_N\}_{N\ge0}$ be generated by Algorithm~\ref{alg:VS}.
Let $R^{(N)}$, $\cE_N$, and $J_N$ be defined by \eqref{eq:error-definition}, \eqref{eq:error-energy-definitions}, and \eqref{eq:correlation-functional}, respectively.
Assume that the finite-dimensional spaces $Z_N$ satisfy \eqref{eq:abstract-approx} for every $N$ and $\Pp$-almost every sampling history, with deterministic positive integers $D_N$ and deterministic nonnegative numbers $\tau_N$, and that the $\mathcal V$-orthogonal projections $P_N$ onto $Z_N$ satisfy \eqref{eq:projection-measurability}.
There are constants $C_0,c_0>0$, depending only on $\alpha$ and $M$, such that, for every $N$ and $\Pp$-almost every sampling history satisfying
\begin{equation}
\cE_N\ge C_0D_N\tau_N^2,
\label{eq:above-tail}
\end{equation}
the correlation functional and the conditional expected decrease obey
\[
\begin{array}{ll}
J_N\ge c_0\dfrac{\cE_N^2}{D_N},
\qquad
\E[\cE_N-\cE_{N+1}\mid\F_N]
\ge c_*\dfrac{\cE_N^2}{D_N},
\end{array}
\]
where $c_*:=\alpha^2c_0/(MF^2)$ and $F$ is defined in \eqref{eq:fbound}. 
\end{proposition}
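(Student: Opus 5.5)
We plan to bound $J_N$ from below by splitting $R^{(N)}=X_N+Y_N$ as in \eqref{eq:XY-decomposition}, applying Lemma~\ref{lem:finite-correlation} to the projected family $X_N$, and showing that the tail $Y_N$ perturbs both sides of that inequality by amounts controlled by $\tau_N$. The conditional bound then follows at once from \eqref{eq:conditional-correlation} in Proposition~\ref{prop:energy-identity}, giving $c_*=\alpha^2c_0/(MF^2)$.

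\textbf{Step 1: energy of the projected part.} Write $\cE_N^X:=\int_\Omega a(X_N(\xi),X_N(\xi);\xi)\dd\mu(\xi)$. Since $\|\cdot\|_{a,\xi}$ is a norm with $\|Y_N(\xi)\|_{a,\xi}\le\sqrt M\tau_N$ almost everywhere, the triangle inequality in $L^2(\Omega,\mu)$ gives
\[
(\cE_N^X)^{1/2}\ge \cE_N^{1/2}-\sqrt M\tau_N .
\]
If $C_0\ge 16M$, then \eqref{eq:above-tail} with $D_N\ge1$ gives $\sqrt M\tau_N\le \tfrac14\cE_N^{1/2}$, so $\cE_N^X\ge \tfrac{9}{16}\cE_N$. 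Lemma~\ref{lem:finite-correlation}, applied with $Z=Z_N$ and $d_Z=D_N$ for a fixed history, then yields
\[
J_N^X:=\iint a(X_N(\xi),X_N(\zeta);\xi)^2\dd\mu\dd\mu\ \ge\ \frac{(\cE_N^X)^2}{D_N}.
\]
The integrability of $X_N$ follows from $\|X_N\|_{\V}\le\|R^{(N)}\|_{\V}\le F/\alpha$, and its measurability was established above.

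\textbf{Step 2: perturbing $J$.} View $K(\xi,\zeta):=a(R^{(N)}(\xi),R^{(N)}(\zeta);\xi)$ as an element of $L^2(\mu\otimes\mu)$, so that $J_N^{1/2}=\|K\|$. Expanding by bilinearity gives
\[
K=K_{XX}+K_{XY}+K_{YX}+K_{YY},
\]
with $K_{XX}$ the $X$--$X$ kernel, so $J_N^{1/2}\ge (J_N^X)^{1/2}-\|K_{XY}\|-\|K_{YX}\|-\|K_{YY}\|$. Each remainder term contains a $Y_N$ factor. Using Cauchy--Schwarz in $a(\cdot,\cdot;\xi)$ together with the bound $\|v\|_{a,\xi}\le\sqrt M\|v\|_{\V}$, each such kernel satisfies, for example,
\[
|K_{XY}(\xi,\zeta)|\le \|X_N(\xi)\|_{a,\xi}\,\sqrt M\,\tau_N,
\qquad
|K_{YX}(\xi,\zeta)|\le \sqrt M\tau_N\,\sqrt M\|X_N(\zeta)\|_{\V}.
\]
Their $L^2$ norms are therefore bounded by $C(\alpha,M)\,\tau_N(\cE_N)^{1/2}$, because $\int\|X_N\|_{\V}^2\le\int\|R^{(N)}\|_{\V}^2\le\cE_N/\alpha$. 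Similarly $\|K_{YY}\|\le M\tau_N^2\le C\tau_N\cE_N^{1/2}$ under \eqref{eq:above-tail}.

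\textbf{Step 3: the obstacle.} The delicate point is that the perturbation is of size $\tau_N\cE_N^{1/2}$, while the main term $(J_N^X)^{1/2}\gtrsim \cE_N/\sqrt{D_N}$ carries the $D_N^{-1/2}$ factor. Their ratio is $\tau_N\sqrt{D_N}/\cE_N^{1/2}\le C_0^{-1/2}$, and this is exactly why the threshold \eqref{eq:above-tail} has the form $D_N\tau_N^2$. Choosing $C_0$ large, depending only on $\alpha$ and $M$, makes the total perturbation at most half of $\tfrac{9}{16}\cE_N/\sqrt{D_N}$. Hence
\[
J_N\ge c_0\,\frac{\cE_N^2}{D_N}
\]
with, for example, $c_0=(9/32)^2$. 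I expect the only care needed is tracking the mixed $\|\cdot\|_{\V}$ versus $\|\cdot\|_{a,\xi}$ norms in Step 2. The $K_{YX}$ term, in which $X_N(\zeta)$ is measured in the $\xi$-energy, is handled through the uniform equivalence constants $\alpha$ and $M$. Finally, \eqref{eq:conditional-correlation} converts the bound on $J_N$ into the bound on $\E[\cE_N-\cE_{N+1}\mid\F_N]$.
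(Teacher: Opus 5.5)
Your proposal is correct and follows essentially the paper's route. You split $R^{(N)}=X_N+Y_N$ by the projection onto $Z_N$ and apply Lemma~\ref{lem:finite-correlation} to $X_N$. You then bound the three tail kernels in $L^2(\mu\otimes\mu)$, use the reverse triangle inequality with the threshold \eqref{eq:above-tail} so that the tail is at most half the main term, and finish with \eqref{eq:conditional-correlation}. The one difference is that you measure $X_N$ by its parameter-averaged energy, using Minkowski's inequality, instead of by $S_N=\int_\Omega\|X_N(\xi)\|_{\mathcal V}^2\dd\mu(\xi)$ and coercivity. This avoids the $\alpha/M$ loss in the main term and gives an absolute constant $c_0=(9/32)^2$ in place of the paper's $\alpha^2/(64M^2)$, with all dependence on $\alpha$ and $M$ moved into $C_0$.
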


\begin{proof}
Set $S_N:=\int_{\Omega}\|X_N(\xi)\|_{\mathcal V}^2\dd\mu(\xi)$ to measure the projected error.
The inclusion $X_N(\xi)\in Z_N$ and the dimension bound $\dim Z_N\le D_N$ allow us to apply Lemma~\ref{lem:finite-correlation} to $X_N$ in the form
\[
\begin{array}{ll}
\int_\Omega\int_\Omega
a(X_N(\xi),X_N(\zeta);\xi)^2
\dd\mu(\zeta)\dd\mu(\xi)\ge
\frac1{D_N}
\Bigl[
\int_\Omega a(X_N(\xi),X_N(\xi);\xi)\dd\mu(\xi)
\Bigr]^2.
\end{array}
\]
Taking square roots and using the uniform coercivity bound $a(X_N(\xi),X_N(\xi);\xi)\ge\alpha\|X_N(\xi)\|_{\mathcal V}^2$, we obtain
\begin{equation}
\bigl\|a(X_N(\xi),X_N(\zeta);\xi)\bigr\|_{L^2(\mu\otimes\mu)}
\ge
\frac{\alpha}{\sqrt{D_N}}S_N.
\label{eq:main-correlation-norm}
\end{equation}
To express \eqref{eq:main-correlation-norm} in terms of $\cE_N$, we need a lower bound on the squared $L^2(\Omega,\mu;\V)$ norm $S_N$ of the projected error.
The decomposition \eqref{eq:XY-decomposition} and \eqref{eq:bounded} bound $E_N(\xi)$ by $M(\|X_N(\xi)\|_{\V}+\|Y_N(\xi)\|_{\V})^2$.
The inequality $(s+t)^2\le2(s^2+t^2)$ separates these contributions, while $\mu(\Omega)=1$ and the almost-everywhere bound $\|Y_N(\xi)\|_{\mathcal V}\le\tau_N$ limit the integrated squared tail norm to $\tau_N^2$. Consequently,
\[
\begin{array}{ll}
\cE_N
&\le M\int_\Omega
\bigl(\|X_N(\xi)\|_{\mathcal V}
      +\|Y_N(\xi)\|_{\mathcal V}\bigr)^2
\dd\mu(\xi)\\[2mm]
&\le2M\int_\Omega
\bigl(\|X_N(\xi)\|_{\mathcal V}^2
      +\|Y_N(\xi)\|_{\mathcal V}^2\bigr)
\dd\mu(\xi)\le2M(S_N+\tau_N^2).
\end{array}
\]
Under the threshold condition \eqref{eq:above-tail}, $\tau_N^2\le\cE_N/(C_0D_N)\le\cE_N/C_0$ because $D_N\ge1$.
If $C_0\ge4M$, then $\tau_N^2\le\cE_N/(4M)$, and we can isolate $S_N$ in $\cE_N\le2M(S_N+\tau_N^2)$ as
\begin{equation}
S_N
\ge\frac{\cE_N}{2M}-\tau_N^2
\ge\frac{\cE_N}{2M}-\frac{\cE_N}{4M}
=\frac{\cE_N}{4M}.
\label{eq:SN-lower}
\end{equation}

The decomposition \eqref{eq:XY-decomposition} relates the projected correlation in \eqref{eq:main-correlation-norm} to the full correlation functional $J_N$ in \eqref{eq:correlation-functional} through the expansion
\[
\begin{array}{ll}
a(R^{(N)}(\xi),R^{(N)}(\zeta);\xi)
={}&a(X_N(\xi),X_N(\zeta);\xi)+a(X_N(\xi),Y_N(\zeta);\xi)\\[2mm]
&+a(Y_N(\xi),X_N(\zeta);\xi)+a(Y_N(\xi),Y_N(\zeta);\xi).
\end{array}
\]
The three terms containing $Y_N$ account for the approximation tail, which we control in $L^2(\mu\otimes\mu)$.
For the first mixed term, we square the bound \eqref{eq:bounded} with $v=X_N(\xi)$ and $w=Y_N(\zeta)$ and separate the parameter integrals by Fubini's theorem.
The integral $\int_\Omega\|X_N(\xi)\|_{\V}^2\dd\mu(\xi)$ equals $S_N$, while $\int_\Omega\|Y_N(\zeta)\|_{\V}^2\dd\mu(\zeta)\le\tau_N^2$, so
\[
\begin{array}{ll}
\bigl\|a(X_N(\xi),Y_N(\zeta);\xi)\bigr\|_{L^2(\mu\otimes\mu)}^2
&\le M^2\int_\Omega\int_\Omega
\|X_N(\xi)\|_{\mathcal V}^2
\|Y_N(\zeta)\|_{\mathcal V}^2
\dd\mu(\zeta)\dd\mu(\xi)\\[2mm]
&\le M^2S_N\tau_N^2.
\end{array}
\]
Thus $a(X_N(\xi),Y_N(\zeta);\xi)$ has $L^2(\mu\otimes\mu)$-norm at most $M\sqrt{S_N}\tau_N$.
The same calculation, with $\|Y_N(\xi)\|_{\mathcal V}\le\tau_N$, bounds the norms of $a(Y_N(\xi),X_N(\zeta);\xi)$ and $a(Y_N(\xi),Y_N(\zeta);\xi)$ by $M\sqrt{S_N}\tau_N$ and $M\tau_N^2$, respectively.
We insert these three estimates and \eqref{eq:main-correlation-norm} into the reverse triangle inequality in $L^2(\mu\otimes\mu)$ to compare the full and projected correlations as
\begin{equation}
\sqrt{J_N}
\ge\frac{\alpha}{\sqrt{D_N}}S_N
-2M\sqrt{S_N}\tau_N-M\tau_N^2.
\label{eq:tail-subtraction}
\end{equation}

It remains to show that the two tail terms $2M\sqrt{S_N}\tau_N$ and $M\tau_N^2$ in \eqref{eq:tail-subtraction} are small relative to $\alpha S_N/\sqrt{D_N}$.
If $\cE_N=0$, the desired bound $J_N\ge c_0\cE_N^2/D_N$ follows from $J_N\ge0$.
For $\cE_N>0$, the bound $S_N\ge\cE_N/(4M)>0$ in \eqref{eq:SN-lower} permits division by $\alpha S_N/\sqrt{D_N}$. Together with the threshold condition $\cE_N\ge C_0D_N\tau_N^2$, it controls the ratio
\[
\begin{array}{ll}
\frac{D_N\tau_N^2}{S_N}
\le
\frac{\cE_N/C_0}{\cE_N/(4M)}
=\frac{4M}{C_0}.
\end{array}
\]
Consequently,
\[
\begin{cases}
\frac{2M\sqrt{S_N}\tau_N}{\alpha S_N/\sqrt{D_N}}
=\frac{2M}{\alpha}\sqrt{\frac{D_N\tau_N^2}{S_N}}
\le\frac{4M\sqrt M}{\alpha\sqrt{C_0}},\\[2mm]
\frac{M\tau_N^2}{\alpha S_N/\sqrt{D_N}}
=\frac{M}{\alpha}\frac{D_N\tau_N^2}{S_N\sqrt{D_N}}
\le\frac{M}{\alpha}\frac{D_N\tau_N^2}{S_N}
\le\frac{4M^2}{\alpha C_0}.
\end{cases}
\]
Choose $C_0$ sufficiently large that $4M\sqrt M/(\alpha\sqrt{C_0})+4M^2/(\alpha C_0)\le1/2$.
With this choice of $C_0$, the sum $2M\sqrt{S_N}\tau_N+M\tau_N^2$ is at most $\alpha S_N/(2\sqrt{D_N})$, and the correlation functional satisfies, by \eqref{eq:tail-subtraction} and \eqref{eq:SN-lower},
\[
\begin{array}{ll}
\sqrt{J_N}\ge\frac{\alpha}{2\sqrt{D_N}}S_N
\ge\frac{\alpha}{2\sqrt{D_N}}\frac{\cE_N}{4M}
=\frac{\alpha}{8M\sqrt{D_N}}\cE_N.
\end{array}
\]
Squaring $\sqrt{J_N}\ge\alpha\cE_N/(8M\sqrt{D_N})$ yields
\begin{equation}
\begin{array}{ll}
J_N\ge
\frac{\alpha^2}{64M^2}
\frac{\cE_N^2}{D_N}.
\end{array}
\label{eq:full-correlation-lower}
\end{equation}
Thus $c_0=\alpha^2/(64M^2)$ is admissible.

Since $J_N\ge c_0\cE_N^2/D_N$ by \eqref{eq:full-correlation-lower}, with $c_0=\alpha^2/(64M^2)$, the conditional expected decrease satisfies the estimate \eqref{eq:conditional-correlation} from Proposition~\ref{prop:energy-identity} in the form
\[
\begin{array}{ll}
\E[\cE_N-\cE_{N+1}\mid\F_N]
\ge\frac{\alpha^2}{MF^2}J_N
\ge\frac{\alpha^2c_0}{MF^2}\frac{\cE_N^2}{D_N}
=c_*\frac{\cE_N^2}{D_N}.
\end{array}
\]
An explicit admissible choice of $C_0$ is given in Appendix~\ref{app:tail}.
\end{proof}

\subsection{Almost-sure convergence and expected error bounds}

We now use Proposition~\ref{prop:tail-or-decrease} to establish conditions under which the parameter-averaged error energy $\cE_N$ in \eqref{eq:error-energy-definitions} converges to zero $\Pp$-almost surely.
For the dimension bounds $D_N$ and tail tolerances $\tau_N$ in \eqref{eq:abstract-approx}, assume that $D_N\tau_N^2\to0$.
The threshold $C_0D_N\tau_N^2$ in \eqref{eq:above-tail} then tends to zero, so Proposition~\ref{prop:tail-or-decrease} applies whenever $\cE_N$ stays above a fixed positive value and $N$ is sufficiently large.
To rule out a positive limit of $\cE_N$, the proof sums the lower bounds for the expected decreases and uses $\compactsum_{N=0}^{\infty}D_N^{-1}=\infty$ to contradict the upper bound $\cE_0$ on the expected total decrease.
Under additional quantitative bounds on $D_N$ and $\tau_N$, the following theorem also bounds $\E[\cE_N]$, where the expectation is taken over the sampling history.

\begin{theorem}
\label{thm:abstract-main}
Let $\{u_N\}_{N\ge0}$ be generated by Algorithm~\ref{alg:VS}, and let $R^{(N)}$ and $\cE_N$ be defined by \eqref{eq:error-definition} and \eqref{eq:error-energy-definitions}.
Suppose that there are $\F_N$-dependent finite-dimensional spaces $Z_N$ with $\mathcal V$-orthogonal projections $P_N$ satisfying \eqref{eq:projection-measurability}, together with deterministic positive integers $D_N$ and deterministic nonnegative numbers $\tau_N$ satisfying \eqref{eq:abstract-approx}, and that
\begin{equation}
\begin{array}{ll}
D_N\tau_N^2\to0
\quad\mbox{and}\quad
\compactsum_{N=0}^{\infty}\frac1{D_N}=\infty.
\end{array}
\label{eq:abstract-convergence-assumptions}
\end{equation}
Then
\begin{equation*}
\begin{array}{ll}
\cE_N\to0
\qquad \Pp\text{-a.s.}
\end{array}
\end{equation*}
Let $C_0$ be the threshold constant in Proposition~\ref{prop:tail-or-decrease}.
Suppose in addition that there are constants $B,C_{\gamma}>0$ such that, for all sufficiently large $N$,
\begin{equation}
C_0D_N\tau_N^2\le\frac{B}{N+1}
\quad\mbox{and}\quad
D_N\le C_{\gamma}(N+1).
\label{eq:abstract-rate-assumptions}
\end{equation}
Then there is a constant $C>0$ such that
\begin{equation*}
\begin{array}{ll}
\E[\cE_N]\le\frac{C}{\log(N+2)}
\end{array}
\end{equation*}
for all sufficiently large $N$.
\end{theorem}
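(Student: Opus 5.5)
The plan is to prove the two assertions separately. Almost-sure convergence will come from a monotone-limit argument combined with summed expected decreases. The rate will come from a scalar recursion for $e_N:=\E[\cE_N]$ that is deduced from Proposition~\ref{prop:tail-or-decrease}.

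\emph{Almost-sure convergence.} By Proposition~\ref{prop:energy-identity}, $\cE_N$ is nonincreasing in $N$ for every sampling history, and $0\le\cE_N\le\cE_0\le F^2/\alpha$. Hence $\cE_N\downarrow L$ pointwise for some random variable $L\ge0$. Suppose $\Pp(L\ge\varepsilon)>0$ for some $\varepsilon>0$. Since $D_N\tau_N^2\to0$, there is $N_\varepsilon$ such that $C_0D_N\tau_N^2\le\varepsilon$ for all $N\ge N_\varepsilon$.

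Consider the $\F_N$-measurable event $A_N:=\{\cE_N\ge C_0D_N\tau_N^2\}$. On $A_N$, Proposition~\ref{prop:tail-or-decrease} gives $\E[\cE_N-\cE_{N+1}\mid\F_N]\ge c_*\cE_N^2/D_N$. Off $A_N$, the conditional decrease is still nonnegative. Therefore
\[
\E[\cE_N-\cE_{N+1}]\ge \frac{c_*}{D_N}\E\bigl[\cE_N^2\mathbf 1_{A_N}\bigr].
\]
For $N\ge N_\varepsilon$ we have $\{L\ge\varepsilon\}\subset\{\cE_N\ge\varepsilon\}\subset A_N$, so the right-hand side is at least $c_*\varepsilon^2\Pp(L\ge\varepsilon)/D_N$. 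Summing over $N\ge N_\varepsilon$ telescopes on the left to at most $\E[\cE_{N_\varepsilon}]\le F^2/\alpha$. The right-hand side diverges by $\sum_N D_N^{-1}=\infty$, which is a contradiction. Hence $\Pp(L\ge\varepsilon)=0$ for every $\varepsilon>0$, and so $L=0$ almost surely.

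\emph{Rate.} Set $t_N:=C_0D_N\tau_N^2\le B/(N+1)$. Using $\cE_N^2\mathbf 1_{A_N}\ge\cE_N^2-t_N^2$ together with Jensen's inequality $\E[\cE_N^2]\ge e_N^2$, the display above yields, for large $N$,
\[
e_{N+1}\le e_N-\frac{c_*}{C_\gamma(N+1)}\Bigl(e_N^2-\frac{B^2}{(N+1)^2}\Bigr),
\]
and $e_N$ is nonincreasing.

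I then prove $e_N\le K/\log(N+2)$ for large $N$ by induction, with $K$ chosen large. There are two cases.
\begin{itemize}
\item If $e_N\le 2B/(N+1)$, then monotonicity gives $e_{N+1}\le 2B/(N+1)$, which is below $K/\log(N+3)$ for large $N$.
\item Otherwise $e_N^2-B^2/(N+1)^2\ge\tfrac34e_N^2$, so $e_{N+1}\le e_N-c\,e_N^2/(N+1)$ with $c:=3c_*/(4C_\gamma)$.
\end{itemize}
In the second case, the map $x\mapsto x-cx^2/(N+1)$ is increasing on $[0,(N+1)/(2c)]$, and $e_N$ lies in this range for large $N$ since $e_N\le F^2/\alpha$. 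Hence it suffices to check
\[
\frac{K}{\log(N+2)}-\frac{cK^2}{(N+1)\log^2(N+2)}\le\frac{K}{\log(N+3)}.
\]
This follows from $\log(N+3)-\log(N+2)\le 1/(N+2)$, once $K$ is large relative to $1/c$.

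The base case is arranged by choosing $K\ge (F^2/\alpha)\log(N_0+2)$ at the starting index $N_0$ from which the assumptions hold. The inequality $e_N\le F^2/\alpha$ is used as a uniform a priori bound.

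\emph{Main obstacle.} I expect the main difficulty to be the induction step in the rate argument. The perturbation term $B^2/(N+1)^2$ and the threshold case must be handled together without losing the logarithmic rate. My first attempt is the case split above, using the monotonicity of the quadratic map. The measurability of $A_N$ and the use of Jensen's inequality are routine.
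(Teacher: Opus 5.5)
Your proposal is correct and follows the paper's proof. The almost-sure part is the same argument: expected decreases summed on threshold events, played off against $\sum_N D_N^{-1}=\infty$. The rate part likewise reduces to a scalar recursion for $\E[\cE_N]$ compared with an inverse-logarithmic sequence, differing only in bookkeeping: your bound $\E[\cE_N^2\mathbf 1_{A_N}]\ge e_N^2-t_N^2$ and case split replace the paper's $(m_N-B/(N+1))_+^2$ recursion and the monotone-comparison Lemma~\ref{lem:scalar-log-bound}.

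One small detail to record concerns your second case. Comparing the images of $e_N$ and $K/\log(N+2)$ under $x\mapsto x-cx^2/(N+1)$ also requires $K/\log(N+2)\le (N+1)/(2c)$, not only $e_N\le (N+1)/(2c)$. You can secure this by fixing the $c$- and $B$-dependent lower bounds on $K$ first and then taking $N_0$ large, which is compatible with $K\ge (F^2/\alpha)\log(N_0+2)$.
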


\begin{proof}
By Proposition~\ref{prop:energy-identity}, $0\le\cE_{N+1}\le\cE_N$ along every sampling history, so $\cE_N$ converges monotonically to a nonnegative random limit $\cE_*$.
To exclude a positive limit, fix $\varepsilon>0$ and consider the sampling histories in $A_N^\varepsilon:=\{\cE_N\ge\varepsilon\}$.
By Proposition~\ref{prop:iteration-measurability}, $\cE_N$ is $\F_N$-measurable, so $A_N^\varepsilon=\{\omega\in\Theta:\cE_N(\omega)\ge\varepsilon\}$ belongs to $\F_N$.
Since $D_N\tau_N^2\to0$, choose $N_\varepsilon$ so that $C_0D_N\tau_N^2<\varepsilon$ for every $N\ge N_\varepsilon$.
On $A_N^\varepsilon$ and for $N\ge N_\varepsilon$, the energy satisfies $\cE_N\ge\varepsilon>C_0D_N\tau_N^2$, so the conditional decrease estimate in Proposition~\ref{prop:tail-or-decrease} applies in the form
\[
\begin{array}{ll}
\mathbf1_{A_N^\varepsilon}
\E[\cE_N-\cE_{N+1}\mid\F_N]
\ge
c_*\frac{\varepsilon^2}{D_N}
\mathbf1_{A_N^\varepsilon}.
\end{array}
\]
Since $\mathbf1_{A_N^\varepsilon}$ is $\F_N$-measurable, the expected decrease on $A_N^\varepsilon$ satisfies
\begin{equation}
\begin{array}{ll}
\E[(\cE_N-\cE_{N+1})\mathbf1_{A_N^\varepsilon}]
=\E\!\bigl[
\mathbf1_{A_N^\varepsilon}
\E[\cE_N-\cE_{N+1}\mid\F_N]
\bigr]
\ge c_*\frac{\varepsilon^2}{D_N}\Pp(A_N^\varepsilon).
\end{array}
\label{eq:expected-decrease-on-event}
\end{equation}
To bound $\compactsum_{N=N_\varepsilon}^{K}\Pp(A_N^\varepsilon)/D_N$, sum \eqref{eq:expected-decrease-on-event} from $N=N_\varepsilon$ to $K$.
Each decrease $\cE_N-\cE_{N+1}$ is nonnegative, so the indicators may be bounded by $1$, after which the differences telescope to $\cE_{N_\varepsilon}-\cE_{K+1}$ and
\[
\begin{array}{ll}
c_*\varepsilon^2\compactsum_{N=N_\varepsilon}^{K}
\frac{\Pp(A_N^\varepsilon)}{D_N}
&\le\compactsum_{N=N_\varepsilon}^{K}
\E[(\cE_N-\cE_{N+1})\mathbf1_{A_N^\varepsilon}]\\[2mm]
&\le\E\bigl[\compactsum_{N=N_\varepsilon}^{K}(\cE_N-\cE_{N+1})\bigr]
=\E[\cE_{N_\varepsilon}-\cE_{K+1}]
\le\E[\cE_{N_\varepsilon}].
\end{array}
\]
The upper bound $\E[\cE_{N_\varepsilon}]\le\cE_0<\infty$ is independent of $K$, so letting $K\to\infty$ yields
\begin{equation}
\compactsum_{N=N_\varepsilon}^{\infty}
\frac{\Pp(A_N^\varepsilon)}{D_N}<\infty.
\label{eq:weighted-probability-sum}
\end{equation}
If $\Pp(\cE_*\ge\varepsilon)>0$, the inequality $\cE_N\ge\cE_*$ places $\{\cE_*\ge\varepsilon\}$ inside $A_N^\varepsilon$ for every $N$, so
\[
\begin{array}{ll}
\compactsum_{N=N_\varepsilon}^{\infty}
\frac{\Pp(A_N^\varepsilon)}{D_N}
\ge\Pp(\cE_*\ge\varepsilon)
\compactsum_{N=N_\varepsilon}^{\infty}\frac1{D_N}
=\infty,
\end{array}
\]
which contradicts \eqref{eq:weighted-probability-sum} and forces $\Pp(\cE_*\ge\varepsilon)=0$.
In particular, $\Pp(\cE_*\ge1/k)=0$ for each $k\in\mathbb N$, and $\{\cE_*>0\}=\bigcup_{k=1}^{\infty}\{\cE_*\ge1/k\}$ has probability zero.
Thus $\cE_*=0$ and $\cE_N\to0$ almost surely.

For the expectation bound, write $m_N:=\E[\cE_N]$ and distinguish sampling histories on which the error energy is at least $B/(N+1)$ from those below this moving threshold by setting $H_N:=\{\cE_N\ge B/(N+1)\}$.
For all sufficiently large $N$, $\cE_N\ge B/(N+1)\ge C_0D_N\tau_N^2$ on $H_N$ by \eqref{eq:abstract-rate-assumptions}, and the conditional expected decrease is bounded below by Proposition~\ref{prop:tail-or-decrease} as
\begin{equation}
\begin{array}{ll}
\mathbf1_{H_N}
\E[\cE_N-\cE_{N+1}\mid\F_N]
\ge
c_*\frac{\cE_N^2}{D_N}\mathbf1_{H_N}.
\end{array}
\label{eq:conditional-decrease-on-threshold-event}
\end{equation}
By Proposition~\ref{prop:energy-identity}, $\cE_N-\cE_{N+1}\ge0$, so multiplication of $\E[\cE_N-\cE_{N+1}\mid\F_N]$ by $\mathbf1_{H_N}$ cannot increase its value.
With $D_N\le C_{\gamma}(N+1)$ from \eqref{eq:abstract-rate-assumptions}, the unconditional decrease $m_N-m_{N+1}$ therefore satisfies, by \eqref{eq:conditional-decrease-on-threshold-event},
\begin{equation}
\begin{array}{ll}
m_N-m_{N+1}
=\E\!\bigl[\E[\cE_N-\cE_{N+1}\mid\F_N]\bigr]
\ge c_*\E\Bigl[\frac{\cE_N^2}{D_N}\mathbf1_{H_N}\Bigr]
\ge\frac{c_*}{C_{\gamma}(N+1)}\E[\cE_N^2\mathbf1_{H_N}].
\end{array}
\label{eq:mean-energy-decrease}
\end{equation}
Since $\cE_N<B/(N+1)$ on $H_N^c$, $\E[\cE_N\mathbf1_{H_N^c}]\le B/(N+1)$.
The remaining contribution $\E[\cE_N\mathbf1_{H_N}]$ is nonnegative, so, with $(x)_+:=\max\{x,0\}$,
\begin{equation}
\begin{array}{ll}
\E[\cE_N\mathbf1_{H_N}]
=
m_N-\E[\cE_N\mathbf1_{H_N^c}]
\ge
(m_N-\frac{B}{N+1})_+.
\end{array}
\label{eq:mean-energy-above-threshold}
\end{equation}
For $\cE_N\mathbf1_{H_N}$ and the constant function $1$, the Cauchy--Schwarz inequality reads $\E[\cE_N^2\mathbf1_{H_N}]\ge(\E[\cE_N\mathbf1_{H_N}])^2$.
The second moment $\E[\cE_N^2\mathbf1_{H_N}]$ is therefore at least $(m_N-B/(N+1))_+^2$ by \eqref{eq:mean-energy-above-threshold}, so the decrease estimate \eqref{eq:mean-energy-decrease} becomes, with $c:=c_*/C_{\gamma}$,
\[
\begin{array}{ll}
m_N-m_{N+1}
\ge
\frac{c}{N+1}
(m_N-\frac{B}{N+1})_+^2,
\end{array}
\]
or equivalently
\begin{equation*}
\begin{array}{ll}
m_{N+1}
\le
m_N-\frac{c}{N+1}
(m_N-\frac{B}{N+1})_+^2.
\end{array}
\end{equation*}
Since $0\le m_N\le F^2/\alpha$ and \eqref{eq:scalar-log-recursion} holds with $b=B$, Lemma~\ref{lem:scalar-log-bound} in Appendix~\ref{app:log} applies, and $m_N\le C/\log(N+2)$ for all sufficiently large $N$.
\end{proof}

The inverse-logarithmic estimate in Theorem~\ref{thm:abstract-main} is obtained from the conditional decrease bound \eqref{eq:conditional-correlation} in Proposition~\ref{prop:energy-identity}, where the iteration-dependent denominator in \eqref{eq:conditional-exact} is replaced by the uniform upper bound $MF^2/\alpha^2$.
Because this replacement discards information about the current error magnitude, the theorem provides a general upper bound for the expected error energy $\E[\cE_N]$, while the decay observed in a particular problem may be faster.

\subsection{Finite-dimensional solution families}

Suppose that the solutions $u(\xi)$ of the variational problem \eqref{eq:pvp} belong to a fixed finite-dimensional subspace of $\V$ for $\mu$-almost every $\xi$.
Since $u_0=0$ and $g_N=u(\Xi_N)-u_{N-1}(\Xi_N)$ by \eqref{eq:direction-as-error} in Lemma~\ref{lem:basic-error-relations}, the directions and their linear combinations $u_N(\xi)$ remain in this subspace by induction for $\Pp$-almost every sampling history.
The errors $R^{(N)}=u-u_N$ in \eqref{eq:error-definition} therefore also take values in this subspace for $\mu$-almost every parameter, so Lemma~\ref{lem:finite-correlation} applies with a fixed dimension bound and no approximation tail.
This setting permits an explicit $O(N^{-1})$ bound for $\E[\cE_N]$, the expectation over the sampling history of the parameter-averaged error energy $\cE_N$ in \eqref{eq:error-energy-definitions}, as stated in the following theorem.

\begin{theorem}
\label{thm:finite-dimensional}
Let $\{u_N\}_{N\ge0}$ be generated by Algorithm~\ref{alg:VS}, and let $R^{(N)}$ and $\cE_N$ be defined by \eqref{eq:error-definition} and \eqref{eq:error-energy-definitions}.
Suppose that there is a fixed subspace $W\subset\V$ such that
\[
\dim W=r,\quad 1\le r<\infty
\quad\mbox{and}\quad u(\xi)\in W
\]
for $\mu$-almost every $\xi$.
Then
\[
\cE_N\to0
\qquad \Pp\text{-a.s.}
\]
If $\cE_0>0$, then
\begin{equation}
\E[\cE_N]
\le
\frac{1}{\cE_0^{-1}+\tfrac{\alpha^2}{MrF^2}N}.
\label{eq:finite-rate}
\end{equation}
\end{theorem}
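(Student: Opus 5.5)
We first place the errors in $W$ and then run the one-step recursion. Since $u(\xi)\in W$ for $\mu$-a.e.\ $\xi$ and $g_N=R^{(N-1)}(\Xi_N)=u(\Xi_N)-u_{N-1}(\Xi_N)$ by \eqref{eq:direction-as-error}, induction on $N$ shows that $g_N\in W$ whenever $\Xi_N$ lies in the full-measure set where $u\in W$. The countable intersection of these events has $\Pp$-probability one. On it, $u_N(\xi)\in G_N\subset W$ for every $\xi$, so $R^{(N)}(\xi)\in W$ for $\mu$-a.e.\ $\xi$. Redefining $R^{(N)}$ on a $\mu$-null set changes neither $\cE_N$ nor $J_N$.

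Next we lower-bound the correlation. $R^{(N)}$ is strongly measurable by Proposition~\ref{prop:iteration-measurability} and bounded by \eqref{eq:error-uniform-bound}, so it lies in $L^2(\Omega,\mu;W)$. Lemma~\ref{lem:finite-correlation} with $Z=W$ and $d_Z=r$ then gives $J_N\ge \cE_N^2/r$ with no tail term; this is the special case $Z_N=W$, $\tau_N=0$ of Proposition~\ref{prop:tail-or-decrease}, but without the loss in constants. Combining with \eqref{eq:conditional-correlation} yields, almost surely,
\[
\E[\cE_N-\cE_{N+1}\mid\F_N]\ge \frac{\alpha^2}{MF^2 r}\,\cE_N^2=:\kappa\,\cE_N^2 .
\]
Almost-sure convergence now follows from Theorem~\ref{thm:abstract-main} with $Z_N=W$, $D_N=r$, $\tau_N=0$: the projection onto $W$ is deterministic, hence satisfies \eqref{eq:projection-measurability}, and \eqref{eq:abstract-convergence-assumptions} holds trivially. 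Alternatively, one can repeat the summation argument directly.

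For the rate, we take expectations with $m_N:=\E[\cE_N]$ and use Jensen's inequality $\E[\cE_N^2]\ge m_N^2$ to get $m_{N+1}\le m_N-\kappa m_N^2$. Here $m_0=\cE_0$, since $\cE_0$ is deterministic. If some $m_K=0$, monotonicity gives $m_N=0$ for all $N\ge K$, and \eqref{eq:finite-rate} is trivial. Otherwise $0<m_{N+1}\le m_N$, so dividing by $m_Nm_{N+1}$ gives
\[
\frac1{m_{N+1}}-\frac1{m_N}\ge \kappa\frac{m_N}{m_{N+1}}\ge\kappa .
\]
Telescoping yields $1/m_N\ge \cE_0^{-1}+\kappa N$, which is \eqref{eq:finite-rate}.

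The main obstacle is the first step: making rigorous that the errors stay in $W$ only almost everywhere in $\xi$ and almost surely in the sampling history, so that Lemma~\ref{lem:finite-correlation} applies. This is handled by the modification on null sets described above. The nonlinear recursion step is routine once Jensen's inequality is applied in the correct direction.
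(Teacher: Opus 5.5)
Your proposal is correct and follows the paper's proof in all essentials. The shared steps are: confining the iterates to $W$ on a full-probability event, modifying $R^{(N)}$ on a $\mu$-null set, applying Lemma~\ref{lem:finite-correlation} with $d_Z=r$, then using Jensen and a reciprocal telescoping. Dividing by $m_Nm_{N+1}$ is an equivalent substitute for the paper's $(1-t)^{-1}\ge1+t$, but when some $m_K=0$ you still need the telescoping for $N<K$; the paper handles this by working up to the first vanishing index.

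The only real deviation is how you get almost-sure convergence. You obtain $\cE_N\to0$ almost surely from Theorem~\ref{thm:abstract-main} with $Z_N=W$, $D_N=r$, $\tau_N=0$, which is legitimate. The paper deduces it more cheaply from the rate, since the monotone limit $\cE_*$ satisfies $\E[\cE_*]=\lim_N\E[\cE_N]=0$.
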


\begin{proof}
To apply Lemma~\ref{lem:finite-correlation} with the fixed space $W$, we first identify sampling histories along which every iterate remains in that space.
Choose a measurable set $\Omega_0\subset\Omega$ such that $\mu(\Omega_0)=1$ and $u(\xi)\in W$ for every $\xi\in\Omega_0$.
Since each $\Xi_N$ has distribution $\mu$ and there are countably many samples, $\Pp(\Xi_N\in\Omega_0,\ \forall N\ge1)=1$.
On this event, $u_0=0$ belongs to $W$, and each step preserves membership in $W$.
Indeed, if $g_1,\ldots,g_{N-1}\in W$, then their linear combination $u_{N-1}(\xi)$ in \eqref{eq:approximate-solution} belongs to $W$ for every $\xi\in\Omega$.
Since $\Xi_N\in\Omega_0$, the direction $g_N=u(\Xi_N)-u_{N-1}(\Xi_N)$ also belongs to $W$ by \eqref{eq:direction-as-error} in Lemma~\ref{lem:basic-error-relations}.
Thus $g_N\in W$, $G_N\subset W$, and $u_N(\xi)\in W$ for every $N\ge1$ and every $\xi\in\Omega$ by induction.
Consequently, $R^{(N)}(\xi)\in W$ for every $N$ and $\mu$-almost every $\xi$, so we may redefine the error to be zero on $\Omega\setminus\Omega_0$ to make it $W$-valued without changing $J_N$ or $\cE_N$.
The correlation satisfies $J_N\ge\cE_N^2/r$ by Lemma~\ref{lem:finite-correlation}, so the conditional decrease estimate \eqref{eq:conditional-correlation} in Proposition~\ref{prop:energy-identity} takes the form
\begin{equation}
\E[\cE_{N+1}\mid\F_N]
\le
\cE_N-\frac{\alpha^2}{MrF^2}\cE_N^2.
\label{eq:finite-dimensional-conditional}
\end{equation}

To convert this conditional inequality into a recursion for the expectation of the parameter-averaged error energy, let $m_N:=\E[\cE_N]$ and $c_r:=\alpha^2/(MrF^2)$.
The unconditional expectation of \eqref{eq:finite-dimensional-conditional} satisfies
\begin{equation}
m_{N+1}
\le
m_N-c_r\E[\cE_N^2]
\le
m_N-c_rm_N^2.
\label{eq:finite-dimensional-recursion}
\end{equation}
Here $\E[\cE_N^2]\ge m_N^2$ by Jensen's inequality, and $m_0=\cE_0$ because $\cE_0$ is deterministic.
If $m_0=0$, then $0\le\cE_N\le\cE_0=0$ along every sampling history by Proposition~\ref{prop:energy-identity}, so $\cE_N=0$ for every $N$.
For $m_0>0$, it suffices to prove \eqref{eq:finite-rate} up to the first index at which $m_N$ vanishes, if such an index exists, because all subsequent terms are then zero.
At every step with $m_{N+1}>0$, the recursion \eqref{eq:finite-dimensional-recursion} requires $0<m_{N+1}\le m_N(1-c_rm_N)$ and hence $0<c_rm_N<1$.
The reciprocals are therefore well defined, and $(1-t)^{-1}\ge1+t$ for $0\le t<1$ shows that
\[
\begin{array}{ll}
\frac1{m_{N+1}}
\ge
\frac1{m_N(1-c_rm_N)}
\ge
\frac1{m_N}+c_r.
\end{array}
\]
Thus $1/m_N$ increases by at least $c_r$ at each step for which $m_N>0$, so $1/m_N\ge1/m_0+c_rN$ and \eqref{eq:finite-rate} holds.
The estimate \eqref{eq:finite-rate} also holds when $m_N=0$.

By Proposition~\ref{prop:energy-identity}, $0\le\cE_{N+1}\le\cE_N$, so the sequence has an almost-sure limit $\cE_*\ge0$.
Because $0\le\cE_N\le\cE_0$, dominated convergence and the expectation bound \eqref{eq:finite-rate} ensure $\E[\cE_*]=\lim_{N\to\infty}\E[\cE_N]=0$.
The nonnegative random variable $\cE_*$ is therefore zero almost surely.
\end{proof}

\section{Problems with separable parameter dependence}
\label{sec:finite-term}

To verify the finite-dimensional error approximation conditions of Theorem~\ref{thm:abstract-main} for problems with finite-term separated bilinear forms and right-hand sides, we first construct deterministic spaces that approximate the solutions $u(\xi)$ of \eqref{eq:pvp}, with explicit dimension and error bounds.
Since $u_N(\xi)$ belongs to the generated direction space $G_N$ in \eqref{eq:generated-direction-space}, these spaces enlarged by $G_N$ approximate the errors $R^{(N)}=u-u_N$ in \eqref{eq:error-definition}.
Theorem~\ref{thm:abstract-main} then ensures almost-sure convergence to zero of the parameter-averaged error energy $\cE_N$ in \eqref{eq:error-energy-definitions} and bounds $\E[\cE_N]$.

\subsection{Approximation of the solution family}

Throughout this section, assume that the bilinear form $a(\cdot,\cdot;\xi)$ in the variational problem \eqref{eq:pvp} admits, for every $\xi\in\Omega$ and all $v,w\in\V$, the representation
\begin{equation}
a(v,w;\xi)=a_0(v,w)+\compactsum_{q=1}^{Q}\theta_q(\xi)a_q(v,w)
\label{eq:separable-a}
\end{equation}
and that the right-hand side $f(\xi)\in\Vp$ in \eqref{eq:pvp} admits the representation
\begin{equation}
f(\xi)=\compactsum_{\ell=0}^{L}\phi_\ell(\xi)F_\ell.
\label{eq:separable-f}
\end{equation}
Here $Q\ge1$ and $L\ge0$ are integers, $\theta_q:\Omega\to\RR$ for $q=1,\ldots,Q$ are scalar coefficient functions, $F_\ell\in\Vp$ for $\ell=0,\ldots,L$, and each $\phi_\ell:\Omega\to\RR$ is bounded and Borel measurable.
The forms $a_0,\ldots,a_Q$ are parameter-independent bilinear forms on $\V\times\V$, with $a_q$ bounded for $q=1,\ldots,Q$.
To express the variational problem in operator form, for each $\xi\in\Omega$ define $A(\xi):\V\to\Vp$ and, for $q=1,\ldots,Q$, define $A_q:\V\to\Vp$ by
\begin{equation}
\langle A(\xi)v,w\rangle=a(v,w;\xi)
\quad\mbox{and}\quad
\langle A_qv,w\rangle=a_q(v,w).
\label{eq:operator-definitions}
\end{equation}

The operators $A(\xi)$ in \eqref{eq:operator-definitions} lie in a finite-dimensional affine space by \eqref{eq:separable-a} and satisfy $\|A(\xi)\|_{\mathcal L(\V,\Vp)}\le M$ for every $\xi\in\Omega$ by \eqref{eq:bounded}.
This bounded family has a finite cover by operator-norm balls of radius $\alpha/2$ centered at operators from the same family.
Since the covering radius is $\alpha/2$ and $\|A(\xi)^{-1}\|_{\mathcal L(\Vp,\V)}\le1/\alpha$ by \eqref{eq:coercive}, the local Neumann expansions converge about these reference operators.
Their truncations, applied to $f(\xi)$ in \eqref{eq:separable-f}, generate the approximation spaces below.

\begin{proposition}
\label{prop:solution-approx}
Let $u(\xi)$ be the solution of \eqref{eq:pvp}, and let $F$ be defined by \eqref{eq:fbound}.
There is an integer $K\ge1$, independent of the truncation degree and the sampling history, such that, for every integer $p\ge0$, there is a finite-dimensional space $W_p\subset\V$, independent of the sampling history, satisfying
\begin{equation}
\begin{cases}
\dim W_p\le K(L+1)\binom{Q+p}{p},
\\[2mm]
\sup_{\xi\in\Omega}\operatorname{dist}_{\V}(u(\xi),W_p)
\le\frac{F}{\alpha}2^{-p}.
\end{cases}
\label{eq:solution-space-approximation}
\end{equation}
\end{proposition}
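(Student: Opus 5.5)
The plan follows the covering-and-Neumann-expansion idea sketched just before the statement. First we record that the operator family $\mathcal A:=\{A(\xi):\xi\in\Omega\}$ is bounded by $M$ in $\mathcal L(\V,\Vp)$. It also lies in the affine space $A_0+\spanop\{A_1,\ldots,A_Q\}$, where $A_0$ is defined from $a_0$ as in \eqref{eq:operator-definitions}; this is finite-dimensional, and $A_0$ is bounded because $A_0=A(\xi)-\sum_q\theta_q(\xi)A_q$ for any fixed $\xi$. A bounded subset of a finite-dimensional normed space is totally bounded. Hence there are finitely many parameters $\xi_1,\ldots,\xi_K$ with $\mathcal A\subset\bigcup_k B(A(\xi_k),\alpha/2)$; centring at members of the family is possible by the usual radius-halving trick. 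The integer $K$ depends only on the problem data, not on $p$ or the sampling history. For each $\xi$ we fix an index $k(\xi)$ with $\|A(\xi)-A(\xi_{k})\|\le\alpha/2$.

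For the local expansion, write $A_k:=A(\xi_k)$ and $\Delta:=A(\xi)-A_k=\sum_{q=1}^Q(\theta_q(\xi)-\theta_q(\xi_k))A_q$. Since $\|A_k^{-1}\|\le1/\alpha$ by \eqref{eq:coercive}, we have $\|A_k^{-1}\Delta\|_{\mathcal L(\V)}\le1/2$, and the Neumann series gives
\[
u(\xi)=A(\xi)^{-1}f(\xi)=\compactsum_{j=0}^{\infty}(-A_k^{-1}\Delta)^jA_k^{-1}f(\xi).
\]
We then define $W_p$ as the span over $k=1,\ldots,K$, $\ell=0,\ldots,L$, and words of degree at most $p$ of the vectors obtained by applying products of the operators $T_{k,q}:=A_k^{-1}A_q$ to $A_k^{-1}F_\ell$. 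The coefficients in $(A_k^{-1}\Delta)^j=\bigl(\sum_q c_qT_{k,q}\bigr)^j$ are scalars, but the $T_{k,q}$ need not commute. A naive count over noncommutative words would give $\sum_{j\le p}Q^j$ rather than $\binom{Q+p}{p}$.

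This counting is the main obstacle. To reach the stated bound, we would use the fact that $\bigl(\sum_qc_qT_q\bigr)^j x$, as $c$ ranges over $\RR^Q$, lies in the span of the symmetrized products $\sum_{\sigma}T_{q_{\sigma(1)}}\cdots T_{q_{\sigma(j)}}x$, one for each multiset $\{q_1,\ldots,q_j\}$. Indeed, $(\sum_q c_qT_q)^j$ is a homogeneous polynomial in $c$ whose coefficient of each monomial $c^\beta$ is exactly such a symmetrized sum. The number of multisets of size $j\le p$ from $Q$ symbols totals $\binom{Q+p}{p}$. So we define $W_p$ as the span of these symmetrized vectors applied to $A_k^{-1}F_\ell$, which yields $\dim W_p\le K(L+1)\binom{Q+p}{p}$.

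For the error bound, the degree-$\le p$ truncation $u_p(\xi):=\sum_{j=0}^p(-A_k^{-1}\Delta)^jA_k^{-1}f(\xi)$ lies in $W_p$, using the linearity of \eqref{eq:separable-f} in $\ell$. The tail is controlled as follows. Set $T:=-A_k^{-1}\Delta$, so that $u(\xi)-u_p(\xi)=T^{p+1}(I-T)^{-1}A_k^{-1}f(\xi)=T^{p+1}u(\xi)$, because $(I-T)^{-1}A_k^{-1}=A(\xi)^{-1}$. Therefore
\[
\dist_\V(u(\xi),W_p)\le\|T\|^{p+1}\|u(\xi)\|_\V\le2^{-(p+1)}F/\alpha\le2^{-p}F/\alpha,
\]
using \eqref{eq:solution-uniform-bound}. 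This bound is uniform in $\xi$, which gives \eqref{eq:solution-space-approximation}.
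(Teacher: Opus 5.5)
Your proposal is correct and follows the paper's proof. You cover the operator family by $\alpha/2$-balls centred at members of the family, expand locally in a Neumann series about each centre, and count symmetrized word sums indexed by multisets, which is exactly the paper's $v_{\zeta,\nu,\ell}$ construction. The differences are cosmetic: you work in the affine span through $A_0$ (correctly checking that $A_0$ is bounded) rather than shifting by $A(\xi_0)$, and your identity $u(\xi)-u_p(\xi)=T^{p+1}u(\xi)$ gives the slightly sharper factor $2^{-(p+1)}$ in place of the paper's geometric-series bound $2^{-p}$.
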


\begin{proof}
First choose the reference operators needed for the local expansions.
Fix $\xi_0\in\Omega$ and set $\mathcal X:=\operatorname{span}\{A_1,\ldots,A_Q\}\subset\mathcal L(\V,\Vp)$.
The space $\mathcal X$ is finite-dimensional because $Q$ is finite, and the representation of each shifted operator in $\mathcal X$ reads, by \eqref{eq:separable-a},
\[
\begin{array}{ll}
A(\xi)-A(\xi_0)
=
\compactsum_{q=1}^{Q}
\bigl(\theta_q(\xi)-\theta_q(\xi_0)\bigr)A_q
\in\mathcal X
\qquad \forall\,\xi\in\Omega.
\end{array}
\]
The bound $\|A(\xi)\|_{\mathcal L(\V,\Vp)}\le M$ from \eqref{eq:bounded} holds for every $\xi\in\Omega$, so $\|A(\xi)-A(\xi_0)\|_{\mathcal L(\V,\Vp)}\le2M$.
Hence the shifted family $\mathcal S:=\{A(\xi)-A(\xi_0):\xi\in\Omega\}$ is bounded in $\mathcal X$ and has a finite cover by balls of radius $\alpha/4$.
To replace centers that need not lie in $\mathcal S$, select an element $A(\zeta^{(j)})-A(\xi_0)$ of $\mathcal S$ from each covering ball that intersects it, with $\zeta^{(1)},\ldots,\zeta^{(K)}\in\Omega$.
For any $\xi\in\Omega$, its shifted operator and the selected representative of a ball containing it are both within $\alpha/4$ of that ball's center.
Thus
\begin{equation}
\begin{array}{ll}
\|A(\xi)-A(\zeta^{(j)})\|_{\mathcal L(\V,\Vp)}
&=
\|
[A(\xi)-A(\xi_0)]
-
[A(\zeta^{(j)})-A(\xi_0)]
\|_{\mathcal L(\V,\Vp)}
\\[2mm]
&\le
\frac{\alpha}{4}
+
\frac{\alpha}{4}
=
\frac{\alpha}{2}.
\end{array}
\label{eq:operator-cover}
\end{equation}
The reference parameters and their number $K$ are fixed independently of $p$ and the sampling history.

For fixed $\zeta\in\Omega$, the equation $A(\zeta)v=h$ has a unique solution $v\in\V$ for every $h\in\Vp$ by the Lax--Milgram theorem for the bounded coercive form $a(\cdot,\cdot;\zeta)$, so $A(\zeta):\V\to\Vp$ is bijective.
Moreover, $\alpha\|v\|_{\V}\le\|A(\zeta)v\|_{\Vp}$ for every $v\in\V$ by \eqref{eq:coercive}, and hence $A(\zeta)$ is an isomorphism with $\|A(\zeta)^{-1}\|_{\mathcal L(\Vp,\V)}\le1/\alpha$.
Define the perturbation operator $T_\zeta(\xi):=A(\zeta)^{-1}\bigl(A(\xi)-A(\zeta)\bigr)$.
For any $\xi\in\Omega$ satisfying
\begin{equation}
\|A(\xi)-A(\zeta)\|_{\mathcal L(\V,\Vp)}
\le\frac{\alpha}{2},
\label{eq:local-operator-closeness}
\end{equation}
the inverse bound gives
\[
\begin{array}{ll}
\|T_\zeta(\xi)\|_{\mathcal L(\V)}
&\le
\|A(\zeta)^{-1}\|_{\mathcal L(\Vp,\V)}
\|A(\xi)-A(\zeta)\|_{\mathcal L(\V,\Vp)}
\le
\frac1\alpha\frac{\alpha}{2}
=
\frac12.
\end{array}
\]
Since $A(\xi)-A(\zeta)=A(\zeta)T_\zeta(\xi)$, the operator factors as $A(\xi)=A(\zeta)(I+T_\zeta(\xi))$, where $\|T_\zeta(\xi)\|_{\mathcal L(\V)}\le1/2$ ensures convergence of the Neumann series for $(I+T_\zeta(\xi))^{-1}$ in operator norm.
For every integer $p\ge0$, the first $p+1$ terms and tail of this series are
\[
\begin{array}{ll}
(I+T_\zeta(\xi))^{-1}
=
\compactsum_{k=0}^{p}(-T_\zeta(\xi))^k
+
\compactsum_{k=p+1}^{\infty}(-T_\zeta(\xi))^k.
\end{array}
\]
The tail $\mathcal R_{\zeta,p}(\xi):=\compactsum_{k=p+1}^{\infty}(-T_\zeta(\xi))^k$ satisfies the geometric bound
\[
\begin{array}{ll}
\|\mathcal R_{\zeta,p}(\xi)\|_{\mathcal L(\V)}
\le
\compactsum_{k=p+1}^{\infty}2^{-k}
=
2^{-p}.
\end{array}
\]
After multiplication of the expansion of $(I+T_\zeta(\xi))^{-1}$ on the right by $A(\zeta)^{-1}$, the local inverse has the representation
\begin{equation}
A(\xi)^{-1}
=
\bigl[
\compactsum_{k=0}^{p}(-T_\zeta(\xi))^k
+\mathcal R_{\zeta,p}(\xi)
\bigr]A(\zeta)^{-1}
\quad\mbox{and}\quad
\|\mathcal R_{\zeta,p}(\xi)\|_{\mathcal L(\V)}
\le2^{-p}.
\label{eq:local-inverse-expansion}
\end{equation}

To approximate the solution family by truncating the local expansions of $A(\xi)^{-1}$ in \eqref{eq:local-inverse-expansion}, fix an integer $p\ge0$.
The covering estimate \eqref{eq:operator-cover} ensures that, for every $\xi\in\Omega$, the reference parameters $\zeta^{(1)},\ldots,\zeta^{(K)}$ chosen above include at least one $\zeta^{(j)}$ such that
\begin{equation}
\|A(\xi)-A(\zeta^{(j)})\|_{\mathcal L(\V,\Vp)}
\le\frac{\alpha}{2}.
\label{eq:operator-cover-selection}
\end{equation}
Choose such a reference parameter $\zeta=\zeta^{(j)}$ and apply the expansion \eqref{eq:local-inverse-expansion} to $f(\xi)$ to obtain
\begin{equation}
u(\xi)
=
\compactsum_{k=0}^{p}
(-T_\zeta(\xi))^k
A(\zeta)^{-1}f(\xi)
+
e_{\zeta,p}(\xi),
\label{eq:local-neumann}
\end{equation}
where $e_{\zeta,p}(\xi)=\mathcal R_{\zeta,p}(\xi)A(\zeta)^{-1}f(\xi)\in\V$ is the vector remainder induced by the operator remainder in \eqref{eq:local-inverse-expansion}.
Since $\|A(\zeta)^{-1}\|_{\mathcal L(\Vp,\V)}\le1/\alpha$ and $\|f(\xi)\|_{\Vp}\le F$ by \eqref{eq:fbound}, the vector $A(\zeta)^{-1}f(\xi)$ has norm at most $F/\alpha$.
The solution error $\mathcal R_{\zeta,p}(\xi)A(\zeta)^{-1}f(\xi)$ is therefore bounded using $\|\mathcal R_{\zeta,p}(\xi)\|_{\mathcal L(\V)}\le2^{-p}$ from \eqref{eq:local-inverse-expansion} as
\begin{equation}
\|e_{\zeta,p}(\xi)\|_{\V}
\le\|\mathcal R_{\zeta,p}(\xi)\|_{\mathcal L(\V)}
\|A(\zeta)^{-1}f(\xi)\|_{\V}
\le\frac{F}{\alpha}2^{-p}.
\label{eq:local-tail-bound}
\end{equation}
For fixed $\zeta$, the truncated sums in \eqref{eq:local-neumann} lie in a finite-dimensional space determined by their spatial components.
Define the bounded operators $B_q^{(\zeta)}:=A(\zeta)^{-1}A_q$ on $\V$ and the vectors $b_\ell^{(\zeta)}:=A(\zeta)^{-1}F_\ell$ in $\V$.
The separated representations \eqref{eq:separable-a} and \eqref{eq:separable-f} express the perturbation and the transformed right-hand side as
\[
\begin{cases}
T_\zeta(\xi)
=
\compactsum_{q=1}^{Q}
\bigl(\theta_q(\xi)-\theta_q(\zeta)\bigr)
B_q^{(\zeta)},\\[2mm]
A(\zeta)^{-1}f(\xi)
=
\compactsum_{\ell=0}^{L}
\phi_\ell(\xi)b_\ell^{(\zeta)}.
\end{cases}
\]
To collect the operator products in the truncated sum, let $\nu=(\nu_1,\ldots,\nu_Q)\in\{0,1,\ldots\}^Q$ record their factor multiplicities and set $|\nu|:=\nu_1+\cdots+\nu_Q$.
For $\ell=0,\ldots,L$ and $|\nu|\ge1$, let $v_{\zeta,\nu,\ell}$ be the sum of $B_{i_1}^{(\zeta)}\cdots B_{i_{|\nu|}}^{(\zeta)}b_\ell^{(\zeta)}$ over all index sequences $(i_1,\ldots,i_{|\nu|})$ in which each $q\in\{1,\ldots,Q\}$ occurs exactly $\nu_q$ times, and set $v_{\zeta,0,\ell}:=b_\ell^{(\zeta)}$ for the term with no operator factors.
The expansion of the truncated sum in \eqref{eq:local-neumann}, grouped by the multiplicities $\nu$ of the scalar factors, belongs to $W_{\zeta,p}:=\operatorname{span}\{v_{\zeta,\nu,\ell}:|\nu|\le p,\ 0\le\ell\le L\}$, as detailed in Appendix~\ref{app:neumann}.
Every operator product retains its order, so no commutativity of the $B_q^{(\zeta)}$ is required.
The vectors $v_{\zeta,\nu,\ell}$ are independent of $\xi$ and indexed by $\compactsum_{k=0}^{p}\binom{Q+k-1}{k}=\binom{Q+p}{p}$ possible $Q$-dimensional multi-indices with $|\nu|\le p$ and $L+1$ choices of $\ell$, so
\begin{equation*}
\begin{array}{ll}
\dim W_{\zeta,p}
\le
(L+1)\binom{Q+p}{p}.
\end{array}
\end{equation*}

The space $W_p:=\compactsum_{j=1}^{K}W_{\zeta^{(j)},p}$ combines the local spaces to approximate $u(\xi)$ for every $\xi\in\Omega$.
Since $W_p$ depends only on the fixed reference parameters and the problem data, it is independent of the sampling history, and its dimension is bounded by the sum of the local dimensions, so
\[
\begin{array}{ll}
\dim W_p
\le
\compactsum_{j=1}^{K}\dim W_{\zeta^{(j)},p}
\le
K(L+1)\binom{Q+p}{p},
\end{array}
\]
which proves the dimension bound in \eqref{eq:solution-space-approximation}.
For every $\xi\in\Omega$, a reference parameter $\zeta^{(j)}$ satisfying \eqref{eq:operator-cover-selection} supplies a truncated sum in \eqref{eq:local-neumann} that belongs to $W_{\zeta^{(j)},p}\subset W_p$.
Its remainder satisfies \eqref{eq:local-tail-bound}, so $\operatorname{dist}_{\V}(u(\xi),W_p)\le(F/\alpha)2^{-p}$ uniformly in $\xi$.
The supremum over $\Omega$ proves the approximation estimate in \eqref{eq:solution-space-approximation}.
\end{proof}

\subsection{Convergence of the VS iteration}

The deterministic spaces $W_p$ from Proposition~\ref{prop:solution-approx} approximate the solutions $u(\xi)$ of \eqref{eq:pvp}, while Theorem~\ref{thm:abstract-main} concerns the errors $R^{(N)}$ in \eqref{eq:error-definition}.
Since $u_N(\xi)$ belongs to $G_N$ in \eqref{eq:generated-direction-space}, every $w\in W_p$ determines an approximation $w-u_N(\xi)\in W_p+G_N$ with $R^{(N)}(\xi)-(w-u_N(\xi))=u(\xi)-w$.
Thus $\operatorname{dist}_{\V}(R^{(N)}(\xi),W_p+G_N)\le\operatorname{dist}_{\V}(u(\xi),W_p)$, and a logarithmically growing degree $p$ meets both the qualitative and quantitative conditions of Theorem~\ref{thm:abstract-main} for $\cE_N\to0$ almost surely and a bound on $\E[\cE_N]$.

\begin{theorem}
\label{thm:finite-term-main}
Let $\{u_N\}_{N\ge0}$ be generated by Algorithm~\ref{alg:VS}, and let $R^{(N)}$ and $\cE_N$ be defined by \eqref{eq:error-definition} and \eqref{eq:error-energy-definitions}.
Then
\begin{equation*}
\begin{array}{ll}
\cE_N\to0
\qquad \Pp\text{-a.s.}
\end{array}
\end{equation*}
Moreover, there is a constant $C>0$, independent of $N$, such that
\begin{equation}
\E[\cE_N]
\le
\tfrac{C}{\log(N+2)}
\label{eq:finite-term-rate}
\end{equation}
for all sufficiently large $N$.
\end{theorem}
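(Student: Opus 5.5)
We will deduce Theorem~\ref{thm:finite-term-main} from Theorem~\ref{thm:abstract-main} by taking $Z_N:=W_{p_N}+G_N$, where $W_p$ is the deterministic space of Proposition~\ref{prop:solution-approx}, $G_N$ is the generated direction space \eqref{eq:generated-direction-space}, and $p_N$ is a deterministic degree growing logarithmically in $N$.

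\textbf{Approximation bounds.} As noted before the statement, $u_N(\xi)\in G_N$. Hence $\dist_{\V}(R^{(N)}(\xi),Z_N)\le\dist_{\V}(u(\xi),W_{p_N})\le (F/\alpha)2^{-p_N}$ for every $\xi$, so we may take
\[
\tau_N:=\tfrac{F}{\alpha}2^{-p_N}
\quad\mbox{and}\quad
D_N:=K(L+1)\tbinom{Q+p_N}{p_N}+N,
\]
using $\dim G_N\le N$. Both are deterministic, and $D_N\ge1$.

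\textbf{Measurability.} We must check \eqref{eq:projection-measurability}, i.e.\ that $\omega\mapsto P_N(\omega)v$ is $\F_N$-measurable. The directions $g_1,\dots,g_N$ are $\F_N$-measurable $\V$-valued maps by Proposition~\ref{prop:iteration-measurability}, and $W_{p_N}$ is fixed. Fix an orthonormal basis $w_1,\dots,w_m$ of $W_{p_N}$ and run Gram--Schmidt on $g_1,\dots,g_N$ relative to the current span, discarding a vector whenever its residual norm vanishes. On each of the finitely many measurable events defining which vectors survive, every step is a continuous operation, so the resulting orthonormal vectors are measurable. Then $P_Nv=\sum_i (v,e_i)_{\V}e_i$ is $\F_N$-measurable. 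This bookkeeping is the only step that is not a direct substitution, so I expect it to be the main technical obstacle, though it is routine.

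\textbf{Choice of degree.} Since $\binom{Q+p}{p}\le (p+1)^Q$, we have $D_N\le K(L+1)(p_N+1)^Q+N$ and $D_N\tau_N^2\le (F/\alpha)^2 4^{-p_N}\bigl(K(L+1)(p_N+1)^Q+N\bigr)$. Choose
\[
p_N:=\lceil \log_2(N+1)\rceil+\lceil Q\log_2\log_2(N+2)\rceil+c,
\]
or, more simply, $p_N:=\lceil \beta\log_2(N+2)\rceil$ for a fixed $\beta>1$, say $\beta=2$. With $\beta=2$ we get $4^{-p_N}\le (N+2)^{-4}$ and $(p_N+1)^Q=O((\log N)^Q)$. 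Therefore $D_N=N+O((\log N)^Q)\le C_\gamma(N+1)$, and $C_0D_N\tau_N^2=O((N+1)^{-3})\le B/(N+1)$ for large $N$. This verifies \eqref{eq:abstract-rate-assumptions}. It also gives $D_N\tau_N^2\to0$ and $\sum_N 1/D_N\ge\sum_N 1/(C_\gamma(N+1))=\infty$, which verifies \eqref{eq:abstract-convergence-assumptions}.

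\textbf{Conclusion.} Theorem~\ref{thm:abstract-main} now yields both the almost-sure convergence $\cE_N\to0$ and the bound \eqref{eq:finite-term-rate}.
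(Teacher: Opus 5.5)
Your proposal is correct and follows the paper's proof essentially step for step: the same space $Z_N=W_{p_N}+G_N$, the same $D_N=N+K(L+1)\binom{Q+p_N}{p_N}$ and $\tau_N=(F/\alpha)2^{-p_N}$, measurability of $P_N$ via a measurable Gram--Schmidt construction, and a reduction to Theorem~\ref{thm:abstract-main}. The only difference is the degree choice: the paper takes $p_N=\lceil\log_2(N+2)\rceil$ (your $\beta=1$), which already gives $D_N\tau_N^2=O\bigl((N+1)/(N+2)^2\bigr)=O\bigl(1/(N+1)\bigr)$, so your $\beta=2$ works but is more than is needed.
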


\begin{proof}
We verify the projection measurability condition \eqref{eq:projection-measurability} and the approximation bounds \eqref{eq:abstract-approx} in Theorem~\ref{thm:abstract-main} by adjoining the generated direction space $G_N$ in \eqref{eq:generated-direction-space} to the deterministic approximation spaces $W_p$ from Proposition~\ref{prop:solution-approx}.
For every $N\ge0$, set
\begin{equation}
p_N:=\lceil\log_2(N+2)\rceil,
\label{eq:pN-choice}
\end{equation}
let $W_{p_N}$ be the space supplied by Proposition~\ref{prop:solution-approx}, and define
\[
\begin{array}{ll}
Z_N:=W_{p_N}+G_N,
\quad
D_N:=N+K(L+1)\binom{Q+p_N}{p_N}
\quad\mbox{and}\quad 
\tau_N:=\frac{F}{\alpha}2^{-p_N}.
\end{array}
\]
Here $K$ is the fixed integer in Proposition~\ref{prop:solution-approx}, and $G_N$ is the generated direction space in \eqref{eq:generated-direction-space}.
Thus $D_N$ is a deterministic positive integer and $\tau_N$ is deterministic and nonnegative.
A fixed basis of $W_{p_N}$ followed by $g_1,\ldots,g_N$ forms an $\F_N$-measurable spanning list for $Z_N$, since the directions are measurable by Proposition~\ref{prop:iteration-measurability}.
For a nonempty list, Lemma~\ref{lem:measurable-GS} constructs an orthogonal projection $P_N$ onto $Z_N$ satisfying \eqref{eq:projection-measurability} by Gram--Schmidt, as detailed in Appendix~\ref{app:measurability}, while an empty list corresponds to $Z_N=\{0\}$ and the zero projection.
Since $\dim G_N\le N$, the dimension of $Z_N$ satisfies, by Proposition~\ref{prop:solution-approx},
\[
\begin{array}{ll}
\dim Z_N
\le
N+K(L+1)\binom{Q+p_N}{p_N}
=D_N.
\end{array}
\]
For each $\xi\in\Omega$, choose $w_N(\xi)\in W_{p_N}$ with $\|u(\xi)-w_N(\xi)\|_{\V}\le\tau_N$ by \eqref{eq:solution-space-approximation} in Proposition~\ref{prop:solution-approx}.
Because $u_N(\xi)\in G_N$, the vector $w_N(\xi)-u_N(\xi)$ belongs to $Z_N=W_{p_N}+G_N$ and bounds the approximation error of $R^{(N)}(\xi)$ through
\[
\begin{array}{ll}
\operatorname{dist}_{\V}(R^{(N)}(\xi),Z_N)
&\le\|R^{(N)}(\xi)-[w_N(\xi)-u_N(\xi)]\|_{\V}\\[2mm]
&=\|u(\xi)-w_N(\xi)\|_{\V}\le\tau_N.
\end{array}
\]
This estimate holds for every $\xi\in\Omega$, so $\sup_{\xi\in\Omega}\operatorname{dist}_{\V}(R^{(N)}(\xi),Z_N)\le\tau_N$.
The dimension and error bounds verify \eqref{eq:abstract-approx}.

It remains to check that the dimension bound $D_N$ and tolerance $\tau_N$ satisfy \eqref{eq:abstract-convergence-assumptions} for the degree $p_N$ in \eqref{eq:pN-choice}.
For fixed $Q$ and $p_N=O(\log N)$, the binomial coefficient in $D_N$ satisfies
\[
\begin{array}{ll}
\binom{Q+p_N}{p_N}
=
\binom{Q+p_N}{Q}
=O(p_N^Q)
=O((\log N)^Q),
\end{array}
\]
so $D_N=N+O((\log N)^Q)$ and hence $D_N$ is eventually bounded by a constant multiple of $N+1$.
At the same time, the ceiling in \eqref{eq:pN-choice} ensures that $2^{-p_N}\le2^{-\log_2(N+2)}=1/(N+2)$.
Consequently, there are constants $C_{\gamma},C_\tau>0$ such that, for all sufficiently large $N$,
\begin{equation}
D_N\le C_{\gamma}(N+1)
\quad\mbox{and}\quad
\tau_N\le\frac{C_\tau}{N+2}.
\label{eq:DN-tauN-bounds}
\end{equation}
In particular, the product $D_N\tau_N^2$ tends to zero by \eqref{eq:DN-tauN-bounds}, since
\[
\begin{array}{ll}
D_N\tau_N^2
\le
C_{\gamma}C_\tau^2\frac{N+1}{(N+2)^2}
\to0.
\end{array}
\]
The series in \eqref{eq:abstract-convergence-assumptions} diverges by comparison with the harmonic series after choosing $N_0$ so that $D_N\le C_{\gamma}(N+1)$ for every $N\ge N_0$, since
\[
\begin{array}{ll}
\compactsum_{N=0}^{\infty}\frac1{D_N}
\ge
\compactsum_{N=N_0}^{\infty}\frac1{D_N}
\ge
\frac1{C_{\gamma}}
\compactsum_{N=N_0}^{\infty}\frac1{N+1}
=
\infty.
\end{array}
\]
The almost-sure convergence assertion now follows from Theorem~\ref{thm:abstract-main}.

For the bound on $\E[\cE_N]$, it remains to verify the quantitative conditions \eqref{eq:abstract-rate-assumptions} in Theorem~\ref{thm:abstract-main}.
For the threshold constant $C_0$ from Proposition~\ref{prop:tail-or-decrease}, the product $C_0D_N\tau_N^2$ and dimension $D_N$ satisfy, for all sufficiently large $N$ and by \eqref{eq:DN-tauN-bounds},
\[
\begin{cases}
C_0D_N\tau_N^2
\le C_0C_{\gamma}C_\tau^2\frac{N+1}{(N+2)^2}
\le\frac{C_0C_{\gamma}C_\tau^2}{N+1},\\[2mm]
D_N\le C_{\gamma}(N+1).
\end{cases}
\]
Thus \eqref{eq:abstract-rate-assumptions} holds with $B:=C_0C_{\gamma}C_\tau^2$, so the expected-error assertion of Theorem~\ref{thm:abstract-main} proves \eqref{eq:finite-term-rate}.
\end{proof}

For example, the data assumptions of Theorem~\ref{thm:finite-term-main} hold for the representations \eqref{eq:separable-a}--\eqref{eq:separable-f} when $a_0,\ldots,a_Q$ are bounded bilinear forms, the coefficients $\theta_q$ are Lipschitz continuous on the compact parameter space $\Omega$, the coefficients $\phi_\ell$ are bounded and Borel measurable, and the resulting bilinear form is symmetric and uniformly coercive.
Since each $\theta_q$ is continuous on the compact set $\Omega$, it is bounded and Borel measurable.
In \eqref{eq:separable-a}--\eqref{eq:separable-f}, these functions and the bounded Borel functions $\phi_\ell$ multiply fixed bounded bilinear forms or fixed functionals.
The resulting finite sums are therefore Borel measurable as maps $\xi\mapsto a(v,w;\xi)$ for fixed $v,w\in\V$ and $\xi\mapsto f(\xi)$ into $\Vp$, and satisfy the uniform bounds \eqref{eq:bounded} and \eqref{eq:fbound} for suitable constants.
These conditions are sufficient to apply the theorem, whose proof uses the finite-dimensional structure and uniform boundedness of the operator family without requiring additional continuity of the individual $\theta_q$.

\section{Parameter-independent operators}
\label{sec:fixed-operator}

Suppose throughout this section that the bilinear form $a(\cdot,\cdot;\xi)$ in the variational problem \eqref{eq:pvp} is independent of the parameter, so that
\begin{equation}
a(v,w;\xi)=a(v,w)
\qquad
\forall\,\xi\in\Omega,
\quad \forall\,v,w\in\V.
\label{eq:fixed-operator}
\end{equation}
All parameters then share the energy inner product $(v,w)_a:=a(v,w)$ and its induced norm $\|v\|_a:=a(v,v)^{1/2}$.

Proposition~\ref{prop:fixed-orthogonal} proves that $a(u(\xi)-u_N(\xi),v)=0$ for every $v$ in the generated direction space $G_N$ in \eqref{eq:generated-direction-space}.
Since $u_N(\xi)\in G_N$, the VS approximation $u_N(\xi)$ in \eqref{eq:approximate-solution} is the $a$-orthogonal projection of the solution $u(\xi)$ of \eqref{eq:pvp} onto $G_N$ and minimizes $\|u(\xi)-v\|_a$ over $v\in G_N$.
If $u(\xi)$ lies in a fixed finite-dimensional subspace for $\mu$-almost every $\xi$, the same proposition shows that $G_N$ reaches the smallest such subspace after an almost surely finite number of samples, so that $u_N(\xi)=u(\xi)$ for $\mu$-almost every $\xi$ at every subsequent iteration.
For a separated right-hand side $f(\xi)$, Proposition~\ref{prop:fixed-separable} identifies finite-dimensional spaces containing the solutions $u(\xi)$ and the coefficient functions $\eta_N$.

\subsection{Orthogonal projection and finite-step recovery}

By Lemma~\ref{lem:basic-error-relations}, the error $R^{(N)}(\xi)$ is orthogonal to the new direction $g_N$, as stated in \eqref{eq:one-step-orthogonality}.
To identify $u_N(\xi)$ as the $a$-orthogonal projection of $u(\xi)$ onto the generated direction space $G_N$ in \eqref{eq:generated-direction-space}, i.e., $G_N=\operatorname{span}\{g_1,\ldots,g_N\}$, the error must also be orthogonal to the earlier directions.
If $a(R^{(N-1)}(\xi),g_j)=0$ for all $\xi\in\Omega$ and $j<N$, evaluation at $\xi=\Xi_N$ establishes $a(g_N,g_j)=0$ because $a$ is parameter-independent and $g_N=R^{(N-1)}(\Xi_N)$.
The correction $\eta_N(\xi)g_N$ in \eqref{eq:error-update} therefore preserves $a(R^{(N)}(\xi),g_j)=0$ for $j<N$.
The proof of Proposition~\ref{prop:fixed-orthogonal} also identifies $G_N=\operatorname{span}\{u(\Xi_1),\ldots,u(\Xi_N)\}$, which determines when $u_N(\xi)=u(\xi)$ for $\mu$-almost every $\xi$.

\begin{proposition}
\label{prop:fixed-orthogonal}
Let $\{u_N\}$, $\{g_N\}$, and $\{\eta_N\}$ be generated by Algorithm~\ref{alg:VS}, and let $R^{(N)}$ and $G_N$ be defined by \eqref{eq:error-definition} and \eqref{eq:generated-direction-space}.
Then all nonzero directions satisfy
\begin{equation}
a(g_i,g_j)=0,
\qquad i\ne j.
\label{eq:fixed-direction-orthogonality}
\end{equation}
For every $N$, every $\xi\in\Omega$, and every $j=1,\ldots,N$, we have
\begin{equation}
a(R^{(N)}(\xi),g_j)=0.
\label{eq:fixed-error-orthogonality}
\end{equation}
If $g_N\ne0$, then
\begin{equation}
\eta_N(\xi)
=
\tfrac{\langle f(\xi),g_N\rangle}{a(g_N,g_N)}.
\label{eq:fixed-eta-basic}
\end{equation}
For every $N$ and $\xi\in\Omega$, the approximation $u_N(\xi)$ and the error $R^{(N)}(\xi)$ satisfy
\begin{equation}
u_N(\xi)=P_{G_N}^{a}u(\xi)
\quad\mbox{and}\quad R^{(N)}(\xi)=(I-P_{G_N}^{a})u(\xi),
\label{eq:projection-error}
\end{equation}
where $P_{G_N}^{a}$ denotes the $a$-orthogonal projection onto $G_N$. 
The approximation $u_N(\xi)$ also has the best-approximation property
\begin{equation}
\|u(\xi)-u_N(\xi)\|_a
=
\inf_{v\in G_N}\|u(\xi)-v\|_a.
\label{eq:fixed-best-approximation}
\end{equation}

For the recovery assertion, let $W_*\subset\mathcal V$ be the essential linear span of the solution family, namely, the smallest closed subspace containing $u(\xi)$ for $\mu$-almost every $\xi\in\Omega$, and suppose in addition that $\dim W_*=r_*<\infty$.
Then, with probability one, there exists a finite random index $T$ such that $G_T=W_*$, and
\[
u_N(\xi)=u(\xi)
\qquad
\text{for $\mu$-almost every }\xi\in\Omega,
\quad N\ge T.
\]
On the same event of probability one, at most $r_*$ nonzero directions are generated, although the total number of samples required to reach $T$ may exceed $r_*$.
\end{proposition}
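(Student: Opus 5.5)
The orthogonality relations \eqref{eq:fixed-direction-orthogonality} and \eqref{eq:fixed-error-orthogonality} will be proved together by induction on $N$. The inductive hypothesis is that $a(R^{(N-1)}(\xi),g_j)=0$ for all $\xi\in\Omega$ and all $j<N$. The base case $N=1$ is vacuous. For the inductive step, evaluate the hypothesis at $\xi=\Xi_N$ and use $g_N=R^{(N-1)}(\Xi_N)$ from \eqref{eq:direction-as-error}. Because $a$ does not depend on the parameter, this gives $a(g_N,g_j)=0$ for $j<N$. The update \eqref{eq:error-update} then yields $a(R^{(N)}(\xi),g_j)=a(R^{(N-1)}(\xi),g_j)-\eta_N(\xi)a(g_N,g_j)=0$ for $j<N$. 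The case $j=N$ is \eqref{eq:one-step-orthogonality}. When $g_N=0$, everything is trivial, since the error does not change.

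For \eqref{eq:fixed-eta-basic}, I will use the algorithm's formula for $\eta_N$. Its numerator is $\langle f(\xi),g_N\rangle-\sum_{s<N}\eta_s(\xi)a(g_s,g_N)$, and each term $a(g_s,g_N)$ vanishes by \eqref{eq:fixed-direction-orthogonality} (for $g_s=0$ this is immediate). For \eqref{eq:projection-error}, note that $u_N(\xi)\in G_N$ and that $u(\xi)-u_N(\xi)=R^{(N)}(\xi)$ is $a$-orthogonal to every $g_j$, hence to all of $G_N$. This characterizes $u_N(\xi)$ as the $a$-orthogonal projection onto $G_N$. The best-approximation property \eqref{eq:fixed-best-approximation} then follows from Pythagoras: $\|u-v\|_a^2=\|R^{(N)}\|_a^2+\|u_N-v\|_a^2$ for $v\in G_N$.

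For recovery, I will first show by induction that $G_N=\spanop\{u(\Xi_1),\dots,u(\Xi_N)\}$. This holds because $g_N=u(\Xi_N)-u_{N-1}(\Xi_N)$ with $u_{N-1}(\Xi_N)\in G_{N-1}$. On the probability-one event $\{\Xi_n\in\Omega_0\ \forall n\}$, where $\Omega_0$ is a full-measure set with $u(\Omega_0)\subset W_*$, we get $G_N\subset W_*$ for all $N$. Since $\dim G_N$ is nondecreasing and bounded by $r_*$, and a nonzero $g_N$ is $a$-orthogonal to $G_{N-1}$ and so raises the dimension by one, at most $r_*$ nonzero directions occur.

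The main obstacle is showing that $G_N$ eventually equals $W_*$ almost surely. Suppose $G_N=V\subsetneq W_*$. By minimality of $W_*$, the set $\{\xi:u(\xi)\notin V\}$ has positive $\mu$-measure; otherwise $V$ would be a smaller closed subspace containing $u(\xi)$ $\mu$-a.e. I will handle this with a finite union bound rather than conditioning on the random $V$. For each proper subspace $V\subsetneq W_*$ that actually arises, let $p_V:=\mu(u\notin V)>0$. Since $\dim$ can increase at most $r_*$ times, I will use the strong Markov property (independence of future samples from $\F_N$) to argue that, from any stage with $G_N=V$, the waiting time until $u(\Xi_n)\notin V$ is geometric with parameter $p_V$ and hence finite a.s. To avoid measurability issues over uncountably many possible $V$, I would instead argue with conditional probabilities: $\Pp(G_{N+k}=G_N\mid\F_N)=(1-p_{G_N})^k\to0$ on $\{G_N\ne W_*\}$, where $p_{G_N}$ is $\F_N$-measurable. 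Iterating over at most $r_*$ increments then gives a finite $T$ with $G_T=W_*$. After that, $u_N(\xi)=P^a_{W_*}u(\xi)=u(\xi)$ for $\mu$-a.e. $\xi$, by \eqref{eq:projection-error}.
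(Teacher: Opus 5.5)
Your proposal is correct and follows the paper's proof essentially step by step. The steps match: the joint induction on error and direction orthogonality using $g_N=R^{(N-1)}(\Xi_N)$, the projection characterization and the Pythagorean best-approximation argument, the identity $G_N=\operatorname{span}\{u(\Xi_1),\ldots,u(\Xi_N)\}$, and the geometric waiting-time argument based on $\mu\{\xi: u(\xi)\notin G_N\}>0$ from the minimality of $W_*$. The only differences are cosmetic: you read off \eqref{eq:fixed-eta-basic} from the algorithm's numerator instead of \eqref{eq:eta-residual}, and you spell out the conditional bound $(1-p_{G_N})^k$ and the orthogonality-based count of nonzero directions more explicitly than the paper does.
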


\begin{proof}
When $g_N=0$, Algorithm~\ref{alg:VS} sets $\eta_N\equiv0$, so $G_N=G_{N-1}$ and $u_N=u_{N-1}$, and such an update preserves all previously established orthogonality relations.
We therefore prove the direction and error orthogonality together by considering the nonzero updates.
At $N=1$, \eqref{eq:one-step-orthogonality} establishes \eqref{eq:fixed-error-orthogonality}, and there are no direction pairs.
Suppose that \eqref{eq:fixed-error-orthogonality} holds after step $N-1$.
The new direction $g_N=R^{(N-1)}(\Xi_N)$ inherits this orthogonality, since $a(g_N,g_j)=a(R^{(N-1)}(\Xi_N),g_j)=0$ for $j=1,\ldots,N-1$.
Consequently, the correction $\eta_N(\xi)g_N$ in \eqref{eq:error-update} preserves the earlier relations, since $a(R^{(N)}(\xi),g_j)=a(R^{(N-1)}(\xi),g_j)-\eta_N(\xi)a(g_N,g_j)=0$ for $j<N$.
By Lemma~\ref{lem:basic-error-relations}, the update satisfies $a(R^{(N)}(\xi),g_N)=0$ as stated in \eqref{eq:one-step-orthogonality}.
This completes the induction for \eqref{eq:fixed-direction-orthogonality}--\eqref{eq:fixed-error-orthogonality}.

For a nonzero direction $g_N$, the term $u_{N-1}(\xi)\in G_{N-1}$ contributes nothing to the coefficient numerator because $g_N$ is orthogonal to $G_{N-1}$, so
\[
\begin{array}{ll}
a(R^{(N-1)}(\xi),g_N)
&=a(u(\xi)-u_{N-1}(\xi),g_N)=a(u(\xi),g_N)=\langle f(\xi),g_N\rangle.
\end{array}
\]
Since $a(R^{(N-1)}(\xi),g_N)=\langle f(\xi),g_N\rangle$, the coefficient formula \eqref{eq:eta-residual} is precisely \eqref{eq:fixed-eta-basic}.
Moreover, $u_N(\xi)\in G_N$ and $u(\xi)-u_N(\xi)$ is orthogonal to $G_N$ by \eqref{eq:fixed-error-orthogonality}, which characterizes the projection in \eqref{eq:projection-error}. 

For any $v\in G_N$, the vector $u_N(\xi)-v$ belongs to $G_N$ and is therefore orthogonal to $R^{(N)}(\xi)$.
Hence
\[
\begin{array}{ll}
\|u(\xi)-v\|_a^2
&=\|R^{(N)}(\xi)+u_N(\xi)-v\|_a^2\\[2mm]
&=\|R^{(N)}(\xi)\|_a^2+\|u_N(\xi)-v\|_a^2
\ge\|R^{(N)}(\xi)\|_a^2.
\end{array}
\]
Equality is attained for $v=u_N(\xi)$, proving \eqref{eq:fixed-best-approximation}.

To prove that $G_N$ reaches $W_*$ after an almost surely finite number of samples, suppose that $\dim W_*=r_*<\infty$.
The direction $g_N=R^{(N-1)}(\Xi_N)=(I-P_{G_{N-1}}^a)u(\Xi_N)$ is the component of $u(\Xi_N)$ orthogonal to $G_{N-1}$ by \eqref{eq:projection-error}, so adjoining $g_N$ or $u(\Xi_N)$ to $G_{N-1}$ produces the same space, i.e.,
\begin{equation}
\operatorname{span}\{G_{N-1},g_N\}=\operatorname{span}\{G_{N-1},u(\Xi_N)\}.
\label{eq:fixed-span-recursion}
\end{equation}
Since each $\Xi_n$ has law $\mu$ and $u(\xi)\in W_*$ for $\mu$-almost every $\xi$, the events $\{u(\Xi_n)\in W_*\}$ each have probability one, as does their countable intersection over $n\ge1$.
On this event, the recursion \eqref{eq:fixed-span-recursion}, starting from $G_0=\{0\}$, identifies every generated space as
\[
G_N=\operatorname{span}\{u(\Xi_1),\ldots,u(\Xi_N)\}\subset W_*.
\]

If $r_*=0$, recovery holds with $T=0$, so it remains to consider $r_*>0$ and show that the sampled span reaches $W_*$ after an almost surely finite number of samples.
As long as $G_N$ is a proper subspace of $W_*$, the minimality of $W_*$ ensures that $\mu\{\xi\in\Omega:u(\xi)\notin G_N\}>0$.
A sample whose solution lies in $G_N$ produces a zero direction and leaves the space unchanged, so the probability of leaving $G_N$ stays positive and constant until the next nonzero direction.
For any integer $m\ge1$, the next $m$ independent samples all have their solutions in $G_N$ with conditional probability $\bigl(\mu\{\xi\in\Omega:u(\xi)\in G_N\}\bigr)^m$ given $\F_N$.
Since $\mu\{\xi\in\Omega:u(\xi)\in G_N\}<1$, this probability tends to zero as $m\to\infty$, so a solution outside $G_N$ is sampled after an almost surely finite number of trials.
Its residual direction is nonzero and increases the dimension of the generated space by one.
Since at most $r_*=\dim W_*$ increases are needed and each occurs after an almost surely finite waiting time, $G_T=W_*$ for some finite random index $T$ with probability one.
For all $N\ge T$, the projection $u_N(\xi)=P_{G_N}^a u(\xi)$ then equals $u(\xi)$ for $\mu$-almost every $\xi$.
\end{proof}

\subsection{Separable right-hand sides}

For a right-hand side of the form \eqref{eq:fixed-separable-f}, the variational problem \eqref{eq:pvp} with $f(\xi)$ replaced by $F_\ell$ has a parameter-independent solution because $a$ is fixed.
The combination of these fixed solutions with coefficients $\phi_\ell(\xi)$ solves \eqref{eq:pvp} with right-hand side $\compactsum_{\ell=0}^{L}\phi_\ell(\xi)F_\ell=f(\xi)$ by linearity in the first argument of $a$.
This combination equals the unique solution $u(\xi)$, which therefore lies in the span of the fixed solutions.
The following proposition establishes the representation and, through \eqref{eq:fixed-eta-basic} from Proposition~\ref{prop:fixed-orthogonal}, places every VS coefficient function $\eta_N$ in $\operatorname{span}\{\phi_0,\ldots,\phi_L\}$.

\begin{proposition}
\label{prop:fixed-separable}
Suppose that the right-hand side in \eqref{eq:pvp} has the representation
\begin{equation}
f(\xi)=\compactsum_{\ell=0}^{L}\phi_\ell(\xi)F_\ell,
\label{eq:fixed-separable-f}
\end{equation}
where $L\ge0$ is an integer and, for $\ell=0,\ldots,L$, $\phi_\ell:\Omega\to\RR$ is a scalar coefficient function and $F_\ell\in\V'$ is a fixed linear functional.
For $\ell=0,\ldots,L$, let $w_\ell\in\V$ be the unique solution of
\begin{equation}
a(w_\ell,v)=\langle F_\ell,v\rangle
\qquad \forall\,v\in\V.
\label{eq:fixed-spatial-components}
\end{equation}
Then the unique solution $u(\xi)$ of \eqref{eq:pvp} satisfies, for every $\xi\in\Omega$,
\begin{equation}
u(\xi)=\compactsum_{\ell=0}^{L}\phi_\ell(\xi)w_\ell.
\label{eq:fixed-solution}
\end{equation}
The linear span of the solutions $u(\xi)$ satisfies
\begin{equation}
\operatorname{span}\{u(\xi):\xi\in\Omega\}
\subset
\operatorname{span}\{w_0,\ldots,w_L\},
\label{eq:fixed-solution-span}
\end{equation}
and has dimension at most $L+1$. 
The coefficient function $\eta_N$ belongs to $\operatorname{span}\{\phi_0,\ldots,\phi_L\}$ for every $N\ge1$.
Moreover, with probability one, there is a finite random index $T$ such that $u_N(\xi)=u(\xi)$ for $\mu$-almost every $\xi\in\Omega$ and all $N\ge T$, and at most $L+1$ nonzero directions are generated.
\end{proposition}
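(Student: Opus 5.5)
The plan is to prove the four assertions in order. The representation and span claims follow from linearity and uniqueness. The claim on the coefficient functions follows from \eqref{eq:fixed-eta-basic}. The recovery claim is obtained by reducing to the finite-step recovery part of Proposition~\ref{prop:fixed-orthogonal}.

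\emph{Representation and span.} First, each $w_\ell$ exists and is unique by the Lax--Milgram theorem, since $a$ is bounded, symmetric and coercive by \eqref{eq:bounded} and \eqref{eq:coercive}. Fix $\xi\in\Omega$ and set $\tilde u(\xi):=\compactsum_{\ell}\phi_\ell(\xi)w_\ell$. By linearity of $a$ in its first argument and \eqref{eq:fixed-spatial-components},
\[
a(\tilde u(\xi),v)=\compactsum_{\ell}\phi_\ell(\xi)\langle F_\ell,v\rangle=\langle f(\xi),v\rangle
\qquad\forall\,v\in\V.
\]
Uniqueness of the solution of \eqref{eq:pvp} then gives \eqref{eq:fixed-solution}. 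The inclusion \eqref{eq:fixed-solution-span} and the dimension bound $L+1$ are immediate consequences.

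\emph{Coefficient functions.} If $g_N=0$, then $\eta_N\equiv0$, which lies in the span. Otherwise \eqref{eq:fixed-eta-basic} gives
\[
\eta_N(\xi)=\compactsum_{\ell}\frac{\langle F_\ell,g_N\rangle}{a(g_N,g_N)}\,\phi_\ell(\xi),
\]
and the coefficients do not depend on $\xi$. So $\eta_N\in\spanop\{\phi_0,\ldots,\phi_L\}$ for every realization of the sampling history.

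\emph{Recovery.} Let $W_*$ be the essential linear span defined in Proposition~\ref{prop:fixed-orthogonal}. Since $\spanop\{w_0,\ldots,w_L\}$ is a finite-dimensional, hence closed, subspace containing every $u(\xi)$, minimality gives $W_*\subset\spanop\{w_0,\ldots,w_L\}$. Hence $r_*=\dim W_*\le L+1<\infty$. The recovery part of Proposition~\ref{prop:fixed-orthogonal} then supplies, with probability one, a finite random index $T$ such that $u_N(\xi)=u(\xi)$ for $\mu$-a.e.\ $\xi$ and all $N\ge T$. On the same event at most $r_*\le L+1$ nonzero directions are generated.

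The only point needing care is this last step: the hypotheses of the recovery part of Proposition~\ref{prop:fixed-orthogonal} concern $W_*$ rather than the full span, so I will check minimality explicitly. Here finite-dimensionality ensures the span is closed, so it qualifies as a competitor in the definition of $W_*$. Measurability of $\phi_\ell$ is already implicit in the standing assumption that $f$ is Borel, so no additional hypothesis is needed.
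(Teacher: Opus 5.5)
Your proposal is correct and follows the paper's proof step for step: linearity plus uniqueness for \eqref{eq:fixed-solution}, substitution of \eqref{eq:fixed-separable-f} into \eqref{eq:fixed-eta-basic} for $\eta_N$, and $W_*\subset\operatorname{span}\{w_0,\ldots,w_L\}$ to invoke the recovery part of Proposition~\ref{prop:fixed-orthogonal}; your check that the finite-dimensional span is closed spells out a point the paper leaves implicit. One aside is inaccurate, though harmless: the individual $\phi_\ell$ are Borel whenever $f$ is only if the $F_\ell$ are linearly independent, but nothing in your argument needs this, because the recovery step uses only the strong measurability of $u$ from Lemma~\ref{lem:solution-measurability}.
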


\begin{proof}
For an arbitrary $v\in\V$, the defining equations \eqref{eq:fixed-spatial-components} give
\[
\begin{array}{ll}
a\Bigl(
\compactsum_{\ell=0}^{L}\phi_\ell(\xi)w_\ell,v
\Bigr)
&=
\compactsum_{\ell=0}^{L}\phi_\ell(\xi)a(w_\ell,v)=
\compactsum_{\ell=0}^{L}\phi_\ell(\xi)\langle F_\ell,v\rangle=
\langle f(\xi),v\rangle.
\end{array}
\]
Thus $\compactsum_{\ell=0}^{L}\phi_\ell(\xi)w_\ell$ solves \eqref{eq:pvp} and equals $u(\xi)$ by uniqueness, proving \eqref{eq:fixed-solution} and the span inclusion \eqref{eq:fixed-solution-span}.
For $g_N\ne0$, substitute the representation \eqref{eq:fixed-separable-f} and the identity $\langle F_\ell,g_N\rangle=a(w_\ell,g_N)$ from \eqref{eq:fixed-spatial-components} into the coefficient formula \eqref{eq:fixed-eta-basic} from Proposition~\ref{prop:fixed-orthogonal} to obtain
\[
\begin{array}{ll}
\eta_N(\xi)
&=
\frac{\langle f(\xi),g_N\rangle}{a(g_N,g_N)}=
\compactsum_{\ell=0}^{L}\phi_\ell(\xi)
\frac{\langle F_\ell,g_N\rangle}{a(g_N,g_N)}=
\compactsum_{\ell=0}^{L}\phi_\ell(\xi)
\frac{a(w_\ell,g_N)}{a(g_N,g_N)},
\end{array}
\]
which shows that the coefficient function belongs to $\operatorname{span}\{\phi_0,\ldots,\phi_L\}$.
When $g_N=0$, Algorithm~\ref{alg:VS} sets $\eta_N\equiv0$, so the same inclusion holds.

The essential solution span $W_*$ lies in $\operatorname{span}\{w_0,\ldots,w_L\}$ by \eqref{eq:fixed-solution-span}, so $r_*\le L+1$.
Proposition~\ref{prop:fixed-orthogonal} therefore guarantees exactness for $\mu$-almost every parameter after an almost surely finite number of samples, with at most $L+1$ nonzero directions.
\end{proof}

\section{Numerical experiments}
\label{sec:numerics}

The first experiment examines error decay for a parametric diffusion problem in the setting of Section~\ref{sec:finite-term}, together with sensitivity to discretization and direction-discarding thresholds.
The next two experiments compare five-dimensional solution families for parameter-dependent and parameter-independent operators.
They examine the relation between recovery of the spatial solution span and accuracy of the VS approximation, and illustrate the projection structure established in Proposition~\ref{prop:fixed-orthogonal}.

For each experiment, let $D$ denote the spatial domain specified below and take $\mathcal V=H_0^1(D)$.
To measure the accuracy of VS at a fixed spatial resolution, we compare it with the full-order solution in the same finite-element space $V_h\subset\mathcal V$ on a mesh of size $h$.
Let $A_h(\xi)$ be the assembled stiffness matrix, and let $u_h(\xi)$ and $u_{N,h}(\xi)$ denote the full-order discrete solution and the VS approximation after $N$ iterations, respectively.
When using matrix notation, identify finite-element functions with their nodal coefficient vectors and define the discrete error and its energy by
\[
R_h^{(N)}(\xi):=u_h(\xi)-u_{N,h}(\xi)
\quad\mbox{and}\quad
E_{N,h}(\xi):=
R_h^{(N)}(\xi)^T A_h(\xi)R_h^{(N)}(\xi).
\]
This comparison measures the VS approximation error relative to $u_h(\xi)$ and therefore excludes the spatial discretization error between $u(\xi)$ and $u_h(\xi)$.
All experiments use continuous piecewise linear finite elements and double-precision arithmetic, with specific discretization and evaluation settings as described below.

We approximate the parameter average of $E_{N,h}$ using an evaluation rule $\{\xi^{(i)},\varpi_i\}_{i=1}^{n_{\mathrm{eval}}}$ with $\varpi_i>0$ and $\compactsum_i\varpi_i=1$.
The resulting discrete counterpart of the parameter-averaged error energy in \eqref{eq:error-energy-definitions} is
\[
\cE_{N,h}^{\mathrm{eval}}
:=
\compactsum_{i=1}^{n_{\mathrm{eval}}}
\varpi_iE_{N,h}(\xi^{(i)}).
\]
For an independent test set, the weights are $\varpi_i=1/n_{\mathrm{eval}}$, whereas numerical quadrature uses the corresponding normalized quadrature weights.
Normalizing by the initial value gives the reported relative error energy $\cE_{N,h}^{\mathrm{eval}}/\cE_{0,h}^{\mathrm{eval}}$, whose square root is the relative root-mean-square discrete energy-norm error.

The convergence analysis applies to the VS iteration in exact arithmetic without discarding nonzero directions.
In the numerical experiments, floating-point arithmetic and the direction-discarding thresholds specified below may perturb the updates, and the convergence estimates do not cover these perturbations.

\subsection{Convergence for a parametric diffusion problem}

Consider
\begin{equation*}
\begin{array}{ll}
-\frac{\mathrm d}{\mathrm dx}
\bigl(k(x,\xi)\frac{\mathrm du}{\mathrm dx}(x,\xi)\bigr)=1,
\quad x\in(0,1)
\quad\mbox{and}\quad u(0,\xi)=u(1,\xi)=0,
\end{array}
\end{equation*}
where $\xi=(\xi_1,\xi_2)\in[-1,1]^2$ and
\begin{equation*}
\begin{array}{ll}
k(x,\xi)=2+0.4\xi_1\sin(2\pi x)+0.3\xi_2\cos(\pi x).
\end{array}
\end{equation*}
For every $x\in(0,1)$ and $\xi\in[-1,1]^2$, we have $1.3\le k(x,\xi)\le2.7$.
The diffusion bilinear form is therefore symmetric, uniformly coercive, and uniformly bounded, with affine parameter dependence through continuous coefficient functions.
With the constant right-hand side, this problem is covered by Theorem~\ref{thm:finite-term-main}.

We discretized the spatial problem with continuous piecewise linear finite elements on uniform meshes of $64$, $128$, and $256$ elements, using the $128$-element mesh for the main results and the other two to examine sensitivity to spatial resolution.
On each mesh, the element stiffness matrices were assembled using the exact element averages of the diffusion coefficient, and the constant load was integrated exactly.

At every VS iteration, a fresh parameter was drawn independently from the uniform probability measure on $[-1,1]^2$.
This makes the direction sampling consistent with the continuous parameter distribution that defines the parameter-averaged error energy, without selecting parameters from a fixed training set.
To examine variability between runs, we generated $100$ independent sampling histories, each with $60$ iterations.

Using the assembled load vector $b_h$ for the constant right-hand side $1$, we computed the candidate direction $\widetilde g_{N,h}$ at step $N$ by solving the discrete residual problem
\[
A_h(\zeta_N)\widetilde g_{N,h}
=
b_h-A_h(\zeta_N)u_{N-1,h}(\zeta_N).
\]
For each accepted candidate direction, we evaluated the associated parameter-dependent coefficient function using the finite-element counterpart of \eqref{eq:eta-residual}, assembling its numerator by testing the algebraic residual $b_h-A_h(\xi)u_{N-1,h}(\xi)$ against the accepted direction as in Algorithm~\ref{alg:VS}.
Full-order finite-element solutions supplied the reference values used only for error evaluation.
To approximate the parameter integrals in this evaluation, we used a $32\times32$ tensor-product Gauss--Legendre rule with weights normalized for the uniform probability measure.
On the $128$-element mesh, a $64\times64$ rule was also used to check the parameter-integration error.

In this experiment, a candidate direction was discarded only if $\widetilde g_{N,h}^{T}A_h(\zeta_N)\widetilde g_{N,h} \le 10^{-28}\mathcal E_{0,h}^{\mathrm{eval}}$.
At this threshold, all $60$ directions were accepted in all $100$ histories.

\begin{figure}[htbp]
\centering
\reviewfigure[width=0.72\textwidth]{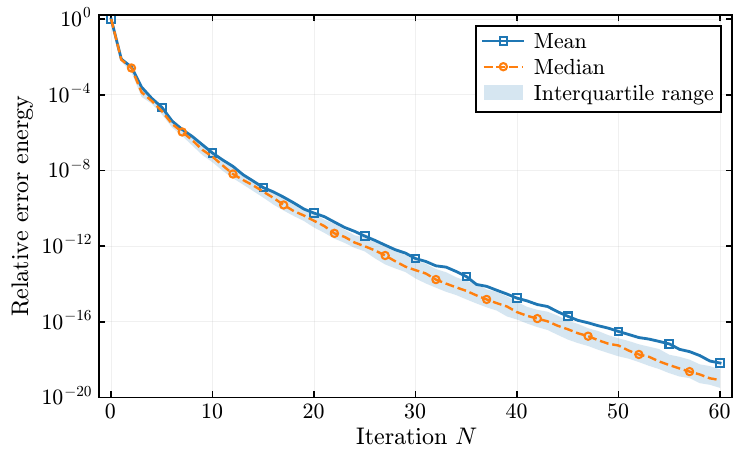}
\caption{Relative error energy for the parameter-dependent diffusion problem on the $128$-element mesh.
The solid and dashed curves show the mean and median over $100$ sampling histories, while the shaded region indicates the interquartile range.}
\label{fig:exp1-test-statistics}
\end{figure}

Figure~\ref{fig:exp1-test-statistics} shows rapid decay of the mean and median of the relative error energy $\cE_{N,h}^{\mathrm{eval}}/\cE_{0,h}^{\mathrm{eval}}$ over the $100$ sampling histories.
On the $128$-element mesh, the mean relative error energy is $8.30\times10^{-8}$, $5.57\times10^{-11}$, $2.19\times10^{-13}$, and $6.63\times10^{-19}$ at steps $10$, $20$, $30$, and $60$, respectively.
The corresponding standard errors are $9.10\times10^{-9}$, $1.33\times10^{-11}$, $6.01\times10^{-14}$, and $1.87\times10^{-19}$.

\begin{figure}[htbp]
\centering
\reviewfigure[width=0.98\textwidth]{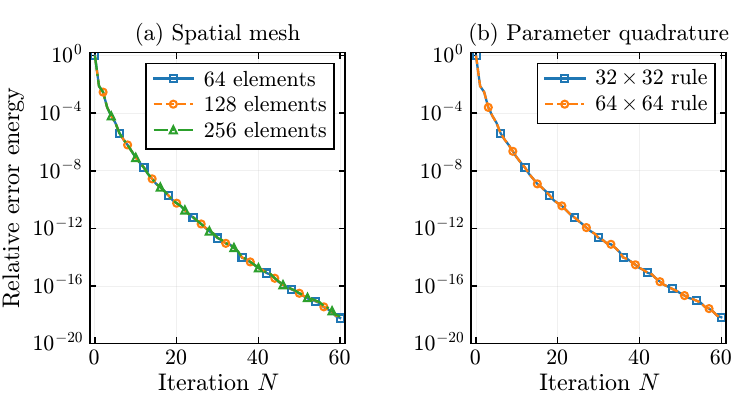}
\caption{Sensitivity of the mean relative error energy to spatial and parameter discretization.
The left panel compares meshes with $64$, $128$, and $256$ elements, while the right panel compares $32\times32$ and $64\times64$ Gauss--Legendre rules on the $128$-element mesh.}
\label{fig:exp1-sensitivity}
\end{figure}

The mean relative error-energy curves for the three spatial meshes in Figure~\ref{fig:exp1-sensitivity} show very similar decay over the computed range.
At step $30$, the mean relative error energies for $64$, $128$, and $256$ elements are $2.12\times10^{-13}$, $2.19\times10^{-13}$, and $2.21\times10^{-13}$, respectively.

To quantify sensitivity to parameter quadrature on the $128$-element mesh, let $\bar q_N^{(32)}$ and $\bar q_N^{(64)}$ denote the relative error energies evaluated using the $32\times32$ and $64\times64$ Gauss--Legendre rules, respectively, and averaged over the $100$ sampling histories.
For the two rules tested, we obtained
\[
\begin{array}{ll}
\max_{0\le N\le60}
\frac{|\bar q_N^{(32)}-\bar q_N^{(64)}|}
{\bar q_N^{(64)}}
=
6.10\times10^{-8}.
\end{array}
\]
Thus, on this mesh and over the reported iterations, the computed mean relative error-energy curve changes little under the tested quadrature refinement.

To check the discrete updates against the one-step energy identity in Proposition~\ref{prop:energy-identity}, define the discrete residual vector by
\[
r_{N,h}(\xi):=b_h-A_h(\xi)u_{N,h}(\xi).
\]
For each accepted step, we computed the normalized defect as
\[
\begin{array}{ll}
\delta_N(\xi)
:=
\frac{1}{\mathcal E_{0,h}^{\mathrm{eval}}}
\Big|
E_{N-1,h}(\xi)-E_{N,h}(\xi)
-
\frac{[\widetilde g_{N,h}^{T}r_{N-1,h}(\xi)]^2}
{\widetilde g_{N,h}^{T}A_h(\xi)\widetilde g_{N,h}}
\Big|.
\end{array}
\]
The maximum value of $\delta_N(\xi)$ over the tested histories, evaluation parameters, and meshes was $4.99\times10^{-13}$.
This small normalized defect is consistent with the energy identity up to floating-point error at the tested parameters.

To test the effect of discarding small directions, we used direction-norm tolerances $10^{-12}$, $10^{-14}$, and $10^{-15}$ with the same first $20$ sampling histories in the $128$-element experiment.
Here the tolerance $\varepsilon$ is relative to $(\mathcal E_{0,h}^{\mathrm{eval}})^{1/2}$ in the energy norm induced by $A_h(\zeta_N)$, so a candidate direction is discarded when $\widetilde g_{N,h}^{T}A_h(\zeta_N)\widetilde g_{N,h}\le\varepsilon^2\mathcal E_{0,h}^{\mathrm{eval}}$.
The three tolerances therefore correspond to energy thresholds $10^{-24}$, $10^{-28}$, and $10^{-30}$ times the initial discrete parameter-averaged error energy $\mathcal E_{0,h}^{\mathrm{eval}}$.
With tolerance $10^{-12}$, between $58$ and $60$ directions were retained, whereas all $60$ directions were retained for the other two tolerances.
For direction-norm tolerances $10^{-12}$, $10^{-14}$, and $10^{-15}$, the mean relative error energies at step $60$ over the $20$ sampling histories were $1.36\times10^{-18}$, $1.20\times10^{-18}$, and $1.20\times10^{-18}$, respectively.
Thus, the threshold affects only the very small terminal error level in this experiment.
We do not use these computations to infer a sharp asymptotic decay rate.

\subsection{Five-dimensional solution families}

The next two experiments provide two structural examples with exactly five-dimensional solution spans.
The first illustrates the behavior of the VS iteration for a parameter-dependent operator, while the second illustrates the orthogonal projection structure obtained when the operator is parameter-independent.
Together, they distinguish between the dimension of the generated spatial space and the accuracy of the resulting VS approximation.

\subsubsection{A parameter-dependent diffusion operator}

Let $D=\bigcup_{j=1}^{5}I_j$, where $I_j=(2j-2,2j-1)$, and consider the problem on each $I_j$ given by
\begin{equation*}
\begin{array}{ll}
-\frac{\mathrm d}{\mathrm dx}
\bigl(k(x,\xi)\frac{\mathrm du}{\mathrm dx}(x,\xi)\bigr)=1
\quad\mbox{and}\quad u|_{\partial I_j}=0,
\end{array}
\end{equation*}
where $\xi=(\xi_1,\ldots,\xi_5)\in[-1,1]^5$ and $k(x,\xi)=1.2+0.8\xi_j$ for $x\in I_j$.
On each component interval, the diffusion coefficient is constant in space, so changing $\xi_j$ rescales the solution without changing its spatial shape.
For $I_j=(x_j^-,x_j^+)$, the solution with unit diffusion and load $1$, extended by zero to the other components, is
\begin{equation*}
\begin{array}{ll}
\psi_j(x)=
\begin{cases}
\tfrac12(x-x_j^-)(x_j^+-x),&x\in I_j,\\
0,&x\notin I_j.
\end{cases}
\end{array}
\end{equation*}
Dividing each mode by its diffusion coefficient therefore gives the solution
\begin{equation}
u(x,\xi)=\compactsum_{j=1}^{5}\frac{1}{1.2+0.8\xi_j}\psi_j(x).
\label{eq:exp2-exact-solution}
\end{equation}
The nonzero functions $\psi_j$ have disjoint supports and are therefore linearly independent.
Each coordinate $\xi_j$ can vary while the others remain fixed, which also makes the five parameter coefficient functions linearly independent.
In particular, the difference between solutions at two parameter values that differ only in $\xi_j$ is a nonzero multiple of $\psi_j$.
Thus every $\psi_j$ belongs to the span of the solutions $u(\xi)$, and the representation \eqref{eq:exp2-exact-solution} shows that this span is exactly five-dimensional.

Each component interval had $30$ interior degrees of freedom ($h=1/31$), giving $150$ in total.
On these meshes, the constant-coefficient stiffness matrices and constant load vectors were assembled exactly.
To express the resulting discrete family in the same modal form, let $\psi_{j,h}$ be the finite-element solution for unit diffusion and load $1$ on $I_j$, extended by zero to the other components.
The full-order discrete solution then has the expansion \eqref{eq:exp2-exact-solution} with $\psi_j$ replaced by $\psi_{j,h}$, so its solution span is also exactly five-dimensional.

\subsubsection{A parameter-independent Laplace operator}

On $D=(0,1)$, consider
\begin{equation*}
\begin{array}{ll}
-u''(x,\xi)=f_{\mathrm{vol}}(x,\xi)
\quad\mbox{and}\quad u(0,\xi)=u(1,\xi)=0,
\end{array}
\end{equation*}
where $\xi=(\xi_1,\ldots,\xi_5)\in[-1,1]^5$ and
\begin{equation*}
\begin{array}{ll}
f_{\mathrm{vol}}(x,\xi)=\compactsum_{j=1}^{5}\xi_j(j\pi)^2\sin(j\pi x).
\end{array}
\end{equation*}
This density induces the functional $f(\xi)\in\Vp$ in \eqref{eq:pvp} through
\[
\langle f(\xi),v\rangle=\int_0^1 f_{\mathrm{vol}}(x,\xi)v(x)\,\mathrm dx
\qquad \forall\,v\in\V.
\]
For $j=1,\ldots,5$, the right-hand side $(j\pi)^2\sin(j\pi x)$ has solution $\sin(j\pi x)$ with homogeneous Dirichlet boundary conditions, so Proposition~\ref{prop:fixed-separable} gives
\begin{equation*}
\begin{array}{ll}
u(x,\xi)=\compactsum_{j=1}^{5}\xi_j\sin(j\pi x),
\end{array}
\end{equation*}
again with a five-dimensional solution span.
All parameters share the energy inner product
\begin{equation*}
\begin{array}{ll}
a(v,w)=\int_0^1v'(x)w'(x)\,\mathrm dx.
\end{array}
\end{equation*}
The mesh had $100$ interior degrees of freedom ($h=1/101$).
To construct the discrete counterparts of the five modes, we assembled the stiffness matrix exactly and each load vector corresponding to a sinusoidal right-hand side using three-point Gauss--Legendre quadrature on every element.
With $\psi_{j,h}$ denoting the finite-element solution corresponding to the load $(j\pi)^2\sin(j\pi x)$, the discrete solution takes the form $u_h(\xi)=\compactsum_{j=1}^{5}\xi_j\psi_{j,h}$, and its solution span is five-dimensional.

In both experiments, parameters were sampled independently from the uniform distribution on $[-1,1]^5$, and a fixed independent test set of $5000$ samples was used for error evaluation.
To examine variability between runs, we generated $40$ independent sampling histories with $30$ iterations each.
The two experiments reused the same test parameters and the same sampling sequences for corresponding histories, allowing the problems to be compared with the random inputs held fixed.

Within the two fixed five-mode coordinate systems, we used problem-specific absolute energy thresholds to identify numerically negligible directions.
An unnormalized candidate direction $\widetilde g_{N,h}$ was discarded when $\widetilde g_{N,h}^{T}A_h(\zeta_N)\widetilde g_{N,h}$ was at most $10^{-28}$ for the parameter-dependent diffusion problem or $10^{-24}$ for the parameter-independent Laplace problem.
When a candidate direction was discarded, the approximation remained $u_{N,h}=u_{N-1,h}$, while the iteration count still advanced.

For each of the two problems, we precomputed its fixed modes $\psi_{j,h}$ using NumPy's direct linear solver and evaluated VS updates and energy products in the corresponding five-mode coordinates.
At each newly sampled parameter $\zeta_N$, the coefficients of $R_h^{(N-1)}(\zeta_N)$ were obtained by applying the previously retained VS corrections, in their original order, to the coefficients of the full-order solution $u_h(\zeta_N)$.
Because the full-order solution and every retained direction lie in the span of these modes, the coordinate updates reproduce the same discrete VS iteration in exact arithmetic, without additional solution-space truncation.

To distinguish numerical recovery of the discrete solution space from accuracy of the VS approximation, we estimate the dimension of the retained-direction span in the fixed five-mode coordinates, compare it with the dimension of $\operatorname{span}\{u_h(\xi):\xi\in[-1,1]^5\}$, and track the relative error energy $\cE_{N,h}^{\mathrm{eval}}/\cE_{0,h}^{\mathrm{eval}}$.
For each retained direction, write
\[
g_{k,h}=\compactsum_{j=1}^{5}c_{jk}\psi_{j,h},
\]
and denote its coefficient vector by $\mathbf c_k=(c_{1k},\ldots,c_{5k})^T$.
At iteration $N$, the directions included in this diagnostic are specified by the retained-index set
\[
\mathcal I_N:=\{k\in\{1,\ldots,N\}:g_{k,h}\text{ is retained}\}.
\]
To compare the retained directions on a common scale, for $k\in\mathcal I_N$ set
\[
\widehat{\mathbf c}_k:=\frac{\mathbf c_k}{\|\mathbf c_k\|_2}
\quad\mbox{and}\quad
\widehat C_N:=[\widehat{\mathbf c}_k]_{k\in\mathcal I_N},
\]
with the columns ordered by increasing $k$.
If $\mathcal I_N=\emptyset$, set $r_N^{\mathrm{num}}:=0$, and otherwise estimate the dimension by counting singular values above a relative threshold according to
\[
r_N^{\mathrm{num}}
:=
\#\Bigl\{j:
\sigma_j(\widehat C_N)>
10^{-12}\sigma_1(\widehat C_N)
\Bigr\}.
\]
Here $\sigma_j(\widehat C_N)$ denotes the $j$th singular value in nonincreasing order.
The resulting numerical rank $r_N^{\mathrm{num}}$ is used only as a diagnostic and does not affect the VS iteration or the retention of nonzero directions.
At step $5$, the minimum value of $\sigma_5(\widehat C_5)/\sigma_1(\widehat C_5)$ over the $40$ histories is $7.75\times10^{-3}$ for the parameter-dependent problem and $1.91\times10^{-1}$ for the parameter-independent problem, both well above the prescribed rank tolerance.

\begin{figure}[htbp]
\centering
\reviewfigure[width=0.7\textwidth]{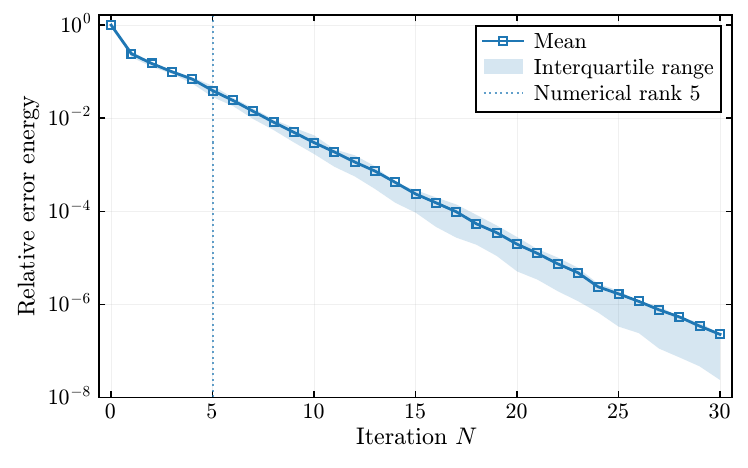}
\caption{Relative test-set error energy for the parameter-dependent five-dimensional problem.
The curve and shading show the mean and interquartile range over $40$ sampling histories, while the dotted line marks step $5$, when the numerical rank reaches five in every history.}
\label{fig:exp2-pde-energy-decay}
\end{figure}

\begin{figure}[htbp]
\centering
\reviewfigure[width=0.7\textwidth]{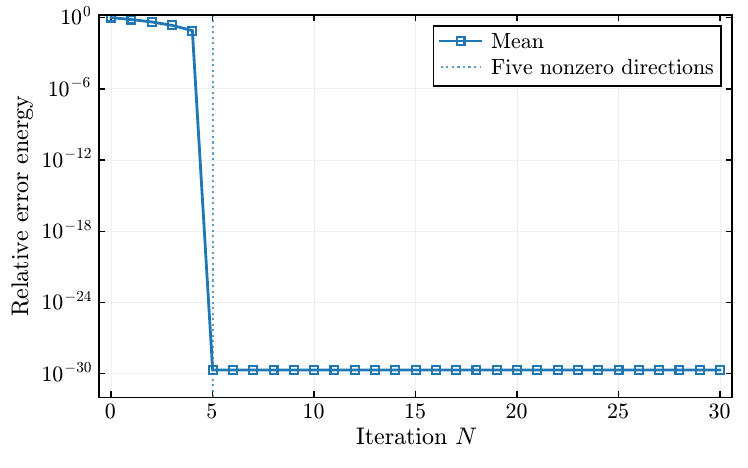}
\caption{Mean relative test-set error energy over $40$ sampling histories for the parameter-independent five-dimensional problem.
The dotted line marks recovery to roundoff after five nonzero directions.}
\label{fig:exp3-pde-energy}
\end{figure}

\begin{table}[htbp]
\centering
\caption{Mean relative test-set error energies over $40$ sampling histories for two problems that both satisfy $r_5^{\mathrm{num}}=5$ in every history.}
\label{tab:five-dimensional-comparison}
\scriptsize
\resizebox{\textwidth}{!}{%
\begin{tabular}{lcccc}
\toprule
Problem & \shortstack{Parameter-dependent\\operator} & $r_5^{\mathrm{num}}$ & $\cE_{5,h}^{\mathrm{eval}}/\cE_{0,h}^{\mathrm{eval}}$ & $\cE_{30,h}^{\mathrm{eval}}/\cE_{0,h}^{\mathrm{eval}}$\\
\midrule
Diffusion & Yes & $5$ & $3.922\times10^{-2}$ & $2.27\times10^{-7}$\\
Laplace & No & $5$ & $2.056\times10^{-30}$ & $2.056\times10^{-30}$\\
\bottomrule
\end{tabular}%
}
\end{table}

Figure~\ref{fig:exp2-pde-energy-decay} and Table~\ref{tab:five-dimensional-comparison} show that, for the parameter-dependent operator, the generated directions reach numerical rank five at step $5$ in every history, while the mean relative error energy is $3.922\times10^{-2}$.
The mean relative error energy then continues to decrease within the same five-dimensional spatial space, reaching $2.27\times10^{-7}$ at step $30$.
This illustrates that, for a parameter-dependent operator, additional VS terms may continue to improve the parameter-dependent representation even after the generated spatial space has reached the dimension of the discrete solution family.

For the parameter-independent operator, Figure~\ref{fig:exp3-pde-energy} shows that the mean of the relative test-set error energy $\cE_{N,h}^{\mathrm{eval}}/\cE_{0,h}^{\mathrm{eval}}$ decreases to a level consistent with floating-point roundoff after five nonzero directions.
To quantify orthogonality in the discrete energy inner product, write $A_h$ for the common parameter-independent stiffness matrix and form $\Phi_h$ with the nodal vectors of the Laplace modes $\psi_{1,h},\ldots,\psi_{5,h}$ as its columns in that order, so that $H_h=\Phi_h^TA_h\Phi_h$ is the energy Gram matrix in these coordinates.
Thus $\mathbf c^T H_h\mathbf d$ is the discrete energy inner product of the directions represented by coefficient vectors $\mathbf c$ and $\mathbf d$.
To measure the deviation of the five retained directions from orthogonality, denote their coefficient vectors in history $s$ by $\mathbf c_i^{(s)}$ and define the maximum absolute normalized energy inner product by
\[
\varepsilon_\perp
:=
\max_{\substack{1\le s\le40\\1\le i<j\le5}}
\tfrac{|(\mathbf c_i^{(s)})^T H_h\mathbf c_j^{(s)}|}
{\sqrt{(\mathbf c_i^{(s)})^T H_h\mathbf c_i^{(s)}}
 \sqrt{(\mathbf c_j^{(s)})^T H_h\mathbf c_j^{(s)}}}.
\]
The resulting overall maximum was $\varepsilon_{\perp}=1.77\times10^{-14}$.
The mean relative error energy was $2.056\times10^{-30}$ at step $5$ and remained unchanged through step $30$.
The small value of $\varepsilon_\perp$ is consistent with the direction orthogonality in Proposition~\ref{prop:fixed-orthogonal}, while recovery to roundoff after five nonzero directions agrees with the finite-recovery conclusion for separated right-hand sides in Proposition~\ref{prop:fixed-separable}.

\section{Conclusions}
\label{sec:conclusion}
Although each VS direction is constructed from a residual problem at a single
sampled parameter, the resulting approximations converge almost surely to the
full parametric solution in the parameter-averaged energy norm under the
finite-dimensional approximation conditions on the error family in
Theorem~\ref{thm:abstract-main}. 
Under the additional quantitative conditions
\eqref{eq:abstract-rate-assumptions}, the expected parameter-averaged
error energy after $N$ iterations has an $O((\log N)^{-1})$ upper bound.
For operators and right-hand sides with finite-term separated
representations, the required approximation conditions are verified in
Theorem~\ref{thm:finite-term-main} without any additional continuity
assumption on the coefficient functions in the operator representation.
When the exact solutions lie in a fixed finite-dimensional subspace for
$\mu$-almost every parameter, the expected parameter-averaged error energy
satisfies the sharper $O(N^{-1})$ bound of
Theorem~\ref{thm:finite-dimensional}. 
For a parameter-independent operator, Proposition~\ref{prop:fixed-orthogonal}
shows that each VS approximation is the energy-orthogonal projection of the
exact solution onto the space generated by the VS directions and satisfies
the best-approximation property in the common energy norm.
If the essential solution span $W_*$ is also finite-dimensional, the
iteration recovers $W_*$ after an almost surely finite number of iterations
and reproduces the exact solution for $\mu$-almost every parameter from
that iteration onward.
The numerical experiments illustrate convergence for parameter-dependent
problems and finite-step recovery up to roundoff for parameter-independent
problems with finite-dimensional solution families.
The observed error-energy decay can be substantially faster than the
general theoretical upper bounds suggest.

\appendix

\section{Measurability of the iteration and generated spaces}
\label{app:measurability}

This appendix verifies the measurability needed to treat the VS error energies as random variables and to take the conditional expectations used in Section~\ref{sec:convergence}.
We establish measurability of the solution map and the iterates to support the conditional energy calculation in Proposition~\ref{prop:energy-identity}, and then construct the measurable projections used in the proof of Theorem~\ref{thm:finite-term-main}.

\begin{lemma}
\label{lem:solution-measurability}
Let $u(\xi)$ be the unique solution of \eqref{eq:pvp}.
The map $(\xi,v,w)\mapsto a(v,w;\xi)$ is jointly measurable, and the solution map $u:\Omega\to\V$ is strongly measurable.
\end{lemma}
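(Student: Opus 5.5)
The plan has two parts. First I establish joint measurability of $(\xi,v,w)\mapsto a(v,w;\xi)$ as a Carath\'eodory function. Then I obtain strong measurability of $u$ from Pettis' theorem, using weak measurability derived through a Galerkin approximation.

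\textbf{Joint measurability.} For fixed $\xi$, the map $(v,w)\mapsto a(v,w;\xi)$ is continuous on $\V\times\V$. This follows from bilinearity and \eqref{eq:bounded}:
\[
|a(v,w;\xi)-a(v',w';\xi)|\le M\bigl(\|v-v'\|_{\V}\|w\|_{\V}+\|v'\|_{\V}\|w-w'\|_{\V}\bigr).
\]
For fixed $(v,w)$, the map $\xi\mapsto a(v,w;\xi)$ is Borel by assumption. Since $\V\times\V$ is a separable metric space, the standard Carath\'eodory lemma gives joint $\mathcal B\otimes\mathcal B(\V)\otimes\mathcal B(\V)$-measurability, and on a separable space this product $\sigma$-algebra coincides with $\mathcal B(\V\times\V)$. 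If I want to spell this out rather than cite it, I would fix a countable dense set $\{d_i\}\subset\V\times\V$. Writing $i_n(v,w)$ for the first index $i$ with $\|(v,w)-d_i\|<1/n$ gives a measurable countable partition. The functions $(\xi,v,w)\mapsto a(d_{i_n(v,w)};\xi)$ are then measurable, and they converge pointwise to $a(v,w;\xi)$ by continuity in $(v,w)$.

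\textbf{Strong measurability of $u$.} By Pettis' theorem, and since $\V$ is separable, it suffices to show weak measurability, namely that $\xi\mapsto (u(\xi),h)_{\V}$ is measurable for every $h\in\V$. I will obtain this by Galerkin approximation. Fix an orthonormal basis $\{e_i\}$ of $\V$, and let $V_n:=\spanop\{e_1,\dots,e_n\}$. Define $u_n(\xi)\in V_n$ as the Galerkin solution, so its coefficient vector $c_n(\xi)$ solves $B_n(\xi)c_n(\xi)=\phi_n(\xi)$. Here $(B_n)_{ij}=a(e_j,e_i;\xi)$, which is measurable by assumption, and $(\phi_n)_i=\langle f(\xi),e_i\rangle$, which is measurable because $f$ is Borel into $\Vp$ and evaluation at $e_i$ is continuous. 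By \eqref{eq:coercive}, $B_n(\xi)$ is invertible, and matrix inversion is continuous on invertible matrices. Hence $c_n$, and therefore $u_n:\Omega\to V_n\subset\V$, is measurable, and it takes values in a finite-dimensional space, so it is strongly measurable.

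Céa's lemma gives
\[
\|u(\xi)-u_n(\xi)\|_{\V}\le\frac{M}{\alpha}\,\dist_{\V}(u(\xi),V_n)\to0
\]
for each $\xi$. Thus $u$ is a pointwise limit of strongly measurable functions and is therefore strongly measurable. This even bypasses Pettis' theorem.

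\textbf{Main obstacle.} I expect the main care point to be the joint measurability step, specifically ensuring that Carath\'eodory's lemma yields measurability with respect to the product Borel structure actually used later, namely composition with $(\xi,\zeta,\omega)\mapsto(\xi,R^{(N)}(\xi),R^{(N)}(\zeta))$. Separability of $\V$ resolves this, so I would state it explicitly. The Galerkin part is routine once coercivity guarantees invertibility of $B_n(\xi)$ uniformly.
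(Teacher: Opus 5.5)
Your proposal is correct and takes essentially the same route as the paper. You obtain joint measurability from the Carath\'eodory theorem on the separable space $\V\times\V$, using continuity from \eqref{eq:bounded}, and strong measurability of $u$ as the pointwise limit of measurable Galerkin solutions via C\'ea's lemma; the paper also treats finite-dimensional $\V$ separately, which your argument covers trivially, and you make explicit that the product $\sigma$-algebra coincides with the Borel one, which the paper leaves implicit.
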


\begin{proof}
For fixed $\xi$ and vectors $v,v',w,w'\in\V$, apply \eqref{eq:bounded} to $a(v-v',w;\xi)$ and $a(v',w-w';\xi)$, whose sum is $a(v,w;\xi)-a(v',w';\xi)$.
Both terms tend to zero as $(v',w')\to(v,w)$, so $(v,w)\mapsto a(v,w;\xi)$ is continuous.
For each fixed pair $(v,w)$, the map $\xi\mapsto a(v,w;\xi)$ is Borel measurable.
The Carath\'eodory joint measurability theorem on the separable space $\V\times\V$ therefore applies, so $(\xi,v,w)\mapsto a(v,w;\xi)$ is jointly measurable.

If $\V$ is finite-dimensional, choose an orthonormal basis $e_1,\ldots,e_r$ of $\V$, where $r=\dim\V$.
In this basis, the coefficient vector $\mathbf c_r(\xi)$ of the solution satisfies $A_r(\xi)\mathbf c_r(\xi)=b_r(\xi)$, where $(A_r(\xi))_{ij}=a(e_j,e_i;\xi)$ and $(b_r(\xi))_i=\langle f(\xi),e_i\rangle$.
Because the basis vectors are fixed, the entries $a(e_j,e_i;\xi)$ and $\langle f(\xi),e_i\rangle$ are Borel measurable functions of $\xi$.
For every $\mathbf z=(z_1,\ldots,z_r)^T\in\mathbb R^r$, \eqref{eq:coercive} gives $\mathbf z^TA_r(\xi)\mathbf z=a(\compactsum_jz_je_j,\compactsum_jz_je_j;\xi)\ge\alpha\|\mathbf z\|_2^2$, so $A_r(\xi)\mathbf z=0$ forces $\mathbf z=0$, and $A_r(\xi)$ is therefore invertible.
Since $(A,b)\mapsto A^{-1}b$ is continuous for invertible $A$, the composition $\xi\mapsto A_r(\xi)^{-1}b_r(\xi)=\mathbf c_r(\xi)$ is measurable, and $u(\xi)=\compactsum_{j=1}^r(\mathbf c_r(\xi))_je_j$ is strongly measurable.

In the infinite-dimensional case, we apply the preceding matrix argument on Galerkin spaces and then pass to the exact solution by approximation.
Separability provides a countable orthonormal basis $\{e_j\}_{j\ge1}$ of $\V$, whose finite spans $V_m:=\spanop\{e_1,\ldots,e_m\}$ have dense union in $\V$.
For each $\xi$, let $u_m^{\mathrm{Gal}}(\xi)\in V_m$ be the Galerkin solution satisfying $a(u_m^{\mathrm{Gal}}(\xi),v;\xi)=\langle f(\xi),v\rangle$ for every $v\in V_m$.
In the chosen basis, this equation becomes $A_m(\xi)\mathbf c_m(\xi)=b_m(\xi)$, where $(A_m(\xi))_{ij}=a(e_j,e_i;\xi)$ and $(b_m(\xi))_i=\langle f(\xi),e_i\rangle$.
As in the finite-dimensional case, $A_m(\xi)$ is invertible because $\mathbf z^TA_m(\xi)\mathbf z\ge\alpha\|\mathbf z\|_2^2$ for every $\mathbf z\in\mathbb R^m$.
The coefficient map $\xi\mapsto\mathbf c_m(\xi)=A_m(\xi)^{-1}b_m(\xi)$ is a composition of the Borel measurable map $\xi\mapsto(A_m(\xi),b_m(\xi))$ and the continuous map $(A,b)\mapsto A^{-1}b$, so the finite expansion $u_m^{\mathrm{Gal}}(\xi)=\compactsum_{j=1}^m(\mathbf c_m(\xi))_je_j$ is strongly measurable.
To prove that $u$ is strongly measurable by passing to the limit of $u_m^{\mathrm{Gal}}$, apply C\'ea's lemma with the bounds \eqref{eq:coercive}--\eqref{eq:bounded} to obtain
\begin{equation}
\begin{array}{ll}
\|u(\xi)-u_m^{\mathrm{Gal}}(\xi)\|_{\V}
\le \frac{M}{\alpha}
\inf_{v\in V_m}\|u(\xi)-v\|_{\V}.
\end{array}
\label{eq:galerkin-cea-bound}
\end{equation}
For each fixed $\xi$, the increasing spaces $V_m$ have dense union in $\V$, so $\inf_{v\in V_m}\|u(\xi)-v\|_{\V}\to0$ and $u_m^{\mathrm{Gal}}(\xi)\to u(\xi)$ in $\V$ by \eqref{eq:galerkin-cea-bound}.
Thus $u$ is strongly measurable as the pointwise limit of the strongly measurable maps $u_m^{\mathrm{Gal}}$.
\end{proof}

\begin{proposition}
\label{prop:iteration-measurability}
Let $\{u_N\}$, $\{g_N\}$, and $\{\eta_N\}$ be generated by Algorithm~\ref{alg:VS} for the problem \eqref{eq:pvp}.
Let $R^{(N)}$, $E_N$, and $\cE_N$ be defined by \eqref{eq:error-definition} and \eqref{eq:error-energy-definitions}.
Then, for every $N\ge0$, the map $R^{(N)}:\Omega\times\Theta\to\V$ is strongly measurable with respect to $\mathcal B\otimes\F_N$, the map $E_N:\Omega\times\Theta\to\mathbb R$ is measurable with respect to the same product $\sigma$-algebra, and $\cE_N:\Theta\to\mathbb R$ is $\F_N$-measurable.
For every $N\ge1$, $g_N:\Theta\to\V$ is $\F_N$-measurable. 
\end{proposition}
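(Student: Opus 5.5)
Proof by induction on $N$, proving jointly the statements: (i) $g_N:\Theta\to\V$ is $\F_N$-measurable; (ii) for each $s\le N$, $\eta_s:\Omega\times\Theta\to\RR$ is $\mathcal B\otimes\F_N$-measurable (indeed $\mathcal B\otimes\F_s$); (iii) $R^{(N)}$ is strongly $\mathcal B\otimes\F_N$-measurable. The base case $N=0$ is $R^{(0)}=u$, which is strongly $\mathcal B$-measurable by Lemma~\ref{lem:solution-measurability} and constant in $\omega$. The measurability of $E_N$ and $\cE_N$ then follows from (iii) at each step.

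For the inductive step, suppose (iii) holds at step $N-1$. By \eqref{eq:direction-as-error}, $g_N(\omega)=R^{(N-1)}(\Xi_N(\omega),\omega)$. The map $\omega\mapsto(\Xi_N(\omega),\omega)$ is measurable from $(\Theta,\F_N)$ into $(\Omega\times\Theta,\mathcal B\otimes\F_{N-1})$, because $\Xi_N$ is $\F_N$-measurable and $\F_{N-1}\subset\F_N$. Composing with $R^{(N-1)}$ gives (i), strong measurability being preserved under composition with a measurable map.

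Next take $\eta_N$ on the set $\{g_N\ne0\}\in\F_N$, using \eqref{eq:eta-residual}. Its numerator is $a(R^{(N-1)}(\xi),g_N;\xi)$ and its denominator is $a(g_N,g_N;\xi)$. Both compose the jointly measurable map $(\xi,v,w)\mapsto a(v,w;\xi)$ of Lemma~\ref{lem:solution-measurability} with the $\mathcal B\otimes\F_N$-measurable map $(\xi,\omega)\mapsto(\xi,R^{(N-1)}(\xi,\omega),g_N(\omega))$. The denominator is positive on that set, so the quotient, extended by zero off it, is measurable. Finally \eqref{eq:error-update} gives $R^{(N)}=R^{(N-1)}-\eta_Ng_N$, and scalar multiplication and addition are continuous, so (iii) holds at step $N$.

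For the energies, $E_N(\xi,\omega)=a(R^{(N)},R^{(N)};\xi)$ is jointly measurable by the same composition argument. It is nonnegative and bounded by $F^2/\alpha$, since the energy monotonicity in Proposition~\ref{prop:energy-identity} uses only algebra and not measurability. Tonelli's theorem then shows that $\cE_N=\int E_N\,\dd\mu$ is $\F_N$-measurable.

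The main obstacle is the measurability of these compositions. It requires the triple map into $\Omega\times\V\times\V$ to be measurable for the product Borel structure. Because $\V$ is separable, $\mathcal B(\V\times\V)=\mathcal B(\V)\otimes\mathcal B(\V)$, so measurability of each component suffices. Strong measurability of a $\V$-valued map then coincides with Borel measurability; this uses Pettis's theorem together with separability.
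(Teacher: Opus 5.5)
Your proposal is correct and follows essentially the same route as the paper's proof. Both argue by induction, composing $R^{(N-1)}$ with the sampling map $\omega\mapsto(\Xi_N(\omega),\omega)$ to get $g_N$, then composing with the jointly measurable form $(\xi,v,w)\mapsto a(v,w;\xi)$, extending by zero on $\{g_N=0\}$, and applying Tonelli to the nonnegative $E_N$. Your remarks on $\mathcal B(\V\times\V)=\mathcal B(\V)\otimes\mathcal B(\V)$ and Pettis's theorem only make explicit what the paper leaves implicit, and the bound $E_N\le F^2/\alpha$ is harmless but unnecessary, since nonnegativity already suffices.
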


\begin{proof}
To establish joint measurability while keeping track of the sampled coordinates, we prove by induction that $R^{(N)}$ is strongly measurable with respect to $\mathcal B\otimes\F_N$.
At $N=0$, Lemma~\ref{lem:solution-measurability} shows that $u:\Omega\to\V$ is strongly measurable, so $R^{(0)}(\xi,\omega)=u(\xi)$ is strongly measurable with respect to $\mathcal B\otimes\F_0$ because it does not depend on the sampling history.
For the induction step, assume that $R^{(N-1)}$ is strongly measurable with respect to $\mathcal B\otimes\F_{N-1}$ and consider the sampling map $\omega\longmapsto(\Xi_N(\omega),\omega)$.
The first coordinate $\Xi_N$ is $\F_N$-measurable, while the second coordinate $\omega\mapsto\omega$ is measurable from $(\Theta,\F_N)$ to $(\Theta,\F_{N-1})$ because $\F_{N-1}\subset\F_N$.
The sampling map is therefore measurable from $(\Theta,\F_N)$ to $(\Omega\times\Theta,\mathcal B\otimes\F_{N-1})$.
The direction identity \eqref{eq:direction-as-error} from Lemma~\ref{lem:basic-error-relations} expresses $g_N(\omega)=R^{(N-1)}(\Xi_N(\omega),\omega)$ as the composition of $R^{(N-1)}$ with the sampling map, so $g_N$ is $\F_N$-measurable.
For the coefficient update, both $(\xi,\omega)\mapsto R^{(N-1)}(\xi,\omega)$ and $(\xi,\omega)\mapsto g_N(\omega)$ are measurable with respect to $\mathcal B\otimes\F_N$.
Composing these maps with the jointly measurable map $(\xi,v,w)\mapsto a(v,w;\xi)$ shows that $a(R^{(N-1)}(\xi,\omega),g_N(\omega);\xi)$ and $a(g_N(\omega),g_N(\omega);\xi)$, the numerator and denominator in \eqref{eq:eta-residual}, are measurable with respect to the same product $\sigma$-algebra.
Where $g_N(\omega)\ne0$, \eqref{eq:coercive} gives $a(g_N(\omega),g_N(\omega);\xi)\ge\alpha\|g_N(\omega)\|_{\V}^2>0$, so the quotient in \eqref{eq:eta-residual} is measurable there.
The set $\{g_N=0\}$ is measurable because $g_N$ is measurable, and the algorithm sets $\eta_N=0$ on this set, giving a $\mathcal B\otimes\F_N$-measurable function on all of $\Omega\times\Theta$.
The update \eqref{eq:error-update}, i.e., $R^{(N)}=R^{(N-1)}-\eta_Ng_N$, preserves strong measurability with respect to $\mathcal B\otimes\F_N$ and closes the induction.
The pointwise energy $E_N(\xi,\omega)=a(R^{(N)}(\xi,\omega),R^{(N)}(\xi,\omega);\xi)$ in \eqref{eq:error-energy-definitions} is $\mathcal B\otimes\F_N$-measurable as a composition of $R^{(N)}$ with the jointly measurable bilinear form.
Because $E_N$ is nonnegative, the parameter integral $\cE_N(\omega)=\int_\Omega E_N(\xi,\omega)\dd\mu(\xi)$ is then $\F_N$-measurable.
\end{proof}

It remains to verify the measurability of the $\V$-orthogonal projections $P_N$ onto the random approximation spaces $Z_N$ constructed in the proof of Theorem~\ref{thm:finite-term-main}.
Since these spaces contain the sampled directions, we construct their orthonormal bases by a measurable Gram--Schmidt procedure that also allows vanishing or linearly dependent input vectors.

\begin{lemma}
\label{lem:measurable-GS}
Let $m\ge1$ and let $h_1,\ldots,h_m:\Theta\to\V$ be strongly measurable random vectors.
Then there are strongly measurable random vectors $e_1,\ldots,e_m:\Theta\to\V$ such that, for every $\omega\in\Theta$, the nonzero vectors among $e_1(\omega),\ldots,e_m(\omega)$ form an orthonormal basis of $\spanop\{h_1(\omega),\ldots,h_m(\omega)\}$.
\end{lemma}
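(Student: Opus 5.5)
The plan is a Gram--Schmidt recursion on $k$ in which every division is replaced by a measurable case split on whether the current residual vanishes. I will construct $e_1,\ldots,e_m$ inductively so that, for every $\omega$, the nonzero vectors among $e_1(\omega),\ldots,e_k(\omega)$ form an orthonormal basis of $\spanop\{h_1(\omega),\ldots,h_k(\omega)\}$. For the first vector, set $e_1:=h_1/\|h_1\|_{\V}$ on the set $\{h_1\ne0\}$ and $e_1:=0$ on $\{h_1=0\}$. The norm $\omega\mapsto\|h_1(\omega)\|_{\V}$ is measurable because it composes $h_1$ with a continuous map, so $\{h_1=0\}$ is measurable. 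On the complement, the quotient is a product of a measurable scalar with a strongly measurable vector. Hence $e_1$ is strongly measurable.

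For the inductive step, suppose $e_1,\ldots,e_{k-1}$ have been constructed with the stated property. I will form the residual
\[
\tilde h_k:=h_k-\compactsum_{j=1}^{k-1}(h_k,e_j)_{\V}\,e_j .
\]
Each coefficient $(h_k,e_j)_{\V}$ is a measurable scalar, since it is a continuous function of the pair $(h_k,e_j)$, and the pair is strongly measurable into the separable space $\V\times\V$. Finite sums and scalar multiples preserve strong measurability, so $\tilde h_k$ is strongly measurable. Zero vectors among the $e_j$ contribute nothing to the sum. Therefore $\tilde h_k(\omega)$ equals $h_k(\omega)$ minus its orthogonal projection onto $\spanop\{h_1(\omega),\ldots,h_{k-1}(\omega)\}$.

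I then set $e_k:=\tilde h_k/\|\tilde h_k\|_{\V}$ on $\{\tilde h_k\ne0\}$ and $e_k:=0$ otherwise, which is again measurable by the same case split used for $e_1$. If $\tilde h_k(\omega)=0$, then $h_k(\omega)$ already lies in the previous span, and the existing nonzero vectors remain an orthonormal basis of the enlarged span. If $\tilde h_k(\omega)\ne0$, then $e_k(\omega)$ is a unit vector orthogonal to the previous nonzero $e_j(\omega)$. Moreover, $h_k(\omega)$ is a combination of $e_k(\omega)$ and those previous vectors, so the spans agree at stage $k$. Taking $k=m$ completes the proof.

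I expect the only delicate point to be measurability of the inner products and norms of strongly measurable maps. I will handle it by viewing each such quantity as a continuous function applied to a strongly measurable map into a separable space, using the same continuity-plus-composition reasoning as in Lemma~\ref{lem:solution-measurability}. Everything else is pointwise linear algebra.
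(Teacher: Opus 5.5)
Your proposal is correct and follows essentially the same route as the paper: a Gram--Schmidt recursion in which each residual $h_k-\sum_{j<k}(h_k,e_j)_{\V}e_j$ is normalized on the measurable set where it is nonzero and set to zero elsewhere. In both, measurability of the coefficients and norms comes from composing strongly measurable maps with continuous functions; your explicit inductive invariant (the nonzero $e_j(\omega)$, $j\le k$, form an orthonormal basis of $\spanop\{h_1(\omega),\ldots,h_k(\omega)\}$) simply spells out the pointwise linear algebra that the paper leaves implicit.
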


\begin{proof}
At step $j$ of the Gram--Schmidt procedure, subtract the previously constructed projections from $h_j$ to obtain the remainder $h_j-\compactsum_{i<j}(h_j,e_i)_{\V}e_i$, where the terms with $e_i=0$ vanish.
Each scalar coefficient $(h_j,e_i)_{\V}$ is measurable, so this finite linear combination of strongly measurable vectors is strongly measurable.
Since its norm is measurable, $e_j$ is defined on two measurable sets by dividing the remainder by its norm where it is nonzero and setting $e_j=0$ otherwise.
The normalization map $v\mapsto v/\|v\|_{\V}$ is continuous on $\V\setminus\{0\}$, so $e_j$ is strongly measurable on both sets and hence on $\Theta$.
\end{proof}

For each fixed $N$, the space $Z_N=W_{p_N}+G_N$ constructed in the proof of Theorem~\ref{thm:finite-term-main} combines the deterministic approximation subspace $W_{p_N}\subset\V$ from Proposition~\ref{prop:solution-approx} with the random direction space $G_N\subset\V$ in \eqref{eq:generated-direction-space}, i.e., $G_N=\operatorname{span}\{g_1,\ldots,g_N\}$.
Here $p_N$ is the integer in \eqref{eq:pN-choice}, i.e., $p_N=\lceil\log_2(N+2)\rceil$.
Choose an orthonormal basis of the deterministic space $W_{p_N}$ independently of the sampling history, so that this basis together with $g_1,\ldots,g_N$ forms a finite list of $\F_N$-measurable vectors spanning $Z_N$.
For a nonempty list, the vectors $e_j:\Theta\to\V$ from Lemma~\ref{lem:measurable-GS} are $\F_N$-measurable, and their nonzero members form an orthonormal basis of $Z_N$ for every sampling history.
If the list is empty, $Z_N=\{0\}$ and no basis vectors are needed.

The resulting $\V$-orthogonal projection is $P_N(\omega)v=\compactsum_j(v,e_j(\omega))_{\V}e_j(\omega)$, where an empty sum is understood to be zero.
For every fixed $v\in\V$, this is a finite sum of $\F_N$-measurable vectors, so the projection $P_N$ satisfies \eqref{eq:projection-measurability}.
The projected error and remainder in \eqref{eq:XY-decomposition}, i.e., $X_N=P_NR^{(N)}$ and $Y_N=(I-P_N)R^{(N)}$, are therefore well defined.

\section{Dimension bounds for local inverse expansions}
\label{app:neumann}

The dimension estimate in Proposition~\ref{prop:solution-approx} counts parameter monomials rather than ordered products of the component operators.
We justify this count without assuming that the component operators commute, and record the truncation bound for the same local inverse expansion.

To isolate the scalar parameter dependence in the local inverse expansion, we start from the separated representations \eqref{eq:separable-a}--\eqref{eq:separable-f} and the operators $A(\xi):\V\to\Vp$ and $A_q:\V\to\Vp$ in \eqref{eq:operator-definitions}.
At a fixed $\zeta\in\Omega$, the proof of Proposition~\ref{prop:solution-approx} uses the operators $B_q^{(\zeta)}\in\mathcal L(\V)$ and vectors $b_\ell^{(\zeta)}\in\V$, i.e., $B_q^{(\zeta)}=A(\zeta)^{-1}A_q$ and $b_\ell^{(\zeta)}=A(\zeta)^{-1}F_\ell$, for $q=1,\ldots,Q$ and $\ell=0,\ldots,L$.
With $B_q:=B_q^{(\zeta)}$ and $s_q(\xi):=\theta_q(\xi)-\theta_q(\zeta)$, all parameter dependence in the perturbation operator $T_\zeta(\xi)\in\mathcal L(\V)$ lies in the scalar coefficients of $T_\zeta(\xi)=\compactsum_{q=1}^{Q}s_q(\xi)B_q$.
Here $T_\zeta(\xi)$ is the operator introduced in the proof of Proposition~\ref{prop:solution-approx}, i.e., $T_\zeta(\xi)=A(\zeta)^{-1}(A(\xi)-A(\zeta))$.
For $k\ge1$, we have
\begin{equation*}
\begin{array}{ll}
T_\zeta(\xi)^k
=\compactsum_{i_1,\ldots,i_k=1}^{Q}
\bigl(\prod_{j=1}^{k}s_{i_j}(\xi)\bigr)
B_{i_1}\cdots B_{i_k}.
\end{array}
\end{equation*}
In the expansion of $T_\zeta(\xi)^k$, the scalar coefficient $\prod_{j=1}^k s_{i_j}(\xi)$ depends only on how often each index occurs, so equal monomials can be collected without changing the order of the factors in $B_{i_1}\cdots B_{i_k}$.
Write $s(\xi):=(s_1(\xi),\ldots,s_Q(\xi))$, let a multi-index $\nu=(\nu_1,\ldots,\nu_Q)\in\{0,1,\ldots\}^Q$ with $|\nu|=\nu_1+\cdots+\nu_Q=k$ record these multiplicities, and set $s(\xi)^\nu:=\prod_{q=1}^{Q}s_q(\xi)^{\nu_q}$.
Let $\mathcal W(\nu)$ be the set of words over $\{1,\ldots,Q\}$ in which the letter $q$ occurs exactly $\nu_q$ times, so all words in this set have the same scalar coefficient $s(\xi)^\nu$.
For $\ell=0,\ldots,L$, the coefficient of $s(\xi)^\nu$ in $T_\zeta(\xi)^k b_\ell^{(\zeta)}$ is $v_{\zeta,\nu,\ell}=\compactsum_{(i_1,\ldots,i_k)\in\mathcal W(\nu)} B_{i_1}\cdots B_{i_k}b_\ell^{(\zeta)}$, as introduced in the proof of Proposition~\ref{prop:solution-approx}.
The case $k=0$ corresponds to $\nu=0$ and $\mathcal W(0)$ containing only the empty word, so $v_{\zeta,0,\ell}=b_\ell^{(\zeta)}$.
The expansion in the coefficient vectors $v_{\zeta,\nu,\ell}$ is therefore
\[
\begin{array}{ll}
T_\zeta(\xi)^k b_\ell^{(\zeta)}
=\compactsum_{|\nu|=k}s(\xi)^\nu v_{\zeta,\nu,\ell}.
\end{array}
\]
Since the number of required monomials of degree at most $p$ is $\compactsum_{k=0}^{p}\binom{Q+k-1}{k}=\binom{Q+p}{p}$, accounting for the $L+1$ right-hand-side components and the $K$ reference parameters gives the dimension estimate in Proposition~\ref{prop:solution-approx}.
The operator order affects the coefficient vectors but introduces no additional monomials, so this count does not require commutativity.

The remaining question is how accurately $W_p$ approximates $u(\xi)$, which depends on the omitted terms of the inverse expansion.
The Neumann-series argument in the proof of Proposition~\ref{prop:solution-approx} shows that whenever $\|T_\zeta(\xi)\|_{\mathcal L(\V)}\le\rho<1$,
\[
\begin{array}{ll}
\bigl\|(I+T_\zeta(\xi))^{-1}
-\compactsum_{k=0}^{p}(-T_\zeta(\xi))^k\bigr\|_{\mathcal L(\V)}
\le\frac{\rho^{p+1}}{1-\rho}.
\end{array}
\]
For the local expansions \eqref{eq:local-inverse-expansion} in the proof of Proposition~\ref{prop:solution-approx}, $\rho=1/2$ and $\|\mathcal R_{\zeta,p}(\xi)\|_{\mathcal L(\V)}\le2^{-p}$.
The vector $A(\zeta)^{-1}f(\xi)$ has norm at most $F/\alpha$, since $\|A(\zeta)^{-1}\|_{\mathcal L(\Vp,\V)}\le1/\alpha$ by \eqref{eq:coercive} and $\|f(\xi)\|_{\Vp}\le F$ by \eqref{eq:fbound}.
The approximation error $\mathcal R_{\zeta,p}(\xi)A(\zeta)^{-1}f(\xi)$ in $\V$ therefore satisfies
\[
\begin{array}{ll}
\bigl\|
\bigl[
(I+T_\zeta(\xi))^{-1}
-\compactsum_{k=0}^{p}(-T_\zeta(\xi))^k
\bigr]
A(\zeta)^{-1}f(\xi)
\bigr\|_{\V}
\le
\frac{F}{\alpha}2^{-p}.
\end{array}
\]

\section{Constants in the correlation bound}
\label{app:tail}

This appendix specifies the threshold constant $C_0$ in \eqref{eq:above-tail} used in Proposition~\ref{prop:tail-or-decrease}.
For this comparison, recall the parameter-averaged error energy $\cE_N$ in \eqref{eq:error-energy-definitions}, the dimension bound $D_N$ and approximation tolerance $\tau_N$ in \eqref{eq:abstract-approx}, and the error components $X_N,Y_N$ in \eqref{eq:XY-decomposition}, i.e., $R^{(N)}=X_N+Y_N$.
The proof of Proposition~\ref{prop:tail-or-decrease} measures $X_N$ by its squared $L^2(\Omega,\mu;\V)$ norm $S_N=\int_\Omega\|X_N(\xi)\|_{\V}^2\dd\mu(\xi)$.
To ensure that $2M\sqrt{S_N}\tau_N+M\tau_N^2\le\alpha S_N/(2\sqrt{D_N})$ in \eqref{eq:tail-subtraction}, first bound $S_N$ below in terms of $\cE_N$.
Since the claimed correlation bound is immediate when $\cE_N=0$, restrict attention to $\cE_N>0$ under the threshold condition \eqref{eq:above-tail}, i.e., $\cE_N\ge C_0D_N\tau_N^2$.
For the comparison, the upper bound $\cE_N\le2M(S_N+\tau_N^2)$ established in the proof of Proposition~\ref{prop:tail-or-decrease}, together with $D_N\ge1$ and $C_0\ge4M$, gives
\[
\begin{array}{ll}
S_N\ge\frac{\cE_N}{2M}-\tau_N^2
\ge\frac{\cE_N}{2M}-\frac{\cE_N}{C_0D_N}
\ge\frac{\cE_N}{4M}>0.
\end{array}
\]
The $L^2(\mu\otimes\mu)$-norm of $a(X_N(\xi),X_N(\zeta);\xi)$ is at least $\mathcal A_N:=\alpha S_N/\sqrt{D_N}>0$ by \eqref{eq:main-correlation-norm}.
For the other terms in the expansion using \eqref{eq:XY-decomposition}, the proof of Proposition~\ref{prop:tail-or-decrease} bounds the $L^2(\mu\otimes\mu)$-norm of each of $a(X_N(\xi),Y_N(\zeta);\xi)$ and $a(Y_N(\xi),X_N(\zeta);\xi)$ by $M\sqrt{S_N}\tau_N$, and that of $a(Y_N(\xi),Y_N(\zeta);\xi)$ by $M\tau_N^2$.
These three norm bounds add up to $\mathcal B_N:=2M\sqrt{S_N}\tau_N+M\tau_N^2$.
Since $S_N\ge\cE_N/(4M)$ by \eqref{eq:SN-lower} and $\cE_N\ge C_0D_N\tau_N^2$ by \eqref{eq:above-tail}, $D_N\tau_N^2/S_N\le4M/C_0$.
The ratios of the two terms $2M\sqrt{S_N}\tau_N$ and $M\tau_N^2$ in $\mathcal B_N$ to $\mathcal A_N=\alpha S_N/\sqrt{D_N}$ then satisfy the following bounds, where $D_N\ge1$ is used for the second term.
\begin{equation}
\begin{array}{ll}
\frac{2M\sqrt{S_N}\tau_N}{\mathcal A_N}
\le
\frac{4M\sqrt M}{\alpha\sqrt{C_0}}
\quad\mbox{and}\quad
\frac{M\tau_N^2}{\mathcal A_N}
\le
\frac{4M^2}{\alpha C_0}.
\end{array}
\label{eq:tail-normalized-bounds}
\end{equation}
With $C_0=\max\Bigl\{4M,\frac{256M^3}{\alpha^2},\frac{16M^2}{\alpha}\Bigr\}$, each ratio in \eqref{eq:tail-normalized-bounds} is at most $1/4$, and $\mathcal B_N/\mathcal A_N\le1/2$.
Because $\mathcal B_N\le\mathcal A_N/2$, the correlation functional $J_N$ in \eqref{eq:correlation-functional} satisfies $\sqrt{J_N}\ge\mathcal A_N-\mathcal B_N\ge\mathcal A_N/2$ by \eqref{eq:tail-subtraction}.
These constants are used only to establish a uniform threshold and are not optimized.

\section{An inverse-logarithmic bound for a scalar recursion}
\label{app:log}

The following scalar comparison lemma turns the recursion in the proof of Theorem~\ref{thm:abstract-main} into the inverse-logarithmic expected-error bound.
In that application, $m_N=\E[\cE_N]$, where $\cE_N$ is the parameter-averaged error energy in \eqref{eq:error-energy-definitions}.

\begin{lemma}
\label{lem:scalar-log-bound}
Let $\{m_N\}$ be a bounded nonnegative sequence and write $(x)_+:=\max\{x,0\}$.
Suppose that there are constants $b,c>0$ such that, for all sufficiently large $N$,
\begin{equation}
\begin{array}{ll}
m_{N+1}\le m_N-\frac{c}{N+1}
(m_N-\frac{b}{N+1})_+^2.
\end{array}
\label{eq:scalar-log-recursion}
\end{equation}
Then there is a constant $C>0$ such that
\[
\begin{array}{ll}
m_N\le \frac{C}{\log(N+2)}
\end{array}
\]
for all sufficiently large $N$.
\end{lemma}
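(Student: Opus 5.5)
Since the recursion makes $m_N$ nonincreasing for large $N$, we may assume it holds for all $N\ge N_0$. We compare $m_N$ with the moving threshold $b/(N+1)$ and split the index range dyadically. Set $\varepsilon_N:=m_N$. There are two regimes.

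\emph{Below-threshold regime.} If $m_N\le 2b/(N+1)$, the claim holds with room to spare, because $2b/(N+1)\ll 1/\log(N+2)$. By monotonicity, every later $m_{N'}$ is then also at most $2b/(N+1)$. This alone does not give $1/\log$ for very large $N'$, so the argument must be organized in blocks rather than by a single index.

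\emph{Block argument.} Consider the dyadic blocks $I_j=[2^j,2^{j+1})$. Fix $j$, let $\bar m:=m_{2^{j+1}}$, and note that $m_N\ge\bar m$ throughout $I_j$ by monotonicity. Suppose $\bar m\ge 2b/2^{j}$. Then for every $N\in I_j$ we have $m_N-b/(N+1)\ge m_N/2$, so the decrease per step is at least $\frac{c}{4(N+1)}m_N^2\ge\frac{c}{4(N+1)}\bar m^2$. Summing over $I_j$, where $\sum_{N\in I_j}1/(N+1)\ge\log 2/2\ge 1/3$, gives
\[
m_{2^j}-m_{2^{j+1}}\ge \tfrac{c}{12}\,m_{2^{j+1}}^2 .
\]
Writing $x_j:=m_{2^j}$, this reads $x_{j+1}\le x_j-c' x_{j+1}^2$ whenever $x_{j+1}\ge 2b\,2^{-j}$.

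\emph{Converting to a rate.} The recursion $x_{j+1}+c'x_{j+1}^2\le x_j$, with $x$ bounded, implies $1/x_{j+1}\ge 1/x_j+c''$, where $c''=c'/(1+c'\sup x)$. This holds because
\[
\frac{1}{x_{j+1}}-\frac{1}{x_j}\ge \frac{1}{x_{j+1}}-\frac{1}{x_{j+1}(1+c'x_{j+1})}=\frac{c'}{1+c'x_{j+1}}.
\]
Iterating, $x_j\lesssim 1/j$ on any run of blocks where the threshold condition holds. If the condition fails at some block, then $x_{j+1}<2b\,2^{-j}$, which is far below $1/j$, and later blocks restart from this tiny value. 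Either way, the reciprocal bound $1/x_j\ge \min(\ldots)$ propagates, giving $x_j\le C/j$ for all large $j$. Rigorously, one proves by induction that $x_j\le \max\{C/j,\;4b\,2^{-j}\}$, and the second term is dominated by the first for large $j$.

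Finally, for $N\in I_j$ we have $j\ge \log_2 N-1$ and $m_N\le x_j\le C/j$, which yields $m_N\le C'/\log(N+2)$.

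The main obstacle is the bookkeeping in the induction across blocks where the threshold condition alternately holds and fails. I would handle it by proving directly the statement $1/x_{j+1}\ge\min\{1/x_j+c'',\;2^{j}/(2b)\}$ and inducting on the quantity $\min(j c'',\,\cdot)$.
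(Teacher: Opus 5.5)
Your argument is correct, but it takes a different route from the paper's. The paper proves the bound index by index using an explicit discrete supersolution. With $A=M_0(1+\log(N_0+2))$ and $y_N=A/(1+\log(N+2))$, it shows three things: the one-step map $\Phi_N(x)=x-\frac{c}{N+1}(x-\frac{b}{N+1})_+^2$ is nondecreasing on $[0,M_0]$ (this is what requires $2cM_0/(N_0+1)\le1$); $b/(N+1)\le y_N/2$; and $\Phi_N(y_N)\le y_{N+1}$. It then propagates $m_{N+1}\le\Phi_N(m_N)\le\Phi_N(y_N)\le y_{N+1}$.

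You use only that $m_N$ is nonincreasing. Bounding each step in a dyadic block from below by the block's right endpoint produces the implicit inequality $x_j-x_{j+1}\ge\frac{c}{12}x_{j+1}^2$. Its reciprocal form $1/x_{j+1}\ge1/x_j+c''$ holds without any step-size restriction, so you never need monotonicity of $\Phi_N$. The moving threshold is handled by a dichotomy: either the implicit inequality holds, or $x_{j+1}$ is already exponentially small. The paper instead builds the threshold into the comparison sequence. Your version shows where the logarithm comes from: each dyadic block carries harmonic weight at least $1/2$ and acts like one step of $x\mapsto x-c'x^2$. The paper's version gives an explicit constant $C=A$ and a bound at every index, without interpolating between dyadic indices.

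On the bookkeeping you flag, the max-form induction $x_j\le\max\{C/j,4b2^{-j}\}$ does not close on its own in a range where $4b2^{-j}>C/j$, so fix $C$ first. Take $C\ge\max\{2b,\,1/c'',\,j_0\sup_j x_j\}$, where $j_0\ge1$ and $2^{j_0}\ge N_0$. Then:
\begin{itemize}
\item the base case $x_{j_0}\le C/j_0$ is immediate;
\item if $x_{j+1}\ge2b2^{-j}$, then $1/x_{j+1}\ge 1/x_j+c''\ge j/C+1/C$;
\item otherwise $x_{j+1}<4b2^{-(j+1)}\le 2b/(j+1)\le C/(j+1)$, because $2^k\ge 2k$ for $k\ge1$.
\end{itemize}
This gives $x_j\le C/j$ for all $j\ge j_0$ directly, and your final interpolation step over $N\in[2^j,2^{j+1})$ finishes the proof.
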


\begin{proof}
Since $\{m_N\}$ is bounded and nonnegative, choose a positive constant $M_0$ such that $0\le m_N\le M_0$ for every $N$.
An inverse-logarithmic comparison sequence will sharpen the bound $m_N\le M_0$ if the comparison is preserved by \eqref{eq:scalar-log-recursion}.
To carry out this comparison, choose $N_0$ sufficiently large that \eqref{eq:scalar-log-recursion} holds for every $N\ge N_0$ and
\[
\begin{array}{ll}
\frac{2cM_0}{N_0+1}\le1,\quad
2b\le M_0(N_0+1),\quad\mbox{and}\quad 
M_0\bigl(1+\log(N_0+2)\bigr)\ge\frac4c.
\end{array}
\]
With $A:=M_0\bigl(1+\log(N_0+2)\bigr)$, the sequence $y_N:=A/(1+\log(N+2))$ for $N\ge N_0$ starts at $y_{N_0}=M_0\ge m_{N_0}$ and satisfies $0<y_N\le M_0$.
With $\Phi_N(x):=x-\frac{c}{N+1}\bigl(x-\frac{b}{N+1}\bigr)_+^2$, \eqref{eq:scalar-log-recursion} becomes $m_{N+1}\le\Phi_N(m_N)$.
To propagate the comparison $m_N\le y_N$, we will show that $\Phi_N$ is nondecreasing on $[0,M_0]$ and that $\Phi_N(y_N)\le y_{N+1}$.
The definition of $\Phi_N$ gives $\Phi_N(x)=x$ for $0\le x\le b/(N+1)$, while for $b/(N+1)<x\le M_0$, the inequalities $N\ge N_0$ and $2cM_0/(N_0+1)\le1$ give
\[
\begin{array}{ll}
\Phi_N'(x)
=
1-\frac{2c}{N+1}
\bigl(x-\frac{b}{N+1}\bigr)
\ge
1-\frac{2cM_0}{N+1}
\ge0.
\end{array}
\]
Thus $\Phi_N$ is nondecreasing on $[0,M_0]$ for every $N\ge N_0$.

The ratio $(1+\log(N+2))/(N+1)$ decreases with $N$, so the threshold $b/(N+1)$ is at most $y_N/2$ for every $N\ge N_0$, as quantified by
\[
\begin{array}{ll}
\frac{b}{(N+1)y_N}
=
\frac{b(1+\log(N+2))}{A(N+1)}
\le
\frac{b}{M_0(N_0+1)}
\le\frac12.
\end{array}
\]
Since $b/(N+1)\le y_N/2$ and $y_N>0$ for every $N\ge N_0$, the positive part satisfies $(y_N-b/(N+1))_+=y_N-b/(N+1)\ge y_N/2$.
The decrease from $y_N$ to $\Phi_N(y_N)$ therefore satisfies
\[
\begin{array}{ll}
\frac{c}{N+1}
\bigl(y_N-\frac{b}{N+1}\bigr)^2
\ge
\frac{c}{4(N+1)}y_N^2.
\end{array}
\]
The decrease from $y_N$ to $y_{N+1}$, in turn, has the upper bound
\[
\begin{array}{ll}
y_N-y_{N+1}
&=A\bigl(\frac{1}{1+\log(N+2)}
-\frac{1}{1+\log(N+3)}\bigr)\\[2mm]
&=\frac{A\log\bigl(\frac{N+3}{N+2}\bigr)}
{(1+\log(N+2))(1+\log(N+3))}
\le\frac{A}{(N+2)(1+\log(N+2))^2}.
\end{array}
\]
Since $A\ge4/c$, it follows that $y_N-y_{N+1}\le cy_N^2/(4(N+1))$.
Consequently,
\[
\begin{array}{ll}
\Phi_N(y_N)
=
y_N-\frac{c}{N+1}
\bigl(y_N-\frac{b}{N+1}\bigr)^2
\le
y_{N+1}.
\end{array}
\]

The comparison already holds at $N_0$, where $m_{N_0}\le y_{N_0}$.
If $m_N\le y_N$ for some $N\ge N_0$, both arguments lie in $[0,M_0]$, where $\Phi_N$ is nondecreasing, so $\Phi_N(m_N)\le\Phi_N(y_N)$.
The recursion \eqref{eq:scalar-log-recursion} and the bound $\Phi_N(y_N)\le y_{N+1}$ therefore preserve the comparison at $N+1$ through the chain
\[
\begin{array}{ll}
m_{N+1}
\le
\Phi_N(m_N)
\le
\Phi_N(y_N)
\le
y_{N+1}.
\end{array}
\]
Thus $m_N\le y_N = \frac{A}{1+\log(N+2)}$ for every $N\ge N_0$, which proves the claimed bound with $C=A$.
\end{proof}


\small

\begin{thebibliography}{99}\bibitem{BaLi2026}
Y. Ba and Q. Li,
A variable-separation method based on the frequency domain for time-domain Maxwell's equations with random inputs,
Journal of Computational Physics, 548 (2026), Article 114579.

\bibitem{BerkoozHolmesLumley1993}
G. Berkooz, P. Holmes and J. L. Lumley,
The proper orthogonal decomposition in the analysis of turbulent flows,
Annual Review of Fluid Mechanics, 25 (1993), 539--575.

\bibitem{Benner2015}
P. Benner, S. Gugercin and K. Willcox,
A survey of projection-based model reduction methods for parametric dynamical systems,
SIAM Review, 57 (2015), 483--531.

\bibitem{Binev2011}
P. Binev, A. Cohen, W. Dahmen, R. DeVore, G. Petrova and P. Wojtaszczyk,
Convergence rates for greedy algorithms in reduced basis methods,
SIAM Journal on Mathematical Analysis, 43 (2011), 1457--1472.

\bibitem{Buffa2012}
A. Buffa, Y. Maday, A. T. Patera, C. Prud'homme and G. Turinici,
A priori convergence of the greedy algorithm for the parametrized reduced basis method,
ESAIM: Mathematical Modelling and Numerical Analysis, 46 (2012), 595--603.

\bibitem{Chamoin2017}
L. Chamoin, F. Pled, P.-E. Allier and P. Ladev{\`e}ze,
A posteriori error estimation and adaptive strategy for PGD model reduction applied to parametrized linear parabolic problems,
Computer Methods in Applied Mechanics and Engineering, 327 (2017), 118--146.

\bibitem{Chen2024}
L. Chen, Y. Chen, Q. Li and Z. Zhang,
Stochastic domain decomposition based on variable-separation method,
Computer Methods in Applied Mechanics and Engineering, 418 (2024), Article 116538.

\bibitem{ChenZhou2025}
L. Chen, Y. Chen, Q. Li and T. Zhou,
A dynamical variable-separation method for parameter-dependent dynamical systems,
SIAM Journal on Scientific Computing, 47 (2025), A1783--A1808.

\bibitem{CohenDeVore2015}
A. Cohen and R. DeVore,
Approximation of high-dimensional parametric PDEs,
Acta Numerica, 24 (2015), 1--159.

\bibitem{DeVore2013}
R. DeVore, G. Petrova and P. Wojtaszczyk,
Greedy algorithms for reduced bases in Banach spaces,
Constructive Approximation, 37 (2013), 455--466.

\bibitem{FalcoNouy2011}
A. Falc{\'o} and A. Nouy,
A proper generalized decomposition for the solution of elliptic problems in abstract form by using a functional Eckart--Young approach,
Journal of Mathematical Analysis and Applications, 376 (2011), 469--480.

\bibitem{FalcoNouy2012}
A. Falc{\'o} and A. Nouy,
Proper generalized decomposition for nonlinear convex problems in tensor Banach spaces,
Numerische Mathematik, 121 (2012), 503--530.

\bibitem{Ghanem2003}
R. G. Ghanem and P. D. Spanos,
Stochastic Finite Elements: A Spectral Approach,
Dover Publications, Mineola, 2003.

\bibitem{GreplPatera2005}
M. A. Grepl and A. T. Patera,
A posteriori error bounds for reduced-basis approximations of parametrized parabolic partial differential equations,
ESAIM: Mathematical Modelling and Numerical Analysis, 39 (2005), 157--181.

\bibitem{Hesthaven2016}
J. S. Hesthaven, G. Rozza and B. Stamm,
Certified Reduced Basis Methods for Parametrized Partial Differential Equations,
SpringerBriefs in Mathematics, Springer, Cham, 2016.

\bibitem{JiangLi2018}
L. Jiang and Q. Li,
Model reduction method using variable-separation for stochastic saddle point problems,
Journal of Computational Physics, 354 (2018), 43--66.

\bibitem{KammannTroltzschVolkwein2013}
E. Kammann, F. Tr{\"o}ltzsch and S. Volkwein,
A posteriori error estimation for semilinear parabolic optimal control problems with application to model reduction by POD,
ESAIM: Mathematical Modelling and Numerical Analysis, 47 (2013), 555--581.

\bibitem{KunischVolkwein2001}
K. Kunisch and S. Volkwein,
Galerkin proper orthogonal decomposition methods for parabolic problems,
Numerische Mathematik, 90 (2001), 117--148.

\bibitem{KunischVolkwein2002}
K. Kunisch and S. Volkwein,
Galerkin proper orthogonal decomposition methods for a general equation in fluid dynamics,
SIAM Journal on Numerical Analysis, 40 (2002), 492--515.

\bibitem{LiJiang2017}
Q. Li and L. Jiang,
A novel variable-separation method based on sparse and low rank representation for stochastic partial differential equations,
SIAM Journal on Scientific Computing, 39 (2017), A2879--A2910.

\bibitem{LiZhang2020}
Q. Li and P. Zhang,
A variable-separation method for nonlinear partial differential equations with random inputs,
SIAM Journal on Scientific Computing, 42 (2020), A723--A750.

\bibitem{MadayPateraTurinici2002}
Y. Maday, A. T. Patera and G. Turinici,
A priori convergence theory for reduced-basis approximations of single-parameter elliptic partial differential equations,
Journal of Scientific Computing, 17 (2002), 437--446.

\bibitem{Nouy2007}
A. Nouy,
A generalized spectral decomposition technique to solve a class of linear stochastic partial differential equations,
Computer Methods in Applied Mechanics and Engineering, 196 (2007), 4521--4537.

\bibitem{Nouy2010}
A. Nouy,
A priori model reduction through proper generalized decomposition for solving time-dependent partial differential equations,
Computer Methods in Applied Mechanics and Engineering, 199 (2010), 1603--1626.

\bibitem{Peterson1989}
J. S. Peterson,
The reduced basis method for incompressible viscous flow calculations,
SIAM Journal on Scientific and Statistical Computing, 10 (1989), 777--786.

\bibitem{Prudhomme2002}
C. Prud'homme, D. V. Rovas, K. Veroy, L. Machiels, Y. Maday, A. T. Patera and G. Turinici,
Reliable real-time solution of parametrized partial differential equations: Reduced-basis output bound methods,
Journal of Fluids Engineering, 124 (2002), 70--80.

\bibitem{Quarteroni2016}
A. Quarteroni, A. Manzoni and F. Negri,
Reduced Basis Methods for Partial Differential Equations: An Introduction,
Springer, Cham, 2016.

\bibitem{Rozza2008}
G. Rozza, D. B. P. Huynh and A. T. Patera,
Reduced basis approximation and a posteriori error estimation for affinely parametrized elliptic coercive partial differential equations,
Archives of Computational Methods in Engineering, 15 (2008), 229--275.

\bibitem{Sirovich1987}
L. Sirovich,
Turbulence and the dynamics of coherent structures. Part I: Coherent structures,
Quarterly of Applied Mathematics, 45 (1987), 561--571.

\bibitem{VeroyPatera2005}
K. Veroy and A. T. Patera,
Certified real-time solution of the parametrized steady incompressible Navier--Stokes equations: Rigorous reduced-basis a posteriori error bounds,
International Journal for Numerical Methods in Fluids, 47 (2005), 773--788.

\end{thebibliography}
\end{document}